\newcommand{\fn}[1]{\MakeUppercase{#1}} 
\renewcommand{\vec}[1]{\boldsymbol{#1}} 
\newcommand{\fvec}[1]{\vec{\fn{#1}}}
\newcommand{\vc}[1]{\underline{#1}{}}
\newcommand{\mat}[1]{\underline{\underline{#1}}_{}{}}
\newcommand{\avg}[1]{\{\!\{ #1 \}\!\}}
\newcommand{\etal}{\emph{et al.}\ }
\newcommand{\df}{\coloneqq}
\newcommand{\eqdf}{=\vcentcolon}
\newcommand{\T}{\mathrm{T}}
\newcommand{\nvolnodes}{N_q}
\newcommand{\nfacnodes}{N_{q_f}^{(\zeta)}}
\newcommand{\ncons}{N_c}
\newcommand{\nelem}{N_e}
\newcommand{\nfac}{N_f}
\newcommand{\npoly}{{N_p^*}}
\newcommand{\ngeom}{N_{p_g}^*}
\newtheorem{lemma}{Lemma}
\newtheorem{theorem}{Theorem}
\theoremstyle{definition}
\newtheorem{definition}{Definition}
\newtheorem{assumption}{Assumption}
\theoremstyle{remark}
\newtheorem{remark}{Remark}
\crefname{assumption}{Assumption}{Assumptions}
\journal{Journal of Computational Physics}
\def\ps@pprintTitle{%
\let\@oddhead\@empty
\let\@evenhead\@empty
\def\@oddfoot{\reset@font\hfil\thepage\hfil}
\let\@evenfoot\@oddfoot
}
\begin{document}

\begin{frontmatter}

\title{Efficient entropy-stable discontinuous spectral-element methods using tensor-product summation-by-parts operators on triangles and tetrahedra}
\author{\texorpdfstring{Tristan Montoya\corref{cor1}}{Tristan Montoya}}
\ead{tristan.montoya@mail.utoronto.ca}
\author{David W. Zingg}
\address{University of Toronto Institute for Aerospace Studies, 4925 Dufferin Street, Toronto, Ontario M3H 5T6, Canada}
\cortext[cor1]{Corresponding author, \href{https://orcid.org/0000-0002-4259-1449}{ORCiD: \texttt{0000-0002-4259-1449}}.}

\begin{abstract}
We present a new class of efficient and robust discontinuous spectral-element methods of arbitrary order for nonlinear hyperbolic systems of conservation laws on curved triangular and tetrahedral unstructured grids. Such discretizations employ a recently introduced family of sparse tensor-product summation-by-parts (SBP) operators in collapsed coordinates within an entropy-conservative modal formulation, which is rendered entropy stable when a dissipative numerical flux is used at element interfaces. The proposed algorithms exploit the structure of such SBP operators alongside that of the Proriol--Koornwinder--Dubiner polynomial basis used to represent the numerical solution on the reference triangle or tetrahedron, and a weight-adjusted approximation is employed in order to efficiently invert the local mass matrix for curvilinear elements. Using such techniques, we obtain an improvement in time complexity from $\mathcal{O}(p^{2d})$ to $\mathcal{O}(p^{d+1})$ relative to existing entropy-stable formulations using multidimensional SBP operators not possessing such a tensor-product structure, where $p$ is the polynomial degree of the approximation and $d$ is the number of spatial dimensions. The number of required entropy-conservative two-point flux evaluations between pairs of quadrature nodes is accordingly reduced by a factor ranging from 1.56 at $p=2$ to 4.57 at $p=10$ for triangles, and from 1.88 at $p=2$ to 10.99 at $p=10$ for tetrahedra. Through numerical experiments involving smooth solutions to the compressible Euler equations on isoparametric triangular and tetrahedral grids, the proposed methods using tensor-product SBP operators are shown to exhibit similar levels of accuracy for a given mesh and polynomial degree to those using multidimensional operators based on symmetric quadrature rules, with both approaches achieving order $p+1$ convergence with respect to the element size in the presence of interface dissipation as well as exponential convergence with respect to the polynomial degree. Furthermore, both operator families are shown to give rise to entropy-stable schemes which exhibit excellent robustness for test problems characteristic of under-resolved turbulence simulations. Such results suggest that the algorithmic advantages resulting from the use of tensor-product operators are obtained without compromising accuracy or robustness, enabling the efficient extension of the benefits of entropy stability to higher polynomial degrees than previously considered for triangular and tetrahedral elements.
\end{abstract}

\begin{keyword}
Entropy stability \sep summation-by-parts \sep discontinuous Galerkin \sep conservation laws
\MSC[2020] 65M12 \sep 65M60 \sep 65M70
\end{keyword}

\end{frontmatter}

\section{Introduction}\label{sec:intro}
Hyperbolic or advection-dominated nonlinear systems of conservation laws constitute a class of partial differential equations (PDEs) of considerable importance in numerous scientific and engineering disciplines, with applications ranging from aircraft design to climate modeling. As a result of their complex multiscale behaviour and propensity to develop discontinuities, such PDEs present a significant challenge in the design of efficient, automated, and robust numerical methods. High-order \emph{discontinuous spectral-element methods} (DSEMs)\footnote{In this work, we use the term \emph{spectral-element method} (SEM) to refer to any numerical method achieving high-order accuracy through the use of multiple degrees of freedom within a given element. Note that we do not necessarily require a collocated tensor-product formulation, as is sometimes implied by the use of such terminology.} have emerged as an attractive approach for this class of problem, having received considerable attention in recent years due to their performance on modern hardware (see, for example, Kl\"ockner \etal \cite{klockner_dg_gpu_09}, Abdi \etal \cite{abdi_gpu_cg_dg_17}, and Vermiere \etal \cite{vermeire_17}) resulting from their relatively high arithmetic intensity (i.e.\ the ratio of floating-point operations to memory accesses) and data locality. Moreover, such schemes are highly flexible in their support for local variation in element size as well as polynomial degree, facilitating the adaptive solution of complex problems in an efficient and automated manner, as exemplified in recent papers by Parsani \etal \cite{parsani_ssdc_21} and Mossier \etal \cite{mossier_p_adaptive_dg_22}.
\par
High-order methods such as DSEMs notoriously lack robustness when applied to strongly nonlinear problems, particularly in the presence of under-resolved scales or discontinuous solutions. While such issues are traditionally addressed through \emph{ad hoc} stabilization techniques such as over-integration, modal filtering, slope limiting, or numerical dissipation, modern \emph{entropy-stable} formulations enable rigorous proofs of stability (under certain physical admissibility criteria) by guaranteeing that a strictly convex function of the numerical solution remains bounded for all time. For example, in the context of fluid dynamics, an entropy-stable discretization can be constructed so as to satisfy the Second Law of Thermodynamics in a discrete sense, a property which bounds the growth of the numerical solution, provided that the pressure and density remain positive. Entropy-stable schemes were introduced for nonlinear hyperbolic systems of conservation laws by Tadmor \cite{tadmor_entropy_stable_fv_87}, who devised first-order and second-order methods using specially designed two-point fluxes which conserve a particular mathematical entropy function. LeFloch \etal \cite{lefloch_entropy_conservative_arbitrary_order_02} extended Tadmor's approach to high-order accuracy on periodic domains in one dimension. The modern era of high-order entropy-stable discretizations, however, began when Fisher \cite{fisher_phd_thesis_12} combined entropy-conservative two-point flux functions with summation-by-parts (SBP) operators, which are discrete differential operators mimetic of integration by parts on bounded domains (see, for example, the review papers by Sv\"ard and Nordstr\"om \cite{svard_nordstrom_sbpreview_14} and Del Rey Fern\'andez \etal \cite{delrey_sbp_sat_review_14}). Fisher's approach (see also Fisher and Carpenter \cite{fisher_carpenter_ssweno_sbp_13} and Fisher \etal \cite{fisher_carpenter_telescopingflux_conservation_13}) enabled the construction of entropy-stable high-order finite-difference methods for the compressible Euler and Navier--Stokes equations on curvilinear block-structured grids using affordable entropy-conservative two-point flux functions proposed by Ismail and Roe \cite{ismail_roe_ec_flux_09}. Such a combination of SBP operators with two-point flux functions came to be known in the entropy stability community as \emph{flux differencing} (not to be confused with the similarly named \emph{flux difference splitting} technique introduced by Roe \cite{roe81} decades earlier for approximate Riemann solvers). 
\par 
The extension of entropy stability to DSEM formulations was made possible through the work of Gassner \cite{gassner_dgsem_sbp_13}, who recognized that the matrix operators employed within discontinuous Galerkin (DG) methods based on collocated Legendre--Gauss--Lobatto quadrature were, in fact, SBP operators, an equivalence which he exploited in the construction of provably stable discretizations of Burgers' equation using split forms first proposed in the finite-difference community. Entropy-stable DSEMs for systems of conservation laws on tensor-product quadrilateral and hexahedral elements were then introduced by Carpenter \etal \cite{carpenter_entropystable_collocation_14} and Gassner \etal \cite{gassner_winters_kopriva_splitform_nodaldg_sbp_16}, with the latter demonstrating that a broad class of split-form and entropy-stable DSEMs could be recovered by choosing different two-point flux functions. These schemes are distinguished from the approach taken by Barth \cite{barth_numerical_methods_gasdynamic_systems_99} as well as Hiltebrand and Mishra \cite{hiltebrand_mishra_entropystable_dg_exact_int_14} based on the work of Hughes \etal \cite{hughes_franca_mallet_entropy_stable_fem_86}, wherein space-time DG schemes are formulated in terms of the entropy variables. Unlike flux-differencing approaches, the latter methodology results in discretizations which are only entropy stable under the assumption that all integrals in the corresponding variational formulation are evaluated exactly (which is impractical, if not impossible, for many PDEs of interest to practitioners, including the compressible Euler and Navier--Stokes equations) and, furthermore, cannot be formulated explicitly in time.
\par
Entropy-stable DSEMs were generalized beyond tensor-product quadrature rules on quadrilaterals and hexahedra through the use of \emph{multidimensional SBP operators}, which were introduced by Hicken \etal \cite{hicken_mdsbp_16} as an extension of the generalized SBP framework proposed in a one-dimensional setting by Del Rey Fern\'andez \etal \cite{delrey_generalized_framework_14}. The subsequent development of entropy-stable high-order methods on triangles and tetrahedra (which first appeared in papers by Chen and Shu \cite{chen_shu_entropy_stable_dgsbp_17}, Crean \etal \cite{crean_entropystable_sbp_euler_curved_17}, and Chan \cite{chan_discretely_entropy_conservative_dg_sbp_18}) is outlined in a review paper by Chen and Shu \cite{chen_shu_dgsbp_review_19}. While advantageous in terms of geometric flexibility, such triangular and tetrahedral SBP operators do not possess a tensor-product structure, and, as such, result in schemes which become considerably more costly than comparable tensor-product DSEMs on quadrilaterals and hexahedra as the polynomial degree is increased. Such a difference is due in part to the fact that tensor-product spectral-element operators are amenable to \emph{sum-factorization} techniques, which were proposed in the context of spectral methods by Orszag \cite{orszag_spectral_complex_geometries_80}. Sum factorization, which involves the application of tensor-product operators in a dimension-by-dimension manner, results in the number of floating-point operations required to evaluate spatial operators scaling asymptotically as $\mathcal{O}(p^{d+1})$ with the polynomial degree $p$, where $d$ denotes the number of spatial dimensions. This compares favourably to the $\mathcal{O}(p^{2d})$ complexity obtained using multidimensional operators without a tensor-product structure. In the context of an entropy-stable scheme, multidimensional SBP operators are further disadvantaged due to the fact that they require the evaluation of entropy-conservative two-point flux functions (which, in the case of the Euler and Navier--Stokes equations, are relatively expensive operations involving the logarithmic mean) between all pairs of quadrature nodes, rather than simply along lines of nodes as in a tensor-product formulation. Due in part to these limitations, as well as the difficulty in constructing suitable quadrature rules, entropy-stable discretizations based on multidimensional (i.e.\ non-tensor-product) SBP operators rarely employ polynomial degrees greater than four or five. 
\par
In a recent paper \cite{montoya_sem_23}, the present authors introduced tensor-product SBP operators on triangles and tetrahedra based on collapsed coordinate systems, which were used within skew-symmetric nodal and modal DSEM formulations to obtain energy-stable discretizations of the linear advection equation on curved elements. These contributions served to extend the SBP framework to tensor-product operators in collapsed coordinates, which had previously been recognized in the SEM community (see, for example, Sherwin and Karniadakis \cite{sherwin_karniadakis_triangular_sem_95,sherwin_karniadakis_tetrahedra_hp_fem_96}, Lomtev and Karniadakis \cite{lomtev_karniadakis_dg_99}, Kirby \etal \cite{kirby_spectral_hp_dg_hybrid_grids_00}, and Moxey \etal \cite{moxey_matrix_free_triangles_20}) to be crucial for obtaining efficient discretizations on triangles and tetrahedra (as well as prisms and pyramids) at high polynomial degrees. Specifically, it was shown in \cite{montoya_sem_23} that by exploiting the tensor-product structure of the proposed SBP operators, and, in the case of the modal approach, making use of the ``warped'' tensor-product structure of the Proriol--Koornwinder--Dubiner (PKD) orthogonal polynomial basis \cite{proriol_polynomials_57,koornwinder_orthogonal_polynomials_75,dubiner_spectral_triangle_91} alongside a weight-adjusted approximation  proposed by Chan \etal \cite{chan_weight_adjusted_dg_curvilinear_17} to invert the curvilinear mass matrix, the time derivative could be obtained in $\mathcal{O}(p^{d+1})$ floating-point operations (i.e.\ the same asymptotic complexity as for the DSEMs on quadrilaterals and hexahedra described above) through sum factorization. Moreover, the modal formulations were shown to be similar in accuracy and in spectral radius (which, for linear problems, dictates the maximum stable time step for explicit temporal integration) to those using multidimensional operators on triangles and tetrahedra, while requiring far fewer floating-point operations at higher polynomial degrees. 
\par 
Building upon the above contributions, the primary objective of this paper is to apply the tensor-product SBP operators on triangles and tetrahedra introduced in \cite{montoya_sem_23} to the construction of efficient entropy-stable DSEMs of arbitrary order for nonlinear hyperbolic systems of conservation laws. The proposed methods make use of a modal PKD basis and a weight-adjusted approximation of the mass matrix inverse within an entropy-stable flux-differencing formulation, in which we exploit operator sparsity as well as sum factorization in order to obtain $\mathcal{O}(p^{d+1})$ complexity for all local elemental operations. In the tetrahedral case, we also approximate the metric terms arising from the mapping from reference to physical coordinates using a curl formulation from Chan and Wilcox \cite{chan_wilcox_entropystable_curvilinear_19} to satisfy the discrete metric identities, which must hold in order to ensure free-stream preservation and entropy stability on curvilinear meshes. These contributions enable the efficient extension of entropy-stable schemes on triangles and tetrahedra to polynomial degrees beyond those typically considered for such element types.
\par
We now outline the structure of the remainder of this paper. In \cref{sec:preliminaries}, we introduce some relevant notation and review some fundamental concepts relating to hyperbolic systems of conservation laws and SBP operators as well as classical orthogonal polynomials and Gaussian quadrature rules. In \cref{sec:tensor_sbp}, we review the relevant aspects of  the approach introduced in \cite{montoya_sem_23} for the construction of tensor-product SBP operators on triangles and tetrahedra. \Cref{sec:methods} describes a general framework for the construction of entropy-stable DSEMs using tensor-product or multidimensional SBP operators on curved triangular or tetrahedral elements, while \cref{sec:implementation} describes algorithmic considerations for the efficient implementation of the proposed schemes. The methods are then analyzed with respect to their conservation, free-stream preservation, and entropy stability properties in \cref{sec:analysis}. Numerical studies of accuracy as well as robustness for the compressible Euler equations are presented in \cref{sec:numerical}, and conclusions are provided in \cref{sec:conclusions}.

\section{Preliminaries}\label{sec:preliminaries}
\subsection{Notation} 
The notation in this paper follows that introduced by the authors in \cite{montoya_tensor_product_22}, \cite{montoya_unifying_21}, and \cite{montoya_sem_23}. Single underlines are used to denote vectors (treated as column matrices), whereas double underlines denote matrices. Symbols in bold are used specifically to denote Cartesian (i.e.\ spatial) vectors, for which we employ the usual dot product $\vec{x} \cdot \vec{y} \df x_1y_1 + \cdots + x_dy_d$ and Euclidean norm $\lVert \vec{x} \rVert^2 \df \vec{x} \cdot \vec{x}$. The symbol $\nabla$ is used to denote componentwise differentiation with respect to either type of vector, for example, as $\nabla_{\vec{x}} \df [\partial/\partial_{x_1}, \ldots, \partial/\partial_{x_d}]^\T$ or $\nabla_{\vc{U}} \df [\partial/\partial_{U_1}, \ldots, \partial/\partial_{U_d}]^\T$. The symbols $\mathbb{R}$, $\mathbb{R}^+$, $\mathbb{R}_0^+$, $\mathbb{N}$, $\mathbb{N}_0$, and $\mathbb{S}^{d-1}$ denote the real numbers, the positive real numbers, the non-negative real numbers, the natural numbers (excluding zero), the natural numbers including zero, and the unit $(d-1)$-sphere $\mathbb{S}^{d-1} \df \{ \vec{x} \in \mathbb{R}^d : \lVert \vec{x} \rVert = 1\}$, respectively. The symbols $\vc{0}^{(N)}$ and $\vc{1}^{(N)}$ are reserved for vectors of length $N \in \mathbb{N}$ containing all zeros and all ones, respectively, where the superscript is omitted when clear from the context, and the symbol $\mat{0}^{(M \times N)}$ likewise denotes an $M$ by $N$ matrix of zeros. The notation $\{1:N\}$ is used as shorthand for the index set $\{1,2,\ldots,N\}$. Given an arbitrary bounded domain $\mathcal{D} \subset \mathbb{R}^d$, we use $\partial\mathcal{D}$ to denote its boundary and $\bar{\mathcal{D}} \df \mathcal{D} \cup \partial \mathcal{D}$ to denote its closure. The space of polynomials of maximum total degree $p \in \mathbb{N}_0$ on $\mathcal{D}$ is then defined as $\mathbb{P}_p(\mathcal{D}) \df \operatorname{span}\{\mathcal{D} \ni \vec{x} \mapsto x_1^{\alpha_1} \cdots x_d^{\alpha_d} : \vec{\alpha} \in \mathcal{N}(p)\}$ in terms of the multi-index set $\mathcal{N}(p) \df \{\vec{\alpha} \in \mathbb{N}_0^d : \alpha_1 + \cdots + \alpha_d \leq p\}$ of cardinality $\npoly \df \binom{p+d}{d}$. Other relevant notational conventions and definitions are introduced as they appear. 

\subsection{Systems of conservation laws and the entropy inequality}
We are interested in systems of conservation laws governing the evolution of $\ncons \in \mathbb{N}$ variables $\vc{\fn{u}}(\vec{x},t) \in \Upsilon \subset \mathbb{R}^{\ncons}$  on the domain $\Omega \subset \mathbb{R}^d$ over the time interval $(0,T) \subset \mathbb{R}_0^+$, taking the form
\begin{subequations}\label{eq:cons_law}
\begin{alignat}{2}
\frac{\partial\vc{\fn{u}}(\vec{x},t)}{\partial t} + \sum_{m=1}^d \frac{\partial\vc{\fn{f}}_m(\vc{\fn{u}}(\vec{x},t))}{\partial x_m}&= \vc{0}, \qquad && \forall \, (\vec{x},t) \in \Omega \times (0,T),\label{eq:pde}\\
\fn{\vc{u}}(\vec{x},0) &= \fn{\vc{u}}^0(\vec{x}), \qquad && \forall \, \vec{x} \in \Omega,\label{eq:ic}
\end{alignat}
\end{subequations}
subject to appropriate boundary conditions, where $\vc{\fn{f}}_m(\vc{\fn{u}}(\vec{x},t)) \in \mathbb{R}^{\ncons}$ is the $m^{\mathrm{th}}$ Cartesian flux component and $\vc{\fn{u}}^0(\vec{x}) \in \Upsilon$ represents the initial data, where $\Upsilon$ denotes the set of admissible solution states. For convenience, we define the flux in any direction $\vec{n} \in \mathbb{S}^{d-1}$ for an arbitrary state $\vc{U} \in \Upsilon$ as 
\begin{equation}
\vc{\fn{f}}(\vc{U},\vec{n}) \df \sum_{m=1}^d n_m \vc{\fn{F}}_m(\vc{U}).
\end{equation}
A system of conservation laws in the form of \eqref{eq:cons_law} is then called \emph{hyperbolic} if the flux Jacobian $\nabla_{\vc{U}}\vc{\fn{f}}(\vc{U},\vec{n}) \in \mathbb{R}^{\ncons \times \ncons}$ is diagonalizable with all real eigenvalues for all $\vc{U} \in \Upsilon$ and $\vec{n} \in \mathbb{S}^{d-1}$. 

\begin{remark}
In this work, we tacitly assume that numerical solutions remain within the admissible set $\Upsilon$, corresponding, for example, to the requirement for thermodynamic quantities such as pressure or density to be positive. This is difficult to ensure \emph{a priori} for nonlinear problems, and often requires the use of bespoke limiting procedures, which are described in the context of entropy-stable schemes, for example, in recent papers by Rueda-Ram\'irez \etal \cite{rueda_ramirez_subcell_limiting_22}, Yamaleev and Upperman \cite{yamaleev_positivity_entropy_stable_23}, and Lin \etal \cite{lin_positivity_preservation_euler_23}.
\end{remark}
We are specifically interested in hyperbolic systems of conservation laws endowed with an \emph{entropy function} and corresponding \emph{entropy flux}, which are defined as follows. 
\begin{definition}\label{def:entropy}
The functions $\mathcal{S} : \Upsilon \to \mathbb{R}$ and $\vec{\mathcal{F}} : \Upsilon \to \mathbb{R}^d$ are, respectively, an \emph{entropy function} and \emph{entropy flux} if $\mathcal{S}$ is strictly convex (i.e.\ its Hessian is positive definite), and the relation
\begin{equation}\label{eq:entropy_contract}
\big(\nabla_{\vc{U}} \mathcal{F}_m(\vc{U})\big)^\T = \big(\nabla_{\vc{U}}\mathcal{S}(\vc{U})\big)^\T \big(\nabla_{\vc{U}} \vc{\fn{f}}_m(\vc{U})\big), \quad \forall \, \vc{U} \in \Upsilon,
\end{equation}
holds for all $m \in \{1:d\}$, where $\nabla_{\vc{U}} \mathcal{S}(\vc{U}), \nabla_{\vc{U}} \mathcal{F}_m(\vc{U}) \in \mathbb{R}^{\ncons}$ are the gradients of the entropy and entropy flux components with respect to the conservative variables, respectively, and $\nabla_{\vc{U}} \vc{\fn{f}}_m(\vc{U}) \in \mathbb{R}^{\ncons \times \ncons}$ denotes the Jacobian of the $m^{\mathrm{th}}$ flux component with respect to the conservative variables. 
\end{definition}
The entries of the vector $\vc{\mathcal{W}}(\vc{U}) \df \nabla_{\vc{U}}\mathcal{S}(\vc{U})$ are referred to as the \emph{entropy variables}, where the mapping $\vc{\mathcal{W}}$ has an inverse given by $\vc{\mathcal{U}}$ due to the strict convexity of $\mathcal{S}$ over the admissible set $\Upsilon$. As described by Friedrichs and Lax \cite{friedrichs_lax_systems_conservation_71}, the existence of an entropy--entropy flux pair in the sense of \cref{def:entropy} implies that any classical (i.e.\ continuously differentiable) solution to \eqref{eq:cons_law} satisfies an additional conservation equation of the form
\begin{equation}\label{eq:entropy_diff}
\frac{\partial \fn{\mathcal{S}}(\vc{\fn{u}}(\vec{x},t))}{\partial t} + \nabla_{\vec{x}} \cdot \vec{\mathcal{F}}(\vc{\fn{U}}(\vec{x},t)) = 0, \quad \forall \, (\vec{x},t) \in \Omega \times (0,T).
\end{equation}
Integrating \eqref{eq:entropy_diff} over the spatial domain and using the divergence theorem then results in 
\begin{equation}\label{eq:entropy_equality}
\begin{aligned}
\frac{\dd}{\dd t} \int_{\Omega} \mathcal{S}(\vc{\fn{U}}(\vec{x},t)) \, \dd \vec{x} &= -\int_{\partial\Omega} \vec{\mathcal{F}}(\vc{\fn{U}}(\vec{x},t)) \cdot \vec{n}(\vec{x}) \, \dd s\\
&= \int_{\partial\Omega} \Big(\vec{\varPsi}\big(\vc{\mathcal{W}}(\vc{\fn{U}}(\vec{x},t))\big) \cdot \vec{n}(\vec{x}) - \big(\vc{\mathcal{W}}(\vc{\fn{U}}(\vec{x},t))\big)^\T\vc{\fn{f}}(\vc{\fn{U}}(\vec{x},t),\vec{n}(\vec{x}))\Big) \, \dd s,
\end{aligned}
\end{equation}
where $\vec{n}(\vec{x}) \in \mathbb{S}^{d-1}$ denotes the outward unit normal to $\partial\Omega$, and we have defined the \emph{flux potential} $\vec{\varPsi}(\vc{W}) \in \mathbb{R}^d$ such that $\varPsi_m(\vc{W}) \df \vc{W}^\T \vc{\fn{f}}_m(\vc{\mathcal{U}}(\vc{W})) - \mathcal{F}_m(\vc{\mathcal{U}}(\vc{W}))$. We are, however, often interested in \emph{weak solutions}, which satisfy \eqref{eq:cons_law} in the sense of distributions and can therefore be discontinuous, describing phenomena such as shock waves. Replacing \eqref{eq:entropy_equality} with an \emph{entropy inequality} of the form
\begin{equation}\label{eq:entropy_balance}
\frac{\dd}{\dd t} \int_{\Omega} \mathcal{S}(\vc{\fn{U}}(\vec{x},t)) \, \dd \vec{x} \leq  \int_{\partial\Omega} \Big(\vec{\varPsi}\big(\vc{\mathcal{W}}(\vc{\fn{U}}(\vec{x},t))\big) \cdot \vec{n}(\vec{x}) - \big(\vc{\mathcal{W}}(\vc{\fn{U}}(\vec{x},t))\big)^\T\vc{\fn{f}}(\vc{\fn{U}}(\vec{x},t),\vec{n}(\vec{x}))\Big) \, \dd s
\end{equation}
then provides an admissibility criterion for physically relevant weak solutions (see, for example, Kru{\v{z}}kov \cite{kruzhkov_first_order_quasilinear_70} or Lax \cite{lax_shock_waves_and_entropy_71}). Provided that $\vc{\fn{U}}(\vec{x},t)$ remains within $\Upsilon$ and that the boundary conditions are imposed correctly, it can be shown (see, for example, Dafermos \cite{dafermos16}) that \eqref{eq:entropy_balance} implies a bound on the solution itself due to the strict convexity of the entropy function. We are therefore interested in constructing discretizations which respect a semi-discrete form of such an entropy bound.
\begin{remark}
The requirement for a strictly convex entropy function in \cref{def:entropy} is consistent with the mathematical literature, but is opposite the convention used in physics. As such, the inequality in \eqref{eq:entropy_balance} is a generalized statement of the Second Law of Thermodynamics, up to a change in sign.
\end{remark}

\subsection{Summation-by-parts operators}\label{sec:sbp}
The discretizations described in this paper involve the construction of SBP operators on a canonical reference element, which are then used to discretize the PDE on an unstructured mesh through the use of a bijective mapping from the reference element to each physical element. We now require the following definition of a nodal SBP operator from \cite{hicken_mdsbp_16}, which extends the generalized definition proposed in \cite{delrey_generalized_framework_14} to the multidimensional setting.
\begin{definition}\label{def:sbp}
Let $\hat{\Omega} \subset \mathbb{R}^d$ denote a closed, bounded, and connected reference domain on which we define a set of $\nvolnodes \in \mathbb{N}$ distinct nodes $\{\vec{\xi}^{(i)}\}_{i\in \{1:\nvolnodes\}} \subset \hat{\Omega}$, and let the vectors
\begin{equation}\label{eq:nodal_vectors}
\vc{u} \df \big[\fn{u}(\vec{\xi}^{(1)}),\ldots, \fn{u}(\vec{\xi}^{(\nvolnodes)})\big]^\T \quad \text{and} \quad \vc{v} \df \big[\fn{v}(\vec{\xi}^{(1)}),\ldots, \fn{v}(\vec{\xi}^{(\nvolnodes)})\big]^\T
\end{equation}
contain the nodal values of arbitrary functions $\fn{u},\fn{v} : \hat{\Omega} \to \mathbb{R}$. A matrix $\mat{D}^{(m)} \in \mathbb{R}^{\nvolnodes \times \nvolnodes}$ approximating the partial derivative $\partial/\partial \xi_m$ is then a \emph{nodal SBP operator} of degree $q \in \mathbb{N}_0$ if it satisfies
\begin{equation}\label{eq:accuracy}
\mat{D}^{(m)}\vc{v} = \big[(\partial\fn{v}/\partial\xi_m)(\vec{\xi}^{(1)}), \ldots, (\partial\fn{v}/\partial\xi_m)(\vec{\xi}^{(\nvolnodes)}) \big]^\T, \quad \forall \, \fn{v} \in \mathbb{P}_p(\hat{\Omega}),
\end{equation}
and may be decomposed as $\mat{D}^{(m)} = \mat{W}^{-1}\mat{Q}^{(m)}$ such that $\mat{W}\in \mathbb{R}^{\nvolnodes\times \nvolnodes}$ is symmetric positive-definite (SPD) and $\mat{Q}^{(m)}\in \mathbb{R}^{\nvolnodes\times \nvolnodes}$ satisfies the \emph{SBP property} given by $
\mat{Q}^{(m)} + (\mat{Q}^{(m)})^\T = \mat{E}^{(m)}$, where
\begin{equation}\label{eq:facet_accuracy}
\vc{u}^\T\mat{E}^{(m)}\vc{v} = \int_{\partial\hat{\Omega}} \fn{u}(\vec{\xi})\fn{v}(\vec{\xi}) \hat{n}_m(\vec{\xi}) \, \dd\hat{s}, \quad \forall \, \fn{u},\fn{v} \in \mathbb{P}_r(\hat{\Omega}),
\end{equation}
holds for some $r \geq q$, and $\hat{\vec{n}} : \partial\hat{\Omega} \to \mathbb{S}^{d-1}$ denotes the outward unit normal vector to $\hat{\Omega}$.
\end{definition}
The SBP property is equivalent to the decomposition $\mat{Q}^{(m)} = \mat{S}^{(m)} + \frac{1}{2}\mat{E}^{(m)}$, where $\mat{S}^{(m)}$ is skew-symmetric and $\mat{E}^{(m)}$ is a symmetric matrix satisfying the conditions in \eqref{eq:facet_accuracy}. Moreover, SBP operators mimic integration by parts in a discrete sense, as given by
\begin{equation}
\begin{alignedat}{3}
\underset{\rotatebox{90}{$\, \approx $}}{\int_{\hat{\Omega}} \fn{u}(\vec{\xi}) \frac{\partial\fn{v}(\vec{\xi})}{\partial\xi_m} \, \dd \vec{\xi} }  &+ \underset{\rotatebox{90}{$\, \approx $}}{\int_{\hat{\Omega}}\frac{\partial\fn{u}(\vec{\xi})}{\partial\xi_m} \fn{v}(\vec{\xi})  \, \dd \vec{\xi}} & = & \  \underset{\rotatebox{90}{$\, \approx $}}{ \int_{\partial\hat{\Omega}} \fn{u}(\vec{\xi})\fn{v}(\vec{\xi}) \hat{n}_m(\vec{\xi}) \, \dd\hat{s}}. \\ 
\vc{u}^\T\mat{Q}^{(m)}\vc{v} \qquad &+ \quad \vc{u}^\T\big(\mat{Q}^{(m)}\big)^\T\vc{v} & \ = & \qquad\quad \vc{u}^\T\mat{E}^{(m)}\vc{v}
\end{alignedat}
\end{equation}
We refer to any SBP operator for which the associated SPD matrix $\mat{W}$ is diagonal as a \emph{diagonal-norm} SBP operator. In such a case, it was shown in \cite[Theorem 3.2]{hicken_mdsbp_16} that the diagonal entries of $\mat{W}$ constitute the weights $\{\omega^{(i)}\}_{i \in \{1:\nvolnodes\}} \subset \mathbb{R}^+$ for a quadrature rule satisfying
\begin{equation}\label{eq:sbp_quadrature}
\sum_{i=1}^{\nvolnodes} \fn{v}(\vec{\xi}^{(i)})\, \omega^{(i)} = \int_{\hat{\Omega}} \fn{v}(\vec{\xi})\, \dd \vec{\xi}, \quad \forall \, \fn{v} \in \mathbb{P}_{\tau}(\hat{\Omega}),
\end{equation}
which is of degree $\tau \geq 2q - 1$ for any diagonal-norm SBP operator of degree $q$. 
\begin{remark}
The polynomial exactness condition in \eqref{eq:accuracy} implies that any SBP operator of degree $q$ is, by definition, also an SBP operator of any positive integer less than $q$. Where such ambiguity arises, the degree $q$ of an SBP operator is taken to refer uniquely to the \emph{maximum} integer value for which \eqref{eq:accuracy} holds, and we adopt an analogous convention for quadrature rules such as that in \eqref{eq:sbp_quadrature}. 
\end{remark}
\begin{remark}
Although a multidimensional discretization requires an SBP operator of the form $\mat{D}^{(m)} = \mat{W}^{-1}\mat{Q}^{(m)}$ to be constructed for each partial derivative $\partial / \partial x_m$, it will be assumed throughout this work, as is typical in the SBP literature, that $\mat{W}$ is the same for all coordinate indices $m \in \{1:d\}$.
\end{remark}

\subsection{Decomposition of the boundary operators}\label{sec:sbp_boundary}
Let us now assume that the reference element $\hat{\Omega} \subset \mathbb{R}^d$ is a polytope, corresponding to a polygon in two dimensions or a polyhedron in three dimensions, and partition its boundary into $\nfac \in \mathbb{N}$ closed subsets $\{ \hat{\Gamma}^{(\zeta)}\}_{\zeta \in \{1:\nfac\}}$ with disjoint interiors, which we denote as \emph{facets}. Each facet $\hat{\Gamma}^{(\zeta)} \subset \partial\hat{\Omega}$ is assumed to be flat, and we denote its (constant) outward unit normal vector by $\vec{n}^{(\zeta)} \in \mathbb{S}^{d-1}$. On each facet, we then introduce $\nfacnodes \in \mathbb{N}$ quadrature nodes and weights given, respectively, by
\begin{equation}\label{eq:facet_quad}
\{\vec{\xi}^{(\zeta,i)}\}_{i \in \{1:\nfacnodes\}} \subset \hat{\Gamma}^{(\zeta)}, \quad  \{\omega^{(\zeta,i)}\}_{i \in \{1:\nfacnodes\}} \subset \mathbb{R}_0^+.
\end{equation}
As in Del Rey Fern\'andez \etal \cite[Section 3]{delrey_mdsbp_sat_18}, we restrict our attention to the class of SBP operators for which the boundary matrices in \eqref{eq:facet_accuracy} can be constructed as
\begin{equation}\label{eq:e_decomp}
\mat{E}^{(m)} \df \sum_{\zeta=1}^{\nfac} \hat{n}_m^{(\zeta)} \big(\mat{R}^{(\zeta)}\big)^\T \mat{B}^{(\zeta)}\mat{R}^{(\zeta)},
\end{equation}
where $\mat{B}^{(\zeta)} \in \mathbb{R}^{\nfacnodes\times\nfacnodes}$ is a diagonal matrix with entries $B_{ij}^{(\zeta)} \df \omega^{(\zeta,i)}\delta_{ij}$, and $\mat{R}^{(\zeta)} \in \mathbb{R}^{\nfacnodes \times \nvolnodes}$ is an interpolation/extrapolation operator of degree $r^{(\zeta)} \geq p$, satisfying the accuracy conditions
\begin{equation}\label{eq:extrap_acc}
\mat{R}^{(\zeta)}\vc{v} = \big[\fn{v}(\vec{\xi}^{(\zeta,1)}), \ldots, \fn{v}(\vec{\xi}^{(\zeta,\nfacnodes)})\big]^\T, \quad \forall \, \fn{v} \in \mathbb{P}_{r^{(\zeta)}}(\hat{\Omega}).
\end{equation}
As a special case of the decomposition in \eqref{eq:e_decomp}, we note that $\mat{E}^{(m)}$ can be made diagonal by constructing SBP operators for which the nodal sets in \eqref{eq:facet_quad} all form subsets of the volume quadrature nodes, wherein $\mat{R}^{(\zeta)}$ simply selects boundary values from nodal vectors such as those in \eqref{eq:nodal_vectors}. Such SBP operators, denoted here as \emph{diagonal-E} operators, are analogous to those on one-dimensional nodal sets including both endpoints, and were first constructed on triangles by Chen and Shu \cite{chen_shu_entropy_stable_dgsbp_17}.

\subsection{Orthogonal polynomials and Gaussian quadrature rules}\label{sec:poly_quad}
The fundamental building blocks used for constructing the triangular and tetrahedral SBP operators developed by the authors in \cite{montoya_sem_23} are Jacobi and Legendre polynomials as well as their associated Gaussian quadrature rules and interpolants, the basic properties of which we will review here. The normalized \emph{Jacobi polynomials} are denoted by $\fn{P}_i^{(a,b)} \in \mathbb{P}_i([-1,1])$, satisfying
\begin{equation}
\int_{-1}^1 P_i^{(a,b)}(\eta) P_j^{(a,b)}(\eta)(1-\eta)^{a} (1+\eta)^{b} \, \dd \eta = \delta_{ij}, \quad \forall \, a,b > -1.
\end{equation}
Such polynomials can be constructed through recurrence relations, as shown, for example, by Hesthaven and Warburton \cite[Appendix A]{hesthaven08}. For a given non-negative integer $q$, the \emph{Gaussian quadrature rules} corresponding to a Jacobi weight with exponents $a$ and $b$ have nodes $\{\eta_{q,i}^{(a,b)}\}_{i\in\{0:q\}} \subset [-1,1]$ given by the $q+1$ solutions to a polynomial equation. Such equations are given by
\begin{subequations}
\begin{alignat}{2}
\text{Gauss:}& \quad &\fn{P}_{q+1}^{(a,b)}(\eta) &= 0, \\
\text{Gauss--Radau:} &\quad  &(1+\eta)\fn{P}_q^{(a,b+1)}(\eta) &= 0, \label{eq:gauss_radau}\\
\text{Gauss--Lobatto:}&\quad &(1-\eta^2)\fn{P}_{q-1}^{(a+1,b+1)}(\eta) &= 0,
\end{alignat}
\end{subequations}
where the Gauss, Gauss--Radau, and Gauss--Lobatto families of quadrature rules include zero, one, and two endpoints of the interval, respectively.\footnote{Here, we define Gauss--Radau quadrature rules including a node at the left endpoint. By flipping the sign of $\eta$ in \eqref{eq:gauss_radau}, we obtain a rule including the right endpoint instead. We also note that Gauss--Lobatto rules require $q \geq 1$.} The \emph{Lagrange polynomials} $\{\ell_{q,i}^{(a,b)}\}_{i\in\{0:q\}}$ associated with such a nodal set constitute a basis for $\mathbb{P}_q([-1,1])$ satisfying $\ell_{q,i}^{(a,b)}(\eta_{q,j}^{(a,b)}) = \delta_{ij}$ and are given by 
\begin{equation}\label{eq:lagrange}
\ell_{q,i}^{(a,b)}(\eta) \df \prod_{j \in \{0:q\} \setminus \{i\}} \frac{\eta - \eta_{q,j}^{(a,b)}}{\eta_{q,i}^{(a,b)}-\eta_{q,j}^{(a,b)}}.
\end{equation}
The corresponding Gaussian quadrature weights $\{\omega_{q,i}^{(a,b)}\}_{i\in\{0:q\}}$ can then be expressed as
\begin{equation}
\omega_{q,i}^{(a,b)} \df \int_{-1}^1 \ell_{q,i}^{(a,b)}(\eta)  (1-\eta)^{a} (1+\eta)^{b} \, \dd \eta,
\end{equation}
where alternative expressions for such weights can be found, for example, in references such as Karniadakis and Sherwin \cite[Appendix B]{karniadakis_sherwin_spectral_hp_element}. The resulting quadrature rule satisfies
\begin{equation}\label{eq:quad_1d}
\sum_{i=0}^{q} \fn{v}(\eta_{q,i}^{(a,b)})\, \omega_{q,i}^{(a,b)} = \int_{-1}^1 \fn{v}(\eta) (1-\eta)^{a} (1+\eta)^{b}\, \dd \eta, \quad \forall \, \fn{v} \in \mathbb{P}_{2q+\delta}([-1,1]),
\end{equation} 
where $\delta = 1$ for Gauss nodes, $\delta = 0$ for Gauss--Radau nodes, and $\delta = -1$ for Lobatto nodes. In the special case of $a = b = 0$, we recover the familiar \emph{Legendre polynomials} and the \emph{Legendre--Gauss} (LG), \emph{Legendre--Gauss--Radau} (LGR), \emph{Legendre--Gauss--Lobatto} (LGL) quadrature rules.

\section{Tensor-product summation-by-parts operators in collapsed coordinates}\label{sec:tensor_sbp}
In an earlier paper \cite{montoya_sem_23}, the present authors proposed a procedure for constructing SBP operators of arbitrary degree on triangles and tetrahedra, which, unlike those developed previously, are sparse and possess a tensor-product structure, allowing for the application of sum-factorization techniques. In this section, we briefly review the essential steps involved in constructing such operators, where we refer to the above paper for details regarding the construction of SBP operators and more broadly to Karniadakis and Sherwin \cite{karniadakis_sherwin_spectral_hp_element} regarding SEM formulations in collapsed coordinates.

\subsection{Tensor-product summation-by-parts operators on the reference triangle}
The reference element on which we construct our triangular SBP operators is taken to be
\begin{equation}\label{eq:reference_triangle}
\hat{\Omega} \df \big\{\vec{\xi} \in [-1,1]^2 : \xi_1 + \xi_2 \leq 0  \big\},
\end{equation}
where, as a convention, we number the facets (i.e.\ edges of the triangle) counter-clockwise as
\begin{equation}\label{eq:tri_facets}
\begin{gathered}
\hat{\Gamma}^{(1)} \df \big\{ \vec{\xi} \in \hat{\Omega} : \xi_2 = -1 \big\}, \quad \hat{\Gamma}^{(2)} \df \big\{ \vec{\xi} \in \hat{\Omega} : \xi_1 + \xi_2 = 0 \big\}, \quad
\hat{\Gamma}^{(3)} \df \big\{ \vec{\xi} \in \hat{\Omega} : \xi_1 = -1 \big\}.
\end{gathered}
\end{equation}
The collapsed coordinate transformation from the square to the reference triangle is then given by
\begin{equation}\label{eq:tri_mapping}
\vec{\chi}(\vec{\eta}) \df  \mqty[\tfrac{1}{2}(1+\eta_1)(1-\eta_2) - 1,\\ \eta_2],
\end{equation}
which is illustrated in \cref{fig:mapping_2d}. 
\begin{figure}[!t]
\centering
\begin{subfigure}[c]{0.35\textwidth}
\centering
\includegraphics[height=42mm]{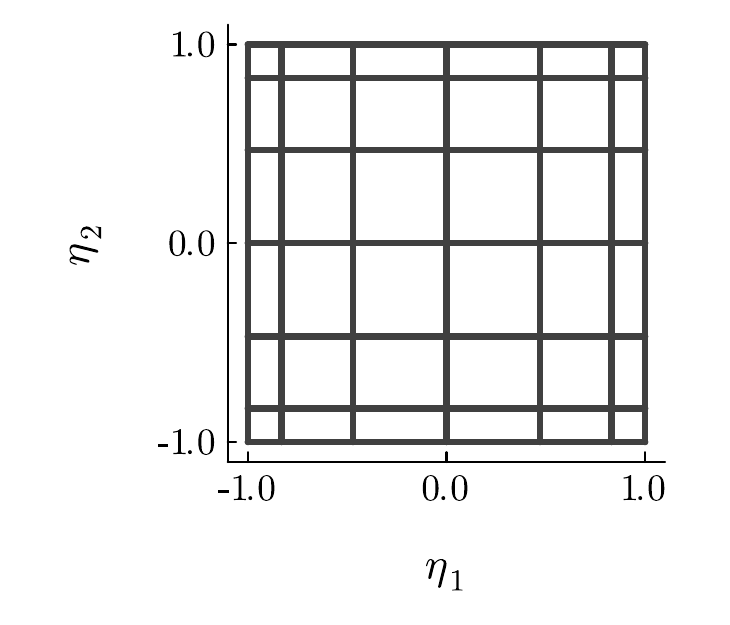}
\end{subfigure}~
\begin{subfigure}[c]{0.15\textwidth}
\centering
\Large $\longrightarrow$\\
$\vec{\chi}$
\end{subfigure}~
\begin{subfigure}[c]{0.35\textwidth}
\centering
\includegraphics[height=42mm]{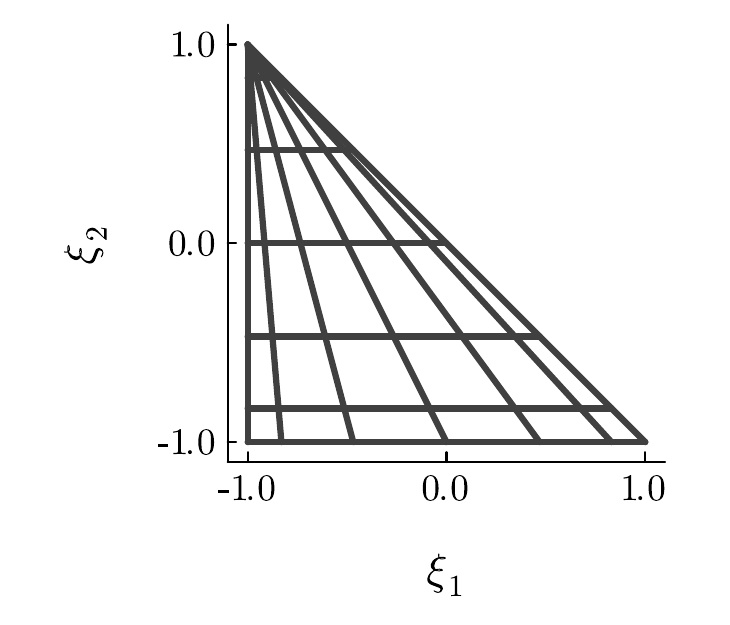}
\end{subfigure}
\caption{Illustration of the collapsed coordinate transformation $\vec{\xi} = \vec{\chi}(\vec{\eta})$ from the square to the reference triangle}\label{fig:mapping_2d}
\end{figure}
Letting $q_m \in \mathbb{N}_0$ denote the degree of the approximation in the $\eta_m$ coordinate, tensor-product quadrature rules are obtained with nodes $\{\vec{\xi}^{(i)}\}_{i\in \{1:\nvolnodes\}} \subset \hat{\Omega}$ and corresponding weights $\{\omega^{(i)}\}_{i\in \{1:\nvolnodes\}}\subset \mathbb{R}^+$ given for a multi-index $\vec{\alpha} \in \{0:q_1\} \times \{0:q_2\}$ as
\begin{equation}\label{eq:tri_quadrature}
\vec{\xi}^{(\sigma(\vec{\alpha}))} \df \vec{\chi}\big(\eta_{q_1,\alpha_1}^{(0,0)},\eta_{q_2,\alpha_2}^{(0,0)}\big), \quad \omega^{(\sigma(\vec{\alpha}))} \df \frac{1-\eta_{q_2,\alpha_2}^{(0,0)}}{2}\omega_{q_1,\alpha_1}^{(0,0)}\omega_{q_2,\alpha_2}^{(0,0)},
\end{equation}
where $\sigma : \{0:q_1\} \times \{0:q_2\} \to \{1:\nvolnodes\}$ is a bijective mapping which defines an ordering of the $\nvolnodes \df (q_1+1)(q_2+1)$ volume quadrature nodes, and we use the notation introduced in \cref{sec:poly_quad} for Gaussian quadrature rules. Similarly, we let $q_f \in \mathbb{N}_0$ and define the $\nfacnodes \df q_f + 1$ facet quadrature nodes and weights in \eqref{eq:facet_quad} as
\begin{equation}\label{eq:tri_facet_quadrature_nodes}
\begin{alignedat}{3}
\vec{\xi}^{(1,i)} &\df \vec{\chi}(\eta_{q_f,i-1}^{(0,0)},-1), \quad &\vec{\xi}^{(2,i)} &\df \vec{\chi}(1,\eta_{q_f,i-1}^{(0,0)}), \quad &\vec{\xi}^{(3,i)} &\df \vec{\chi}(-1,\eta_{q_f,i-1}^{(0,0)}),\\
\omega^{(1,i)} &\df \omega_{q_f,i-1}^{(0,0)}, \quad &\omega^{(2,i)} &\df \sqrt{2} \omega_{q_f,i-1}^{(0,0)}, \quad &\omega^{(3,i)} &\df  \omega_{q_f,i-1}^{(0,0)}.
\end{alignedat}
\end{equation}
Defining the volume and facet quadrature weight matrices $\mat{W} \in \mathbb{R}^{\nvolnodes \times \nvolnodes}$ and $\mat{B}^{(\zeta)} \in \mathbb{R}^{\nfacnodes\times\nfacnodes}$ with entries given by $W_{ij} \df \omega^{(i)}\delta_{ij}$ and $B_{ij}^{(\zeta)} \df \omega^{(\zeta,i)}\delta_{ij}$, respectively, derivative operators $\mat{D}^{(m)} \in \mathbb{R}^{\nvolnodes \times \nvolnodes}$ can be constructed by applying the chain rule to a tensor-product Lagrange interpolant in collapsed coordinates, resulting in
\begin{subequations}\label{eq:d_2d}
\allowdisplaybreaks
\begin{align}
D_{\sigma(\vec{\alpha}) \sigma(\vec{\beta})}^{(1)} &\df \frac{2}{1-\eta_{q_2,\alpha_2}^{(0,0)}} \frac{\dd \ell_{q_1,\beta_1}^{(0,0)}}{\dd \eta_1} (\eta_{q_1,\alpha_1}^{(0,0)}) \delta_{\alpha_2\beta_2}, \label{eq:d1_2d}\\
D_{\sigma(\vec{\alpha}) \sigma(\vec{\beta})}^{(2)} &\df \frac{1+\eta_{q_1,\alpha_1}^{(0,0)}}{1-\eta_{q_2,\alpha_2}^{(0,0)}} \frac{\dd \ell_{q_1,\beta_1}^{(0,0)}}{\dd \eta_1} (\eta_{q_1,\alpha_1}^{(0,0)}) \delta_{\alpha_2\beta_2} + \delta_{\alpha_1\beta_1}\frac{\dd \ell_{q_2,\beta_2}^{(0,0)}}{\dd \eta_2} (\eta_{q_2,\alpha_2}^{(0,0)}).\label{eq:d2_2d}
\end{align}
\end{subequations}
Similarly, the matrices $\mat{R}^{(\zeta)} \in \mathbb{R}^{\nfacnodes\times\nvolnodes}$ have entries given by
\begin{subequations}\label{eq:r_2d}
\begin{align}
R_{i, \sigma(\vec{\beta})}^{(1)} &\df \ell_{q_1,\beta_1}^{(0,0)}(\eta_{q_f,i-1}^{(0,0)})\ell_{q_2,\beta_2}^{(0,0)}(-1),\\ 
R_{i, \sigma(\vec{\beta})}^{(2)} &\df \ell_{q_1,\beta_1}^{(0,0)}(1)\ell_{q_2,\beta_2}^{(0,0)}(\eta_{q_f,i-1}^{(0,0)}),\\
R_{i, \sigma(\vec{\beta})}^{(3)} &\df \ell_{q_1,\beta_1}^{(0,0)}(-1)\ell_{q_2,\beta_2}^{(0,0)}(\eta_{q_f,i-1}^{(0,0)}).
\end{align}
\end{subequations} 
In this work, we construct SBP operators of degree $q \in \mathbb{N}_0$ using LG quadrature rules in the $\eta_1$ and $\eta_2$ directions as well as on each facet, with $q_1 = q_2 = q_f = q$. As a consequence of \cite[Theorem 3.1]{montoya_sem_23}, such operators are guaranteed to satisfy the conditions of \cref{def:sbp} for any polynomial degree $q$, with boundary operators in the form of \eqref{eq:e_decomp}.

\subsection{Tensor-product summation-by-parts operators on the reference tetrahedron}\label{sec:sbp_tet}
\begin{figure}[!t]
\centering
\begin{subfigure}[c]{0.35\textwidth}
\centering
\includegraphics[height=42mm]{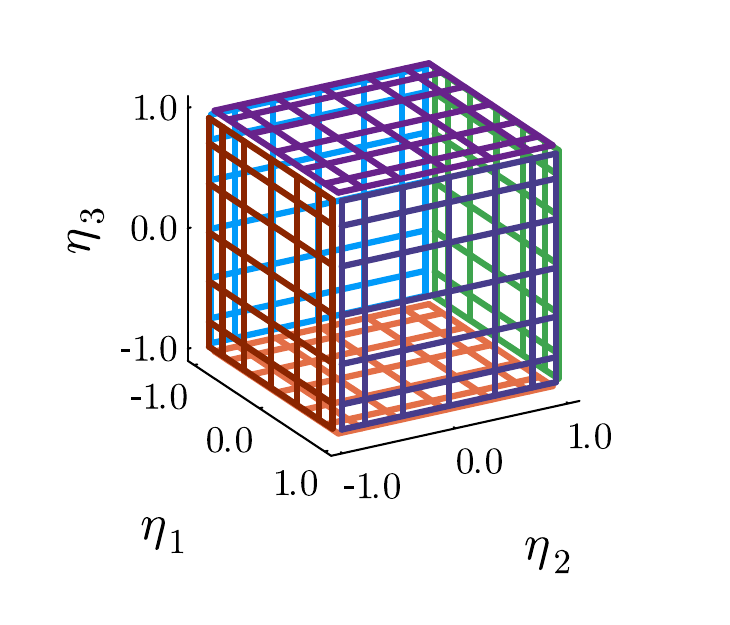}
\end{subfigure}~
\begin{subfigure}[c]{0.15\textwidth}
\centering
\Large $\longrightarrow$\\
$\vec{\chi}$
\end{subfigure}~
\begin{subfigure}[c]{0.35\textwidth}
\centering
\includegraphics[height=42mm]{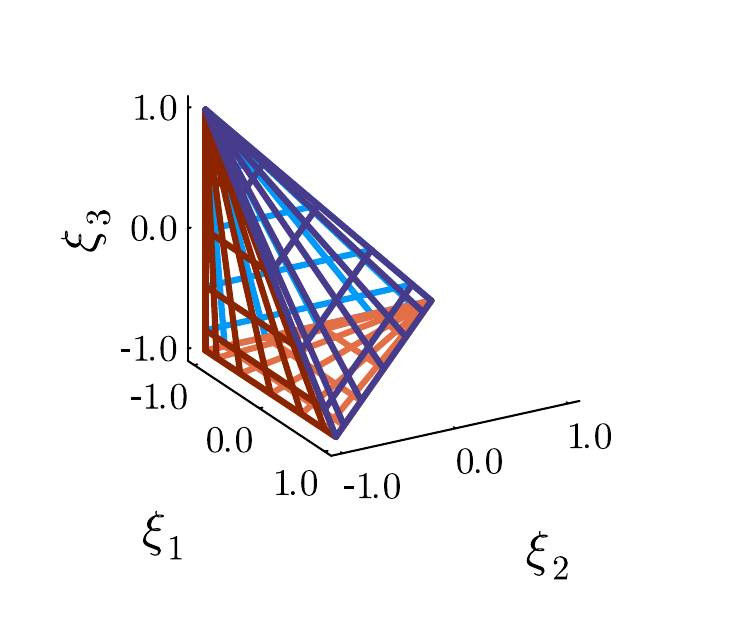}
\end{subfigure}
\caption{Illustration of the collapsed coordinate transformation $\vec{\xi} = \vec{\chi}(\vec{\eta})$ from the cube to the reference tetrahedron}\label{fig:mapping_3d}
\end{figure}
For tetrahedral elements, the reference domain is given by
\begin{equation}\label{eq:reference_tetrahedron}
\hat{\Omega} \df \big\{\vec{\xi} \in [-1,1]^3 : \xi_1 + \xi_2 + \xi_3 \leq -1  \big\},
\end{equation}
with the facets (i.e.\ faces of the tetrahedron) numbered as
\begin{equation}\label{eq:tet_facets}
\begin{alignedat}{2}
\hat{\Gamma}^{(1)} &\df \big\{ \vec{\xi} \in \hat{\Omega} : \xi_2 = -1 \big\}, \quad \hat{\Gamma}^{(2)} &&\df \big\{ \vec{\xi} \in \hat{\Omega} : \xi_1 + \xi_2 + \xi_3 = -1 \big\}, \\
\hat{\Gamma}^{(3)} &\df \big\{ \vec{\xi} \in \hat{\Omega} : \xi_1 = -1 \big\}, \quad \hat{\Gamma}^{(4)} &&\df \big\{ \vec{\xi} \in \hat{\Omega} : \xi_3 = -1 \big\}.
\end{alignedat}
\end{equation}
The collapsed coordinate transformation $\vec{\chi} : [-1,1]^3 \to \hat{\Omega}$ from the cube to the tetrahedron, as depicted in \cref{fig:mapping_3d}, is constructed from three successive applications of \eqref{eq:tri_mapping}, resulting in
\begin{equation}\label{eq:tet_mapping}
\vec{\chi}(\vec{\eta}) \df \mqty[\tfrac{1}{4}(1+\eta_1)(1-\eta_2)(1-\eta_2) - 1 ,\\ \tfrac{1}{2}(1+\eta_2)(1-\eta_3) - 1 ,\\ \eta_3].
\end{equation}
Similarly to the triangular case, we let $q_m \in \mathbb{N}_0$ denote the degree of the approximation in the $\eta_m$ coordinate and introduce $\nvolnodes \df (q_1+1)(q_2+1)(q_3+1)$ tensor-product volume quadrature nodes, which are ordered using the bijective mapping $\sigma : \{0:q_1\} \times \{0:q_2\} \times \{0:q_3\} \to \{1:\nvolnodes\}$. Taking $(a,b) = (0,0)$ in the $\eta_1$ and $\eta_2$ directions and $(a,b) = (1,0)$ in the $\eta_3$ direction, we obtain 
\begin{equation}\label{eq:tet_quadrature}
\vec{\xi}^{(\sigma(\vec{\alpha}))} \df \vec{\chi}\big(\eta_{q_1,\alpha_1}^{(0,0)},\eta_{q_2,\alpha_2}^{(0,0)}, \eta_{q_3,\alpha_3}^{(1,0)}\big), \quad \omega^{(\sigma(\vec{\alpha}))} \df \frac{(1-\eta_{q_2,\alpha_2}^{(0,0)})(1-\eta_{q_3,\alpha_3}^{(1,0)})}{8}\omega_{q_1,\alpha_1}^{(0,0)}\omega_{q_2,\alpha_2}^{(0,0)}\omega_{q_3,\alpha_3}^{(1,0)}.
\end{equation}
Defining a collapsed coordinate system $(\eta_{f1},\eta_{f2})$ on each face of the tetrahedron, we let $q_{f1},q_{f2} \in \mathbb{N}_0$ and introduce the bijective mapping $\sigma_{f} : \{0:q_{f1}\} \times \{0:q_{f2}\} \to \{1:\nfacnodes\}$, where $\nfacnodes \df (q_{f1} + 1)(q_{f2}+1)$ denotes the number of nodes on the facet $\hat{\Gamma}^{(\zeta)} \subset \partial\hat{\Omega}$. If a Jacobi quadrature rule with $(a,b) = (1,0)$ is used in the $\eta_{f2}$ coordinate, the facet quadrature nodes are given by
\begin{equation}\label{eq:tet_facet_quadrature_nodes}
\begin{alignedat}{4}
\vec{\xi}^{(1,\sigma_f(\vec{\alpha}))} &\df \vec{\chi}(\eta_{q_{f1},\alpha_1}^{(0,0)},-1, \eta_{q_{f2},\alpha_2}^{(1,0)}), \quad &&\vec{\xi}^{(2,\sigma_f(\vec{\alpha}))} &&\df \vec{\chi}(1,\eta_{q_{f1},\alpha_1}^{(0,0)}, \eta_{q_{f2},\alpha_2}^{(1,0)}), \\
\vec{\xi}^{(3,\sigma_f(\vec{\alpha}))} &\df \vec{\chi}(-1,\eta_{q_{f1},\alpha_1}^{(0,0)}, \eta_{q_{f2},\alpha_2}^{(1,0)}), \quad &&\vec{\xi}^{(4,\sigma_f(\vec{\alpha}))} &&\df \vec{\chi}(\eta_{q_{f1},\alpha_1}^{(0,0)}, \eta_{q_{f2},\alpha_2}^{(1,0)},-1),
\end{alignedat}
\end{equation}
and the corresponding weights are
\begin{equation}\label{eq:tet_facet_quadrature_weights}
\begin{alignedat}{4}
\omega^{(1,\sigma_f(\vec{\alpha}))} &\df \frac{1}{2}\omega_{q_{f1},\alpha_1}^{(0,0)}\omega_{q_{f2},\alpha_2}^{(1,0)}, \quad &&\omega^{(2,\sigma_f(\vec{\alpha}))} &&\df \frac{\sqrt{3}}{2}\omega_{q_{f1},\alpha_1}^{(0,0)}\omega_{q_{f2},\alpha_2}^{(1,0)} , \\
\omega^{(3,\sigma_f(\vec{\alpha}))} &\df \frac{1}{2}\omega_{q_{f1},\alpha_1}^{(0,0)}\omega_{q_{f2},\alpha_2}^{(1,0)}, \quad &&\omega^{(4,\sigma_f(\vec{\alpha}))} &&\df \frac{1}{2}\omega_{q_{f1},\alpha_1}^{(0,0)}\omega_{q_{f2},\alpha_2}^{(1,0)}.
\end{alignedat}
\end{equation}
The entries of the differentiation and interpolation/extrapolation matrices are then given by
\begin{subequations}\label{eq:d_3d}
\begin{align}
D_{\sigma(\vec{\alpha}) \sigma(\vec{\beta})}^{(1)} \df& \frac{4}{(1-\eta_{q_2,\alpha_2}^{(0,0)})(1-\eta_{q_3,\alpha_3}^{(1,0)})}\frac{\dd \ell_{q_1,\beta_1}^{(0,0)}}{\dd \eta_1} (\eta_{q_1,\alpha_1}^{(0,0)}) \delta_{\alpha_2\beta_2}\delta_{\alpha_3\beta_3},\label{eq:d1_3d} \\
D_{\sigma(\vec{\alpha}) \sigma(\vec{\beta})}^{(2)} \df& \frac{2(1+\eta_{q_1,\alpha_1}^{(0,0)})}{(1-\eta_{q_2,\alpha_2}^{(0,0)})(1-\eta_{q_3,\alpha_3}^{(1,0)})}  \frac{\dd \ell_{q_1,\beta_1}^{(0,0)}}{\dd \eta_1} (\eta_{q_1,\alpha_1}^{(0,0)}) \delta_{\alpha_2\beta_2} \delta_{\alpha_3\alpha_3} + \frac{2}{1-\eta_{q_3,\alpha_3}^{(1,0)}}\delta_{\alpha_1\beta_1}\frac{\dd \ell_{q_2,\beta_2}^{(0,0)}}{\dd \eta_2} (\eta_{q_2,\alpha_2}^{(0,0)})\delta_{\alpha_3\beta_3} \label{eq:d2_3d} , \\
D_{\sigma(\vec{\alpha}) \sigma(\vec{\beta})}^{(3)} \df&  \frac{2(1+\eta_{q_1,\alpha_1}^{(0,0)})}{(1-\eta_{q_2,\alpha_2}^{(0,0)})(1-\eta_{q_3,\alpha_3}^{(1,0)})}  \frac{\dd \ell_{q_1,\beta_1}^{(0,0)}}{\dd \eta_1} (\eta_{q_1,\alpha_1}^{(0,0)}) \delta_{\alpha_2\beta_2} \delta_{\alpha_3\alpha_3}\notag\\*  &+ \frac{1+\eta_{q_2,\alpha_2}^{(0,0)}}{1-\eta_{q_3,\alpha_3}^{(1,0)}}\delta_{\alpha_1\beta_1}\frac{\dd \ell_{q_2,\beta_2}^{(0,0)}}{\dd \eta_2} (\eta_{q_2,\alpha_2}^{(0,0)})\delta_{\alpha_3\beta_3} + \delta_{\alpha_1\beta_1} \delta_{\alpha_2\beta_2} \frac{\dd \ell_{q_3,\beta_3}^{(1,0)}}{\dd \eta_3} (\eta_{q_3,\alpha_3}^{(1,0)}), \label{eq:d3_3d} 
\end{align}
\end{subequations}
and
\begin{subequations}\label{eq:r_3d}
\begin{alignat}{2}
R_{\sigma_f(\vec{\alpha})\sigma(\vec{\beta})}^{(1)} &\df \ell_{q_1,\beta_1}^{(0,0)}(\eta_{q_{f1},\alpha_1}^{(0,0)}) \ell_{q_2,\beta_2}^{(0,0)}(-1)\ell_{q_3,\beta_3}^{(1,0)}(\eta_{q_{f2},\alpha_2}^{(1,0)}),  \\
R_{\sigma_f(\vec{\alpha})\sigma(\vec{\beta})}^{(2)}  &\df \ell_{q_1,\beta_1}^{(0,0)}(1)\ell_{q_2,\beta_2}^{(0,0)}(\eta_{q_{f1},\alpha_1}^{(0,0)})\ell_{q_3,\beta_3}^{(1,0)}(\eta_{q_{f2},\alpha_2}^{(1,0)}), \\
R_{\sigma_f(\vec{\alpha})\sigma(\vec{\beta})}^{(3)}  &\df \ell_{q_1,\beta_1}^{(0,0)}(-1)\ell_{q_2,\beta_2}^{(0,0)}(\eta_{q_{f1},\alpha_1}^{(0,0)})\ell_{q_3,\beta_3}^{(1,0)}(\eta_{q_{f2},\alpha_2}^{(1,0)}), \\
R_{\sigma_f(\vec{\alpha})\sigma(\vec{\beta})}^{(4)}  &\df \ell_{q_1,\beta_1}^{(0,0)}(\eta_{q_{f1},\alpha_1}^{(0,0)})\ell_{q_2,\beta_2}^{(0,0)}(\eta_{q_{f2},\alpha_2}^{(1,0)}) \ell_{q_3,\beta_3}^{(1,0)}(-1),
\end{alignat}
\end{subequations}  
respectively, where the diagonal matrices $\mat{W}$ and $\mat{B}^{(\zeta)}$ are defined similarly to the triangular case using the quadrature weights in \eqref{eq:tet_quadrature} and \eqref{eq:tet_facet_quadrature_weights}, respectively. In this paper, we use Gauss quadrature rules with respect to the Jacobi weights indicated in the superscripts, and we take $q_1$, $q_2$, $q_3$, $q_{f1}$, and $q_{f2}$ to be all equal to $q \in \mathbb{N}_0$. It then follows from \cite[Theorem 4.1]{montoya_sem_23} that such choices result in diagonal-norm SBP operators of degree $q$ for which the boundary matrices are given as in \eqref{eq:e_decomp}. Examples of volume quadrature nodes used to construct tensor-product SBP operators on the reference triangle and tetrahedron are shown in \cref{fig:tensor_nodes}.

\begin{figure}[t!]
\centering
\begin{subfigure}{0.50\textwidth}
\centering
\includegraphics[height=42mm]{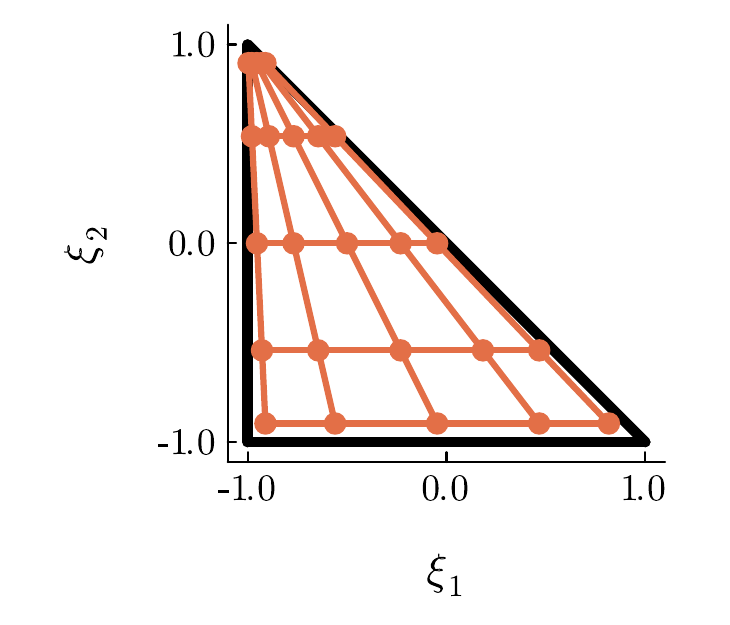}
\caption{Tensor-product quadrature nodes on the triangle}
\end{subfigure}~
\begin{subfigure}{0.48\textwidth}
\centering
\includegraphics[height=42mm]{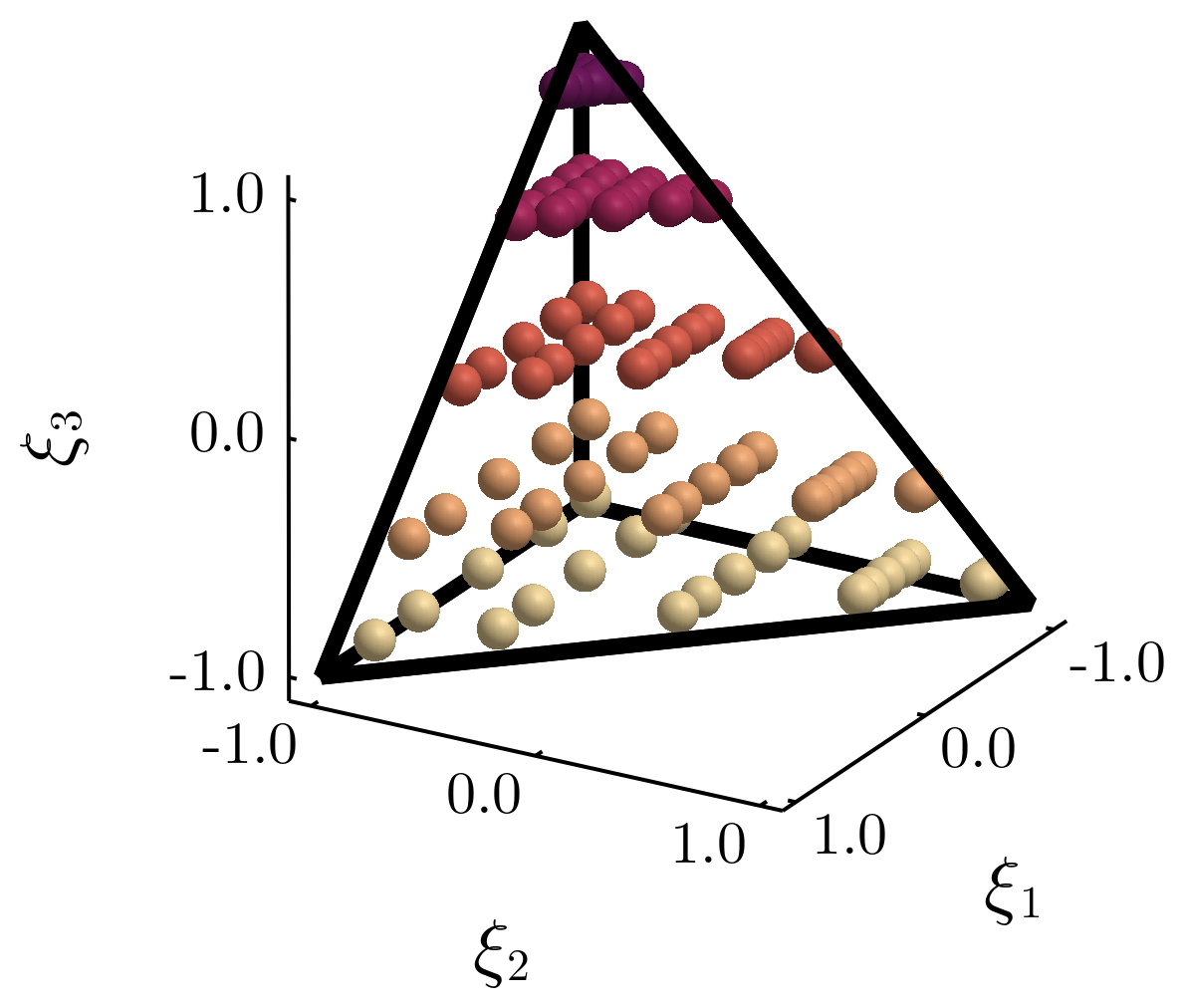}
\caption{Tensor-product quadrature nodes on the tetrahedron}
\end{subfigure}
\caption{Tensor-product volume quadrature nodes for SBP operators of degree $q = 4$ on the triangle and tetrahedron}\label{fig:tensor_nodes} 
\end{figure}
\begin{remark}
The choices of $\omega^{(\sigma(\vec{\alpha}))} \df \omega_{q_1,\alpha_1}^{(0,0)}\omega_{q_2,\alpha_2}^{(1,0)}/2$ and $\omega^{(\sigma(\vec{\alpha}))} \df \omega_{q_1,\alpha_1}^{(0,0)}\omega_{q_2,\alpha_2}^{(1,0)}\omega_{q_3,\alpha_3}^{(2,0)}/8$ for triangles and tetrahedra, respectively, are made in \cite{sherwin_karniadakis_triangular_sem_95} and \cite{sherwin_karniadakis_tetrahedra_hp_fem_96} in order to use the Jacobi weight to subsume the factor in \eqref{eq:tri_quadrature} or \eqref{eq:tet_quadrature} resulting from the Jacobian determinant of the collapsed coordinate transformation. However, such Jacobi weights lead to nodal derivative operators in \eqref{eq:d_2d} and \eqref{eq:d_3d} which do not, in general, satisfy the SBP property on the reference triangle and tetrahedron, respectively. The operators introduced in \cite{montoya_sem_23} and described in this section, which are constructed specifically to satisfy the SBP property on the reference simplex, therefore differ from existing SEM operators in collapsed coordinates.
\end{remark}
\section{Entropy-stable discontinuous spectral-element framework}\label{sec:methods}
In this section, we detail how the SBP operators described in the previous section can be used to construct entropy-stable DSEMs of any order on curvilinear unstructured grids. Due to the generality of the SBP approach, however, the formulations which we present can be used with \emph{any} set of diagonal-norm SBP operators on the reference triangle or tetrahedron for which the boundary matrices can be decomposed as in \eqref{eq:e_decomp}. While we introduce the proposed schemes within the context of a generalized framework in order to enable the unified implementation and analysis of DSEMs using SBP operators and to facilitate the comparison of the proposed methods to existing schemes, efficient algorithms require consideration of the particular properties of the operators which constitute a discretization, a topic which we will address in \cref{sec:implementation}.

\subsection{Mesh and coordinate transformation}\label{sec:meshmap}
The first step in constructing a DSEM is to subdivide the spatial domain $\Omega$ into a \emph{mesh} or \emph{grid}, which consists of a collection $\{\Omega^{(\kappa)}\}_{\kappa \in \{1:\nelem\}}$ of $\nelem \in \mathbb{N}$ closed, bounded, and connected elements $\Omega^{(\kappa)} \subset \Omega$ with non-empty, non-overlapping interiors, satisfying
\begin{equation}\label{eq:non_overlapping}
\bigcup_{\kappa=1}^{\nelem} \Omega^{(\kappa)} = \bar{\Omega}.
\end{equation}
We denote the characteristic element size for such a mesh as $h \in \mathbb{R}^+$. In this work, we assume that the mesh is time invariant and that each element is the image of the reference triangle or tetrahedron under a polynomial mapping $\fvec{x}^{(\kappa)} \in [\mathbb{P}_{p_g}(\hat{\Omega})]^d$ of total degree $p_g \in \mathbb{N}$. Such a mapping is given in terms of a multivariate Lagrange basis $\{\ell_{p_g}^{(i)}\}_{i\in \{1:\ngeom\}}$ associated with a set of nodes $\{\vec{\xi}_{p_g}^{(i)}\}_{i\in\{1:\ngeom\}} \subset \hat{\Omega}$ as
\begin{equation}\label{eq:poly_mapping}
\fvec{x}^{(\kappa)}(\vec{\xi}) \df \sum_{i=1}^{\ngeom} \vec{x}_{p_g}^{(\kappa,i)} \ell_{p_g}^{(i)}(\vec{\xi}),
\end{equation}
where $\{\vec{x}_{p_g}^{(\kappa,i)}\}_{i\in\{1:\ngeom\}}$ are the prescribed physical positions of the mapping nodes. To ensure a watertight mesh, we assume that the mapping nodes contain a subset of nodes on each facet $\hat{\Gamma}^{(\zeta)} \subset \partial\hat{\Omega}$ which are unisolvent for the corresponding trace space $\mathbb{P}_{p_g}(\hat{\Gamma}^{(\zeta)}) \df \{\fn{v}\lvert_{\hat{\Gamma}^{(\zeta)}} : \fn{v} \in \mathbb{P}_{p_g}(\hat{\Omega})\}$, thus resulting in continuity at element interfaces. Denoting the Jacobian of the transformation by $\nabla_{\vec{\xi}}\fvec{x}^{(\kappa)}(\vec{\xi}) \in \mathbb{R}^{d\times d}$ and defining $\fn{J}^{(\kappa)}(\vec{\xi}) \df \operatorname{det}(\nabla_{\vec{\xi}}\fvec{x}^{(\kappa)}(\vec{\xi}))$, we assume that the mapping is bijective and orientation preserving, satisfying
\begin{equation}\label{eq:jacobian_positive}
\fn{J}^{(\kappa)}(\vec{\xi}) > 0, \quad \forall \, \vec{\xi} \in \hat{\Omega}.
\end{equation}
The adjugate of the Jacobian matrix is given by $\vc{\vec{G}}^{(\kappa)}(\vec{\xi}) \df \operatorname{det}(\nabla_{\vec{\xi}}\fvec{x}^{(\kappa)}(\vec{\xi}))(\nabla_{\vec{\xi}}\fvec{x}^{(\kappa)}(\vec{\xi}))^{-1}$ with entries (often referred to as the \emph{metric terms} in the literature) satisfying the \emph{metric identities},
\begin{equation}\label{eq:metric_identities}
\sum_{l=1}^d \frac{\partial\fn{G}_{lm}^{(\kappa)} (\vec{\xi})}{\partial\xi_l} = 0, \quad \forall \, \vec{\xi} \in \hat{\Omega}, \quad \forall\, m \in \{1:d\}.
\end{equation}
Using the metric identities, we can express \eqref{eq:pde} in conservation form on the reference element as
\begin{equation}\label{eq:pde_ref}
\fn{J}^{(\kappa)}(\vec{\xi})\frac{\partial\vc{\fn{u}}(\fvec{x}^{(\kappa)}(\vec{\xi}),t)}{\partial t} + \sum_{l=1}^d \frac{\partial}{\partial \xi_l}\Bigg(\sum_{m=1}^d G_{lm}^{(\kappa)}(\vec{\xi})\vc{\fn{f}}_m(\vc{\fn{u}}(\fvec{x}^{(\kappa)}(\vec{\xi}),t))  \Bigg) = \vc{0}.
\end{equation}
Further details regarding the formulation of conservation laws in curvilinear coordinates are provided, for example, in Pulliam and Zingg \cite[Section 4.2]{pulliam14} and Kopriva \cite[Section 6.2]{kopriva09}.

\subsection{Approximation of the metric terms}\label{sec:metrics}
Considering a mapping using polynomials of degree $p_g$ as in \eqref{eq:poly_mapping}, the metric terms are polynomials of degree $p_g - 1$ in two dimensions and degree $2p_g-2$ in three dimensions. Since operations such as differentiation using SBP operators are exact for polynomials of at most degree $q$, we cannot expect that a discrete analogue of the metric identities in \eqref{eq:metric_identities} will hold unless $p_g \leq q + 1$ in two dimensions or $p_g \leq \lfloor q/2 \rfloor + 1$ in three dimensions. To circumvent this requirement for a subparametric mapping in three-dimensional case, we use an adaptation by Chan and Wilcox \cite[Section 5]{chan_wilcox_entropystable_curvilinear_19} of Kopriva's approximation of the metric terms in \emph{conservative curl form} \cite[Eq.\ (36)]{kopriva_metric_identities_discontinuous_sem_curved_06}, which is itself based on techniques introduced by Thomas and Lombard \cite{thomas_lombard_gcl_79}. To obtain such an approximation, we introduce a Lagrange basis $\{\ell_{q+1}^{(i)}\}_{i\in \{1:N_{q+1}^*\}}$ for $\mathbb{P}_{q+1}(\hat{\Omega})$ associated with a set of nodes $\{\vec{\xi}_{q+1}^{(i)}\}_{i\in\{1:N_{q+1}^*\}} \subset \hat{\Omega}$ and construct polynomial interpolants of the form
\begin{subequations}\label{eq:metric_interpolants}
\begin{align}
\vec{r}_{q+1}^{(\kappa,1)}(\vec{\xi}) &\df\sum_{i=1}^{N_{q+1}^*} \fn{x}_3^{(\kappa)}(\vec{\xi}_{q+1}^{(i)}) \nabla_{\vec{\xi}}\fn{x}_2^{(\kappa)}(\vec{\xi}_{q+1}^{(i)}) \ell_{q+1}^{(i)}(\vec{\xi}),\\
\vec{r}_{q+1}^{(\kappa,2)}(\vec{\xi}) &\df \sum_{i=1}^{N_{q+1}^*} \fn{x}_3^{(\kappa)}(\vec{\xi}_{q+1}^{(i)}) \nabla_{\vec{\xi}}\fn{x}_1^{(\kappa)}(\vec{\xi}_{q+1}^{(i)}) \ell_{q+1}^{(i)}(\vec{\xi}),\\
\vec{r}_{q+1}^{(\kappa,3)}(\vec{\xi}) &\df \sum_{i=1}^{N_{q+1}^*} \fn{x}_1^{(\kappa)}(\vec{\xi}_{q+1}^{(i)}) \nabla_{\vec{\xi}}\fn{x}_2^{(\kappa)}(\vec{\xi}_{q+1}^{(i)}) \ell_{q+1}^{(i)}(\vec{\xi}).
\end{align}
\end{subequations}
The above functions $\vec{r}_{q+1}^{(\kappa,m)} \in [\mathbb{P}_{q+1}(\hat{\Omega})]^3$ are used to define the matrix of approximate metric terms
\begin{equation}\label{eq:conservative_curl}
\vc{\vec{G}}^{(\kappa)}(\vec{\xi}) \df \big[-\nabla_{\vec{\xi}} \times \vec{r}_{q+1}^{(\kappa,1)}(\vec{\xi}),\ \nabla_{\vec{\xi}} \times \vec{r}_{q+1}^{(\kappa,2)}(\vec{\xi}),\ \nabla_{\vec{\xi}} \times \vec{r}_{q+1}^{(\kappa,3)}(\vec{\xi}) \big]^\T,
\end{equation}
which has entries of degree $q$ and satisfies \eqref{eq:metric_identities} by construction. Using the exact metric terms in two dimensions and the approximation \eqref{eq:conservative_curl} in three dimensions, we can compute the outward unit normal vector to the facet $\Gamma^{(\kappa,\zeta)} \subset \partial\Omega^{(\kappa)}$ which is the image of $\hat{\Gamma}^{(\zeta)}$ under the mapping $\fvec{x}^{(\kappa)}$ as 
\begin{equation}\label{eq:nanson}
\vec{n}^{(\kappa,\zeta)}(\fvec{x}^{(\kappa)}(\vec{\xi})) \df \frac{\vc{\vec{G}}^{(\kappa)}(\vec{\xi})^\T\hat{\vec{n}}^{(\zeta)}}{\fn{J}^{(\kappa,\zeta)}(\vec{\xi})}, \quad \text{where} \quad \fn{J}^{(\kappa,\zeta)}(\vec{\xi}) \df \lVert\vc{\vec{G}}^{(\kappa)}(\vec{\xi})^\T\hat{\vec{n}}^{(\zeta)}\rVert. 
\end{equation}
\begin{remark}
In general, the normals computed as in \eqref{eq:nanson} are not exact when the metric terms are computed approximately using \eqref{eq:conservative_curl}. However, if the analytically defined mesh is watertight and the nodes used for the interpolants in \eqref{eq:metric_interpolants} define a continuous approximation space, the approximate normals remain equal and opposite at element interfaces (see, for example, \cite[Theorem 5]{chan_wilcox_entropystable_curvilinear_19}).
\end{remark}

\subsection{Modal polynomial expansions on triangles and tetrahedra}\label{sec:modal_expansion}
The focus of this paper is on \emph{modal} formulations, in which the degrees of freedom for the semi-discrete approximation to the $e^{\mathrm{th}}$ solution variable on the $k^{\mathrm{th}}$ element are taken to be the expansion coefficients with respect to a polynomial basis $\{\phi^{(i)}\}_{i \in \{1:\npoly\}}$ of degree $p \leq q$ contained within the vector $\vc{\tilde{u}}^{(h,\kappa,e)}(t) \in \mathbb{R}^{\npoly}$. Such coefficients define an approximate solution $\vc{\fn{u}}^{(h,\kappa)}(\cdot,t) \in [\mathbb{P}_p(\hat{\Omega})]^{\ncons}$ as
\begin{equation}\label{eq:poly_approx}
\fn{u}_e(\fvec{x}^{(\kappa)}(\vec{\xi}),t) \approx \fn{u}_e^{(h,\kappa)}(\vec{\xi},t) \df \sum_{i=1}^{\npoly} \tilde{u}_i^{(h,\kappa,e)}(t) \phi^{(i)}(\vec{\xi}),
\end{equation}
resulting in a global approximation given piecewise by $ \vc{\fn{u}}(\vec{x},t) \approx \vc{\fn{u}}^h(\vec{x},t) \df \vc{\fn{u}}^{(h,\kappa)}((\fvec{x}^{(\kappa)})^{-1}(\vec{x}),t)$ for $\vec{x} \in \Omega^{(\kappa)}$. Evaluating \eqref{eq:poly_approx} at each volume quadrature node, the vector $\vc{u}^{(h,\kappa, e)}(t) \in \mathbb{R}^{\nvolnodes}$ of nodal solution values can be expressed in terms of the \emph{generalized Vandermonde matrix} $\mat{V} \in \mathbb{R}^{\nvolnodes\times\npoly}$ as
\begin{equation}\label{eq:modal_to_nodal}
\vc{u}^{(h,\kappa, e)}(t) = \mat{V}\vc{\tilde{u}}^{(h,\kappa,e)}(t), \quad \text{where} \quad V_{ij} \df \phi^{(j)}(\vec{\xi}^{(i)}).
\end{equation}
In order to construct polynomial bases on the triangle and tetrahedron for which operations such as \eqref{eq:modal_to_nodal} are amenable to sum factorization when tensor-product quadrature rules in collapsed coordinates are used, we follow \cite[Section 3.2]{karniadakis_sherwin_spectral_hp_element} and define the one-dimensional \emph{principal functions}
\begin{equation}\label{eq:principal_functions}
\begin{gathered}
\psi_1^{(\alpha_1)}(\eta_1) \df \sqrt{2}\fn{P}_{\alpha_1}^{(0,0)}(\eta_1), \quad
\psi_2^{(\alpha_1,\alpha_2)}(\eta_2) \df (1-\eta_2)^{\alpha_1}\fn{P}_{\alpha_2}^{(2\alpha_1+1,0)}(\eta_2), \\
\psi_3^{(\alpha_1,\alpha_2,\alpha_3)}(\eta_3) \df 2(1-\eta_3)^{\alpha_1 + \alpha_2}\fn{P}_{\alpha_3}^{(2\alpha_1+2\alpha_2+2,0)}(\eta_3).
\end{gathered}
\end{equation}
The normalized Proriol--Koornwinder--Dubiner (PKD) orthogonal polynomials \cite{proriol_polynomials_57,koornwinder_orthogonal_polynomials_75,dubiner_spectral_triangle_91} are then given with respect to the collapsed coordinate system on the reference triangle as
\begin{equation}\label{eq:modal_basis_2d}
\phi^{(\pi(\vec{\alpha}))}(\vec{\chi}(\vec{\eta})) \df \psi_1^{(\alpha_1)}(\eta_1) \psi_2^{(\alpha_1,\alpha_2)}(\eta_2), 
\end{equation}
and on the reference tetrahedron as
\begin{equation}\label{eq:modal_basis_3d}
\phi^{(\pi(\vec{\alpha}))}(\vec{\chi}(\vec{\eta})) \df \psi_1^{(\alpha_1)}(\eta_1) \psi_2^{(\alpha_1,\alpha_2)}(\eta_2)\psi_3^{(\alpha_1,\alpha_2,\alpha_3)}(\eta_3), 
\end{equation}where we order the multi-indices $\vec{\alpha} \in \mathcal{N}(p)$ using the bijection $\pi : \mathcal{N}(p) \to \{1:\npoly\}$. As described, for example, in \cite[Sections 4.1.6.1 and 4.1.6.2]{karniadakis_sherwin_spectral_hp_element}, the ``warped'' tensor-product structure of the PKD basis allows for the generalized Vandermonde matrix in \eqref{eq:modal_to_nodal} or its transpose to be applied in $\mathcal{O}(p^{d+1})$ operations through sum factorization when tensor-product quadrature rules are used with $\mathcal{O}(p)$ nodes in each direction. Moreover, since the principal functions in \eqref{eq:principal_functions} have been scaled to obtain an orthonormal basis, the reference mass matrix $\mat{M} \df \mat{V}^\T\mat{W}\mat{V}$ is the identity matrix if the quadrature rule in \eqref{eq:sbp_quadrature} is of degree $\tau \geq 2p$.
\begin{remark}
A nodal formulation is recovered by taking $\mat{V}$ to be the identity matrix and directly evolving the nodal solution vector $\vc{u}^{(h,\kappa,e)}(t)$. Such collocation-based approaches using tensor-product LGL \cite{carpenter_entropystable_collocation_14,gassner_winters_kopriva_splitform_nodaldg_sbp_16} or LG \cite{chan_delrey_carpenter_gauss_collocation_19} quadrature rules are popular for quadrilateral and hexahedral elements due to the numerical solution being available at the volume quadrature nodes (and, in the case of LGL quadrature, the facet quadrature nodes) without needing to perform the matrix operation in \eqref{eq:modal_to_nodal}. When employing collapsed coordinate systems on triangular and tetrahedral elements, however, we are primarily interested in modal formulations, since the use of a total-degree polynomial expansion as in \eqref{eq:poly_approx} avoids the severe explicit time step restriction which would otherwise result from the use of a collocated formulation in collapsed coordinates (i.e.\ due to the clustering of resolution near the singular vertices). This was a primary motivation for Dubiner's introduction of such an approach in \cite{dubiner_spectral_triangle_91}, and is further motivated in the present context by numerical experiments in \cite[Section 7.4]{montoya_sem_23}, in which the authors found the spectral radii for energy-stable modal discretizations of the linear advection equation using the tensor-product operators described in \cref{sec:tensor_sbp} to be nearly identical to those obtained using multidimensional SBP operators based on symmetric quadrature rules.
\end{remark}
\subsection{Discontinuous spectral-element methods using summation-by-parts operators}
Before introducing the proposed entropy-stable discretizations, we will first present the fundamental concepts and notation relating to the formulation of DSEMs using SBP operators within the context of standard weak-form DG methods as well as skew-symmetric formulations on curvilinear meshes. We will begin by deriving a weak-form DG approximation of \eqref{eq:cons_law} either by integrating by parts against a smooth test function on the physical element and performing a change of variables within each of the resulting integrals, or by integrating the transformed conservation law in \eqref{eq:pde_ref} by parts against a smooth test function on the reference element. In either case, we approximate each solution variable as in \eqref{eq:poly_approx} and consider test functions belonging to the same space, resulting in 
\begin{equation}\label{eq:weak_dg}
\begin{multlined}
\int_{\hat{\Omega}} \fn{v}(\vec{\xi}) \fn{J}^{(\kappa)}(\vec{\xi}) \frac{\partial\vc{\fn{u}}^{(h,\kappa)}(\vec{\xi},t)}{\partial t} \, \dd \vec{\xi} = \sum_{l=1}^d\int_{\hat{\Omega}} \frac{\partial \fn{v}(\vec{\xi})}{\partial \xi_l} \sum_{m=1}^d G_{lm}^{(\kappa)}(\vec{\xi})\vc{\fn{f}}_m(\vc{\fn{u}}^{(h,\kappa)}(\vec{\xi},t)) \, \dd \vec{\xi} \\ - \sum_{\zeta=1}^{\nfac} \int_{\hat{\Gamma}^{(\zeta)}} \fn{v}(\vec{\xi})\fn{J}^{(\kappa,\zeta)}(\vec{\xi})\vc{\fn{f}}^*\big(\vc{\fn{u}}^{(h,\kappa)}(\vec{\xi},t),\,\vc{\fn{u}}^{(h,\kappa,+)}(\vec{\xi},t),\, \vec{n}^{(\kappa,\zeta)}(\fvec{x}^{(\kappa)}(\vec{\xi}))\big)\, \dd \hat{s}, \quad \forall \, \fn{v} \in \mathbb{P}_p(\hat{\Omega}),
\end{multlined}
\end{equation}
where a \emph{numerical flux function} $\vc{\fn{f}}^* : \Upsilon \times \Upsilon \times \mathbb{S}^{d-1} \to \mathbb{R}^{\ncons}$ has been used to resolve the discontinuity in the global numerical solution at each facet, on which we denote the exterior solution as $\vc{\fn{u}}^{(h,\kappa,+)}(\vec{\xi},t) \in \Upsilon$. In order to obtain an algebraic formulation of \eqref{eq:weak_dg}, we begin by forming the diagonal matrices $\mat{J}^{(\kappa)}, \mat{G}^{(\kappa,l,m)} \in \mathbb{R}^{\nvolnodes\times\nvolnodes}$ and $\mat{J}^{(\kappa,\zeta)}, \mat{N}^{(\kappa,\zeta,m)} \in \mathbb{R}^{\nfacnodes \times \nfacnodes}$ with entries given by
\begin{equation}\label{eq:geometric_factors}
\begin{alignedat}{2}
J_{ij}^{(\kappa)} &\df \fn{J}^{(\kappa)}(\vec{\xi}^{(i)})\delta_{ij}, \quad &G_{ij}^{(\kappa,l,m)} &\df G_{lm}^{(\kappa)}(\vec{\xi}^{(i)}) \delta_{ij},\\
J_{ij}^{(\kappa,\zeta)} &\df \fn{J}^{(\kappa,\zeta)}(\vec{\xi}^{(\zeta,i)})\delta_{ij}, \quad & N_{ij}^{(\kappa,\zeta,m)} &\df n_m^{(\kappa,\zeta)}(\fvec{x}^{(\kappa)}(\vec{\xi}^{(\zeta,i)}))\delta_{ij}.
\end{alignedat}
\end{equation}
Defining the physical mass matrix as $\mat{M}^{(\kappa)} \df \mat{V}^\T\mat{W}\mat{J}^{(\kappa)}\mat{V}$, we then obtain a semi-discrete formulation governing the evolution of the vector of modal coefficients for the expansion in \eqref{eq:poly_approx} as
\begin{equation}\label{eq:modal}
\mat{M}^{(\kappa)}\frac{\dd \vc{\tilde{u}}^{(h,\kappa,e)}(t)}{\dd t} = \mat{V}^\T\vc{r}^{(h,\kappa,e)}(t), 
\end{equation}
where the form of $\vc{r}^{(h,\kappa,e)}(t) \in \mathbb{R}^{\nvolnodes}$ determines the particular DSEM recovered within the present framework. Evaluating the nodal solution as $\vc{u}^{(h,\kappa,e)}(t) \df \mat{V}\vc{\tilde{u}}^{(h,\kappa,e)}(t)$, we gather the solution variables at the volume and facet quadrature nodes as
\begin{equation}\label{eq:nodal_conservative variables}
\vc{u}_i^{(h,\kappa)}(t) \df \mqty[u_i^{(h,\kappa,1)}(t) \\ \vdots\\ u_i^{(h,\kappa,\ncons)}(t)], \quad \vc{u}_i^{(h,\kappa,\zeta)}(t) \df \mqty[[\mat{R}^{(\zeta)}\vc{u}^{(h,\kappa,1)}(t)]_i \\ \vdots \\ [\mat{R}^{(\zeta)}\vc{u}^{(h,\kappa,\ncons)}(t)]_i],
\end{equation}
and form the vectors $\vc{f}^{(\kappa,m,e)}(t) \in \mathbb{R}^{\nvolnodes}$ and $\vc{f}^{(*,\kappa,\zeta,e)} \in \mathbb{R}^{\nfacnodes}$ with entries given by
\begin{subequations}
\begin{align}
f_i^{(\kappa,m,e)}(t) &\df \fn{F}_{me}(\vc{u}_i^{(h,\kappa,e)}(t)), \\
f_i^{(*,\kappa,\zeta,e)}(t) &\df \fn{f}_e^*(\vc{u}_i^{(h,\kappa,\zeta)}(t),\, \vc{u}_i^{(+,\kappa,\zeta,e)}(t),\, \vec{n}^{(\kappa,\zeta)}(\fvec{x}^{(\kappa)}(\vec{\xi}^{(\zeta,i)}))),
\end{align}
\end{subequations}
where $\vc{u}_i^{(+,\kappa,\zeta,e)}(t) \in \Upsilon$ denotes the exterior solution state. 
With such definitions in place, the right-hand side of \eqref{eq:weak_dg} can be discretized directly using nodal SBP operators as
\begin{equation}\label{eq:rhs_cons}
\begin{multlined}
\vc{r}^{(h,\kappa,e)}(t) \df  \sum_{l=1}^d \big(\mat{Q}^{(l)}\big)^\T \sum_{m=1}^d \mat{G}^{(\kappa,l,m)}\vc{f}^{(\kappa,m,e)}(t) -\sum_{\zeta=1}^{\nfac}\big(\mat{R}^{(\zeta)}\big)^\T \mat{B}^{(\zeta)}\mat{J}^{(\kappa,\zeta)}\vc{f}^{(*,\kappa,\zeta,e)}(t),
\end{multlined}
\end{equation}
where it follows from a similar analysis to \cite[Section 4]{montoya_unifying_21} that the resulting scheme is conservative for an appropriate choice of numerical flux as well as energy stable for linear, constant-coefficient problems on meshes for which the mapping from reference to physical space is affine.\footnote{While the SBP operators in \cite{montoya_unifying_21} act on modal coefficients rather than nodal values, the analysis is essentially the same, relying on the SBP property and the exact differentiation and interpolation/extrapolation of constants.}  \par
To ensure energy stability for linear problems on curvilinear meshes, a property which cannot be guaranteed for discretizations in the form of \eqref{eq:rhs_cons}, the schemes proposed by the authors in \cite{montoya_tensor_product_22} and \cite{montoya_sem_23} employ a \emph{skew-symmetric} weak formulation adapted from the work of Gassner and Kopriva \cite{kopriva_gassner_dgsem_variable_coeff_14} and Del Rey Fern\'andez \etal \cite{delrey_mdsbp_sat_18,delrey_extension_of_dense_gsbp_tensor_curvilinear_19}, which is given by
\begin{equation}\label{eq:rhs_skew}
\begin{aligned}
\vc{r}^{(h,\kappa,e)}(t) &\df \frac{1}{2} \sum_{l=1}^d \bigg(\big(\mat{Q}^{(l)}\big)^\T\sum_{m=1}^d \mat{G}^{(\kappa,l,m)}\vc{f}^{(\kappa,m,e)}(t) -  \sum_{m=1}^d \mat{G}^{(\kappa,l,m)}\mat{Q}^{(l)}\vc{f}^{(\kappa,m,e)}(t)\bigg)  \\  &-\sum_{\zeta=1}^{\nfac}\big(\mat{R}^{(\zeta)}\big)^\T \mat{B}^{(\zeta)}\mat{J}^{(\kappa,\zeta)}\bigg(\vc{f}^{(*,\kappa,\zeta,e)}(t) -\frac{1}{2}\sum_{m=1}^d \mat{N}^{(\kappa,\zeta,m)}\mat{R}^{(\zeta)}\vc{f}^{(\kappa,m,e)}(t) \bigg).
\end{aligned}
\end{equation}
In \cref{sec:entropy_stable}, we will construct modifed formulations, which, unlike \eqref{eq:rhs_cons} or \eqref{eq:rhs_skew}, are provably \emph{entropy stable} for nonlinear systems of conservation laws endowed with suitable entropy functions and entropy fluxes, satisfying semi-discrete bounds analogous to \eqref{eq:entropy_balance}. However, we must first address the issue of inverting the mass matrix in \eqref{eq:modal} in order to obtain the time derivative.

\subsection{Weight-adjusted approximation of the inverse mass matrix}\label{sec:weight_adjusted}
Even when using orthonormal bases on the reference element such as those presented in \cref{sec:modal_expansion}, the mass matrix $\mat{M}^{(\kappa)}$ appearing on the left-hand side of \eqref{eq:modal} is dense when the mapping from the reference element to the physical element is not affine, and its inverse lacks a tensor-product structure amenable to sum factorization. Obtaining the time derivative for such a scheme in the context of explicit temporal integration thus requires either the storage and application of a non-tensorial factorization or inverse, or, otherwise, the solution of a dense $\npoly$ by $\npoly$ linear system, for each curved element in the mesh. To obtain a fully explicit formulation for the time derivative in \eqref{eq:modal}, we use a \emph{weight-adjusted} approximation given by
\begin{equation}\label{eq:weight_adjusted_inverse}
\big(\mat{M}^{(\kappa)}\big)^{-1} \approx \mat{M}^{-1}\mat{V}^\T\mat{W}\big(\mat{J}^{(\kappa)}\big)^{-1}\mat{V}  \mat{M}^{-1} \eqdf \big(\mat{\tilde{M}}^{(\kappa)}\big)^{-1}.
\end{equation}
The above approximation was initially proposed by Chan \etal \cite{chan_weight_adjusted_dg_curvilinear_17} for the purpose of reducing storage requirements for curved elements. However, it was also demonstrated in \cite{montoya_sem_23} that such an approach preserves the tensor-product operator structure which would otherwise be lost by taking the inverse of the mass matrix. The time derivative can then be computed explicitly as
\begin{equation}\label{eq:dudt_modal}
\frac{\dd \vc{\tilde{u}}^{(h,\kappa,e)}(t)}{\dd t} = \big(\mat{\tilde{M}}^{(\kappa)}\big)^{-1}\mat{V}^\T\vc{r}^{(h,\kappa,e)}(t),
\end{equation}
where we can exploit sum factorization in the application of the operators $\mat{V}$ and $\mat{V}^\T$ in \eqref{eq:weight_adjusted_inverse}, as discussed in \cref{sec:modal_expansion}. While the formulation in \eqref{eq:dudt_modal} is not, in general, discretely conservative with respect to the quadrature rule defined by the diagonal entries of $\mat{W}$ as in \eqref{eq:sbp_quadrature}, we restore conservation using a technique proposed by Chan and Wilcox \cite[Lemma 2]{chan_wilcox_entropystable_curvilinear_19}. In the context of a mapping in the form of \eqref{eq:poly_mapping}, such a modification involves approximating the determinant of the mapping Jacobian, which is of degree $2p_g - 2$ in two dimensions and $3p_g - 3$ in three dimensions, by an interpolant of degree $p_g$ given in terms of the nodal basis used in \eqref{eq:poly_mapping} as
\begin{equation}\label{eq:jacobian_approx}
\fn{J}^{(\kappa)}(\vec{\xi}) \df \sum_{i=1}^{\ngeom} \operatorname{det}(\nabla_{\vec{\xi}}\fvec{x}^{(\kappa)}(\vec{\xi}_{p_g}^{(i)})) \ell_{p_g}^{(i)}(\vec{\xi}),
\end{equation}
and using such an approximation to define $\mat{J}^{(\kappa)}$ in \eqref{eq:weight_adjusted_inverse}, noting that such a modification does not affect the (approximate) metric terms $\vc{\vec{G}}^{(\kappa)}(\vec{\xi})$ used to compute the right-hand side of \eqref{eq:modal}.

\subsection{Entropy-stable flux-differencing formulation}\label{sec:entropy_stable}
In this section, we will develop a formulation of the entropy-stable modal approach introduced by Chan \cite{chan_discretely_entropy_conservative_dg_sbp_18} which lends itself to an efficient implementation when used with the tensor-product operators on triangles and tetrahedra described in \cref{sec:tensor_sbp}. We begin by considering a fundamental issue in the development of entropy-stable modal DSEMs, which is the fact that the entropy variables may not lie within approximation space in which the solution is sought. As such, they cannot be taken as test functions in the discrete variational formulation, a critical step in establishing entropy stability for such schemes. To resolve this, entropy-stable modal formulations employ an \emph{entropy projection}, referring to the approximation of the entropy variables as
\begin{equation}
\mathcal{W}_e(\vc{\fn{u}}^{(h,\kappa)}(\vec{\xi},t)) \approx \sum_{i=1}^{\npoly} \tilde{w}_i^{(h,\kappa,e)}(t) \phi^{(i)}(\vec{\xi}),
\end{equation}
where, as in \cite[Eq.\ (31)]{chan_wilcox_entropystable_curvilinear_19}, we obtain the modal coefficients through a weight-adjusted projection as
\begin{equation}\label{eq:entropy_expansion}
\vc{\tilde{w}}^{(h,\kappa,e)}(t) \df \big(\mat{\tilde{M}}^{(\kappa)}\big)^{-1}\mat{V}^\T\mat{W}\mat{J}^{(\kappa)}\big[\mathcal{W}_e(\vc{u}_1^{(h,\kappa)}(t)),\ldots,\mathcal{W}_e(\vc{u}_{\nvolnodes}^{(h,\kappa)}(t))\big]^\T.
\end{equation}
Using the generalized Vandermonde matrix to evaluate the projected entropy variables at the volume quadrature nodes as $\vc{w}^{(h,\kappa,e)}(t) \df \mat{V}\vc{\tilde{w}}^{(h,\kappa,e)}(t)$, we then define
\begin{equation}\label{eq:entropy_variables}
\vc{w}_i^{(h,\kappa)}(t) \df \mqty[w_i^{(h,\kappa,1)}(t) \\ \vdots\\ w_i^{(h,\kappa,\ncons)}(t)], \quad \vc{w}_i^{(h,\kappa,\zeta)}(t) \df \mqty[[\mat{R}^{(\zeta)}\vc{w}^{(h,\kappa,1)}(t)]_i \\ \vdots \\ [\mat{R}^{(\zeta)}\vc{w}^{(h,\kappa,\ncons)}(t)]_i].
\end{equation}
Next, we present the following definition of an \emph{entropy-conservative two-point flux}, which is an essential component of the entropy-stable methods described in this work.
\begin{definition}\label{def:ec_flux}
A continuously differentiable bivariate function $\vc{\fn{F}}_m^\# : \Upsilon \times \Upsilon \to \mathbb{R}^{\ncons}$ is an \emph{entropy-conservative two-point flux} if it is symmetric with respect to its two arguments, consistent with the PDE in \eqref{eq:pde} such that $\vc{\fn{F}}_m^\#(\vc{U},\vc{U}) = \vc{\fn{f}}_m(\vc{U})$ for all $\vc{U} \in \Upsilon$, and satisfies
\begin{equation}\label{eq:tadmor}
\big(\vc{\mathcal{W}}(\vc{U}^+) - \vc{\mathcal{W}}(\vc{U}^-)\big)^\T \vc{\fn{F}}_m^\#(\vc{U}^-,\vc{U}^+) = \varPsi_m(\vc{\mathcal{W}}(\vc{U}^+)) - \varPsi_m(\vc{\mathcal{W}}(\vc{U}^-)), \quad \forall \, \vc{U}^-,  \vc{U}^+ \in \Upsilon.
\end{equation}
\end{definition}
First proposed in \cite{tadmor_entropy_stable_fv_87}, the property in \eqref{eq:tadmor} is referred to in the literature as \emph{Tadmor's condition} or the \emph{shuffle condition}, and enables the chain rule to be circumvented when deriving semi-discrete forms of bounds such as \eqref{eq:entropy_balance} in the context of an entropy-stable discretization. At element interfaces, we use \emph{entropy-stable} or \emph{entropy-conservative} directional numerical fluxes, for which the following definition is introduced (see, for example, \cite[Definitions 3.1 and 3.2]{chen_shu_entropy_stable_dgsbp_17}).
\begin{definition}\label{def:num_flux}
A directional numerical flux function $\vc{\fn{f}}^* : \Upsilon \times \Upsilon \times \mathbb{S}^{d-1} \to \mathbb{R}^{\nelem}$ is \emph{entropy stable} if, for any direction $\vec{n} \in \mathbb{S}^{d-1}$, it satisfies the \emph{conservation} and \emph{consistency} conditions given by
\begin{subequations}
\begin{alignat}{2}
\vc{\fn{f}}^*(\vc{U}^-,\vc{U}^+,\vec{n}) &= - \vc{\fn{f}}^*(\vc{U}^+,\vc{U}^-,-\vec{n}), \quad &&\forall \, \vc{U}^-,\vc{U}^+ \in \Upsilon,\label{eq:conservative_flux}\\
\vc{\fn{f}}^*(\vc{U},\vc{U},\vec{n}) &= \vc{\fn{f}}(\vc{U},\vec{n}), \quad &&\forall \, \vc{U} \in \Upsilon, \label{eq:consistent_flux}
\end{alignat}
\end{subequations}
respectively, as well as the \emph{entropy condition}, which is defined analogously to \eqref{eq:tadmor} by
\begin{equation}\label{eq:tadmor_directional}
\big(\vc{\mathcal{W}}(\vc{U}^+) - \vc{\mathcal{W}}(\vc{U}^-)\big)^\T \vc{\fn{F}}^*(\vc{U}^-,\vc{U}^+, \vec{n}) \leq \big(\vec{\varPsi}(\vc{\mathcal{W}}(\vc{U}^+)) - \vec{\varPsi}(\vc{\mathcal{W}}(\vc{U}^-))\big) \cdot \vec{n}, \quad \forall \, \vc{U}^-,\vc{U}^+ \in \Upsilon.
\end{equation}
Such a numerical flux is \emph{entropy conservative} if \eqref{eq:tadmor_directional} holds as an equality for all arguments.
\end{definition}
For the numerical experiments which we will present in \cref{sec:numerical}, we employ an interface flux consisting of an entropy-conservative two-point flux in the normal direction augmented by a local Lax--Friedrichs dissipative term (see, for example, Ranocha \cite[Section 6.1]{ranocha_comparison_entropy_conservative_fluxes_18}). Such a flux takes the form
\begin{equation}\label{eq:num_flux}
\vc{\fn{f}}^*(\vc{\fn{u}}^-, \vc{\fn{u}}^+, \vec{n}) \df \vc{\fn{f}}^\#(\vc{\fn{u}}^-, \vc{\fn{u}}^+, \vec{n}) - \frac{1}{2}\lambda(\vc{\fn{u}}^-, \vc{\fn{u}}^+, \vec{n})(\vc{\fn{u}}^+ - \vc{\fn{u}}^-),
\end{equation}
where the first term on the right-hand side is an entropy-conservative directional flux given by 
\begin{equation}
\vc{\fn{f}}^\#(\vc{\fn{u}}^-, \vc{\fn{u}}^+, \vec{n}) \df \sum_{m=1}^d n_m \vc{\fn{f}}_m^\#(\vc{\fn{u}}^-, \vc{\fn{u}}^+),
\end{equation}
and $\lambda(\vc{\fn{u}}^-, \vc{\fn{u}}^+, \vec{n}) \in \mathbb{R}^+$ is an estimate of the maximum wave speed in the normal direction. Alternatively, one could use entropy-stable matrix dissipation terms such as those described by Ismail and Roe \cite{ismail_roe_ec_flux_09} and Winters \etal \cite{winters_matrix_diss_17} or scalar dissipation terms based on the jumps in the entropy variables (see, for example, Chandrashekar \cite{chandrashekar_kep_entropy_fv_13}).
\par
In order to construct an expression for $\vc{r}^{(h,\kappa,e)}(t)$ in \eqref{eq:dudt_modal} which results in an entropy-stable scheme, we require the evaluation of the averaged volume and surface metric terms as
\begin{subequations}
\begin{align}
\avg{G_{lm}^{(\kappa)}}_{ij} &\df \frac{1}{2}\big[\vc{\vec{G}}^{(\kappa)}(\vec{\xi}^{(i)}) + \vc{\vec{G}}^{(\kappa)}(\vec{\xi}^{(j)})\big]_{lm},\label{eq:avg_vol_metrics}\\
\avg{\fn{J}^{(\kappa,\zeta)}n_m^{(\kappa,\zeta)}}_{ij} &\df \frac{1}{2}\big[\vc{\vec{G}}^{(\kappa)}(\vec{\xi}^{(i)})^\T\hat{\vec{n}}^{(\zeta)} + \vc{\vec{G}}^{(\kappa)}(\vec{\xi}^{(\zeta,j)})^\T\hat{\vec{n}}^{(\zeta)}\big]_m,\label{eq:avg_fac_metrics}
\end{align}
\end{subequations}
which define the matrices $\avg{G_{lm}^{(\kappa)}} \in \mathbb{R}^{\nvolnodes \times \nvolnodes}$ and $\avg{\fn{J}^{(\kappa,\zeta)}n_m^{(\kappa,\zeta)}} \in \mathbb{R}^{\nvolnodes \times \nfacnodes}$, respectively. The entropy-conservative two-point flux functions are then computed using the \emph{entropy-projected conservative variables}, a term introduced in \cite{chan_discretely_entropy_conservative_dg_sbp_18} in reference to the conservative variables evaluated in terms of the projected entropy variables in \eqref{eq:entropy_expansion}, as given by
\begin{subequations}
\begin{align}
F_{ij}^{(\kappa,m,e)}(t) &\df \fn{f}_{me}^\#(\vc{\mathcal{U}}(\vc{w}_i^{(h,\kappa)}(t)),\vc{\mathcal{U}}(\vc{w}_j^{(h,\kappa)}(t))),\label{eq:two_point_flux_volume}\\
F_{ij}^{(\kappa,\zeta,m,e)}(t) &\df \fn{F}_{me}^\#(\vc{\mathcal{U}}(\vc{w}_i^{(h,\kappa)}(t)),\vc{\mathcal{U}}(\vc{w}_j^{(h,\kappa,\zeta)}(t))),\label{eq:two_point_flux_facet}
\end{align}
\end{subequations}
which define the matrices $\mat{F}^{(\kappa,m,e)}(t) \in \mathbb{R}^{\nvolnodes \times \nvolnodes}$ and $\mat{F}^{(\kappa,\zeta,m,e)}(t) \in \mathbb{R}^{\nvolnodes \times \nfacnodes}$. Defining the exterior values of the entropy variables as $\vc{w}_i^{(h,\kappa,\zeta,+)}(t) \in \mathbb{R}^{\ncons}$, we likewise evaluate $\vc{f}^{(*,\kappa,\zeta,e)}(t) \in \mathbb{R}^{\nfacnodes}$ as
\begin{equation}\label{eq:interface_flux}
f_i^{(*,\kappa,\zeta,e)}(t) \df \fn{f}_e^*\big(\vc{\mathcal{U}}(\vc{w}_i^{(h,\kappa,\zeta)}(t)),\, \vc{\mathcal{U}}(\vc{w}_i^{(h,\kappa,\zeta,+)}(t)),\, \vec{n}^{(\kappa,\zeta)}(\fvec{x}^{(\kappa)}(\vec{\xi}^{(\zeta,i)}))\big).
\end{equation}
Having introduced the essential components of the scheme, flux-differencing weight-adjusted modal DSEM is now obtained by computing the time derivative as in \eqref{eq:dudt_modal}, with the nodal right-hand side computed as
\begin{equation}\label{eq:entropy_stable}
\begin{aligned}
\vc{r}^{(h,\kappa,e)}(t) \df  &- \sum_{l=1}^d \bigg(2\mat{S}^{(l)} \odot \sum_{m=1}^d\avg{G_{lm}^{(\kappa)}} \odot \mat{F}^{(\kappa,m,e)}(t)\bigg) \vc{1}^{(\nvolnodes)} \\ &- \sum_{\zeta=1}^{\nfac} \bigg(\mat{C}^{(\kappa,\zeta,e)}(t)\vc{1}^{(\nfacnodes)}  + \big(\mat{R}^{(\zeta)}\big)^\T\Big(\mat{B}^{(\zeta)}\mat{J}^{(\kappa,\zeta)}\vc{f}^{(*,\kappa,\zeta,e)}(t) - \big(\mat{C}^{(\kappa,\zeta,e)}(t)\big)^\T\vc{1}^{(\nvolnodes)}\Big)\bigg),
\end{aligned}
\end{equation}
where $\odot$ denotes the Hadamard product given by $[\mat{A} \odot \mat{B}]_{ij} \df A_{ij}B_{ij}$, and we define
\begin{equation}\label{eq:facet_correction}
\mat{C}^{(\kappa,\zeta,e)}(t) \df \big(\mat{R}^{(\zeta)}\big)^\T\mat{B}^{(\zeta)} \odot \sum_{m=1}^d\avg{\fn{J}^{(\kappa,\zeta)}n_m^{(\kappa,\zeta)}} \odot \mat{F}^{(\kappa,\zeta,m,e)}(t).
\end{equation}
The above formulation can be used with any set of diagonal-norm SBP operators for which the boundary matrices can be decomposed as in \eqref{eq:e_decomp}, and it will be shown in \cref{sec:analysis} that the formulation is mathematically equivalent to that in \cite[Eq.\ (35)]{chan_wilcox_entropystable_curvilinear_19} when the same SBP operators are used in both schemes, a fact which we will exploit in our analysis of conservation, free-stream preservation, and entropy stability.
\begin{remark}\label{rmk:diag_e}
When diagonal-E operators are used, the terms in \eqref{eq:entropy_stable} involving the correction operator $\mat{C}^{(\kappa,\zeta,e)}(t)$ in \eqref{eq:facet_correction} cancel, and hence the scheme simplifies to
\begin{equation}\label{eq:diag_e_entropy_stable}
\vc{r}^{(h,\kappa,e)}(t) =  - \sum_{l=1}^d \bigg(2\mat{S}^{(l)} \odot \sum_{m=1}^d\avg{G_{lm}^{(\kappa)}} \odot \mat{F}^{(\kappa,m,e)}(t)\bigg) \vc{1}^{(\nvolnodes)} -\sum_{\zeta=1}^{\nfac}\big(\mat{R}^{(\zeta)}\big)^\T\mat{B}^{(\zeta)}\mat{J}^{(\kappa,\zeta)}\vc{f}^{(*,\kappa,\zeta,e)}(t),
\end{equation}
a formulation which Ranocha \etal employ in \cite[Section 2.1]{ranocha_efficient_implementation_23} to obtain highly efficient algorithms using tensor-product LGL quadrature rules on curved quadrilateral and hexahedral elements. However, since the tensor-product SBP operators on the reference triangle and tetrahedron introduced in \cref{sec:tensor_sbp} are not diagonal-E operators, the proposed schemes require the evaluation of the full right-hand side in \eqref{eq:entropy_stable}, including the facet correction, the efficient implementation of which will be discussed in the following section.
\end{remark}

\section{Efficient implementation}\label{sec:implementation}
In this section, we discuss and analyze several important algorithmic considerations pertaining to the implementation of the proposed schemes, particularly regarding techniques for exploiting the sparsity and tensor-product structure of the SBP operators described in \cref{sec:tensor_sbp} within the context of an entropy-stable flux-differencing DSEM. The algorithms described here are implemented within the open-source Julia code \texttt{StableSpectralElements.jl} developed by the first author.\footnote{\texttt{StableSpectralElements.jl} is available at \url{https://github.com/tristanmontoya/StableSpectralElements.jl}.} 
\subsection{Exploiting operator sparsity in flux differencing}
As discussed, for example, by Ranocha \etal \cite[Figure 3]{ranocha_efficient_implementation_23}, the cost of an entropy-stable scheme is dominated by the flux-differencing terms, for which the primary expense is the evaluation of two-point entropy-conservative flux functions between pairs of quadrature nodes. By rewriting the volume contributions appearing in the first term on the right-hand side of \eqref{eq:entropy_stable} or \eqref{eq:diag_e_entropy_stable} as
\begin{equation}\label{eq:flux_differencing}
\bigg[\Big(2\mat{S}^{(l)} \odot \sum_{m=1}^d\avg{G_{lm}^{(\kappa)}} \odot \mat{F}^{(\kappa,m,e)}(t)\Big) \vc{1}^{(\nvolnodes)}\bigg]_i  = \sum_{j = 1}^{\nvolnodes} S_{ij}^{(l)} \fn{f}_e^\#\big(\vc{\mathcal{U}}(\vc{w}_i^{(h,\kappa)}(t)),\, \vc{\mathcal{U}}(\vc{w}_j^{(h,\kappa)}(t)),\, \avg{2\vec{g}^{(\kappa,l)}}_{ij}\big),
\end{equation}
we observe, as noted in \cite[Section 2.2]{ranocha_efficient_implementation_23}, that due to the symmetry of $\avg{G_{lm}^{(\kappa)}} \odot \mat{F}^{(\kappa,m,e)}(t)$ and the skew-symmetry of $\mat{S}^{(l)}$, it is only necessary to iterate over the indices $i$ and $j$ corresponding to the strictly upper-triangular parts of such matrices. Moreover, the sum need only be taken over the indices for which $S_{ij}^{(l)} \neq 0$, and the corresponding values of the vector
\begin{equation}\label{eq:avg_volume_metric}
\avg{2\vec{g}^{(\kappa,l)}}_{ij} \df \big[\fn{g}_{l1}^{(\kappa)}(\vec{\xi}^{(i)}) +  \fn{g}_{l1}^{(\kappa)}(\vec{\xi}^{(j)}), \ldots, \fn{g}_{ld}^{(\kappa)}(\vec{\xi}^{(i)}) +  \fn{g}_{ld}^{(\kappa)}(\vec{\xi}^{(j)})\big]^\T
\end{equation}
within the directional two-point flux can be computed on the fly in order to avoid storing the dense matrices $\avg{G_{lm}^{(\kappa)}}$ in memory. Computing the sum on the right-hand side of \eqref{eq:flux_differencing} for all $i \in \{1:\nvolnodes\}$ therefore requires the evaluation of the two-point flux function, which for entropy-stable schemes is typically a relatively expensive operation involving the logarithmic mean, once per nonzero entry in the strictly upper-triangular part of $\mat{S}^{(l)}$, with the total number of required floating-point operations being proportional to such a quantity. Similarly, expressing the averaged metric terms in \eqref{eq:avg_fac_metrics} as
\begin{equation}\label{eq:avg_facet_metric}
\avg{\fn{J}^{(\kappa,\zeta)}\vec{n}^{(\kappa,\zeta)}}_{ij} \df \big[\avg{\fn{J}^{(\kappa,\zeta)}n_1^{(\kappa,\zeta)}}_{ij}, \ldots, \avg{\fn{J}^{(\kappa,\zeta)}n_d^{(\kappa,\zeta)}}_{ij} \big]^\T,
\end{equation}
we can evaluate the contributions from $\mat{C}^{(\kappa,\zeta,e)}(t)\vc{1}^{(\nfacnodes)}$ and $(\mat{C}^{(\kappa,\zeta,e)}(t))^\T\vc{1}^{(\nvolnodes)}$ on the second line of \eqref{eq:entropy_stable} simultaneously by initializing both such vectors to zero and then iterating over values of $i$ and $j$ such that $[(\mat{R}^{(\zeta)})^\T\mat{B}^{(\zeta)}]_{ij} \neq 0$. Within each iteration, we compute the corresponding two-point flux and multiply each component by the corresponding nonzero matrix entry to obtain
\begin{equation}
C_{ij}^{(\kappa,\zeta,e)}(t) = \big[(\mat{R}^{(\zeta)})^\T\mat{B}^{(\zeta)}\big]_{ij} \fn{f}_e^\#\big(\vc{\mathcal{U}}(\vc{w}_i^{(h,\kappa)}(t)),\, \vc{\mathcal{U}}(\vc{w}_j^{(h,\kappa,\zeta)}(t)),\, \avg{\fn{J}^{(\kappa,\zeta)}\vec{n}^{(\kappa,\zeta)}}_{ij}\big).
\end{equation} 
Such values are then accumulated within $\mat{C}^{(\kappa,\zeta,e)}(t)\vc{1}^{(\nfacnodes)}$ and $(\mat{C}^{(\kappa,\zeta,e)}(t))^\T\vc{1}^{(\nvolnodes)}$ as
\begin{subequations}
\begin{align}
\big[\mat{C}^{(\kappa,\zeta,e)}(t)\vc{1}^{(\nfacnodes)}\big]_i &\gets  \big[\mat{C}^{(\kappa,\zeta,e)}(t)\vc{1}^{(\nfacnodes)}\big]_i + C_{ij}^{(\kappa,\zeta,e)}(t), \\
\big[\big(\mat{C}^{(\kappa,\zeta,e)}(t)\big)^\T\vc{1}^{(\nvolnodes)}\big]_j &\gets  \big[\big(\mat{C}^{(\kappa,\zeta,e)}(t)\big)^\T\vc{1}^{(\nvolnodes)}\big]_j + C_{ij}^{(\kappa,\zeta,e)}(t)
\end{align}
\end{subequations}
before proceeding to the next nonzero entry of the matrix $(\mat{R}^{(\zeta)})^\T\mat{B}^{(\zeta)}$. \par 
Letting $\operatorname{nnz}(\mat{A})$ denote the number of nonzero entries in a given matrix $\mat{A}$, it follows from the above discussion that the number of element-local directional two-point flux evaluations (i.e.\ neglecting the numerical interface flux, which is computed in a separate routine) required to evaluate the right-hand side in \eqref{eq:entropy_stable} can be computed as 
\begin{equation}\label{eq:num_fluxes}
\text{Number of two-point flux evaluations} = \sum_{l=1}^d \frac{1}{2}\operatorname{nnz}\Big(\mat{S}^{(l)}\Big) + \sum_{\zeta=1}^{\nfac} \operatorname{nnz}\Big(\big(\mat{R}^{(\zeta)}\big)^\T\mat{B}^{(\zeta)}\Big).
\end{equation}
To the authors' knowledge, the matrices $\mat{S}^{(l)}$ are dense for all existing high-order entropy-stable discretizations on triangles or tetrahedra. Recalling from \cite{hicken_mdsbp_16} and \cite{delrey_mdsbp_sat_18} that the minimum number of volume quadrature nodes for an SBP operator of degree $q$ is the dimension $N_q^*$ of the associated total-degree polynomial space, which scales as $\mathcal{O}(q^d)$, the number of two-point fluxes, and hence the computational work required to evaluate the flux-differencing volume terms, is therefore expected to scale as $\mathcal{O}(q^{2d})$. By contrast, such matrices are sparse for the operators described in \cref{sec:tensor_sbp}, with their one-dimensional coupling along lines of nodes resulting in the same $\mathcal{O}(q^{d+1})$ complexity as for tensor-product DSEMs on quadrilaterals or hexahedra. 
\begin{remark}
While the second term in \eqref{eq:num_fluxes} vanishes for diagonal-E multidimensional SBP operators due to the simplifications made in \eqref{eq:diag_e_entropy_stable}, such operators require quadrature rules using a much larger number of nodes for a given degree than would otherwise be needed, and are currently only available for modest polynomial degrees.\footnote{To the authors' knowledge, the highest-order diagonal-E SBP operators on tetrahedra are those recently proposed by Worku \etal \cite{worku_quadrature_sbp_23}, who provide quadrature rules of up to degree 10 suitable for SBP operators of up to degree 5.} For more general non-tensor-product quadrature rules without collocated facet nodes, the matrices $(\mat{R}^{(\zeta)})^\T\mat{B}^{(\zeta)}$ are also dense, coupling every volume quadrature node to every facet quadrature node. 
\end{remark}
\begin{remark}
It may be possible to achieve a further reduction in computational expense by arranging the flux-differencing loops so as to iterate along lines of nodes in the $\eta_1$, $\eta_2$, and $\eta_3$ directions instead of over nonzero entries in the operators $\mat{S}^{(l)}$ and $(\mat{R}^{(\zeta)})^\T\mat{B}^{(\zeta)}$ on the reference simplex, thereby performing flux differencing directly within the collapsed coordinate system. Such additional optimizations would neither alter the underlying numerical methods nor affect the algorithms' asymptotic complexity, and will be investigated in future work focusing on the implementation and performance aspects of entropy-stable schemes on triangular and tetrahedral elements using tensor-product as well as multidimensional SBP operators.
\end{remark}

\subsection{Exploiting sum factorization for tensor-product operators}
In addition to the reduction in computational complexity of the flux-differencing terms, the algorithmic benefits of tensor-product operators discussed in \cite[Section 6]{montoya_sem_23} regarding the skew-symmetric scheme in \eqref{eq:rhs_skew} extend directly to the matrix-vector products in the proposed entropy-stable formulations, which involve $\mat{V}$ and $\mat{R}^{(\zeta)}$ as well as their transposes. As such, we are able to exploit the structure of such matrices through sum factorization in the evaluation of the conservative variables at the volume and facet quadrature nodes as
\begin{subequations}\label{eq:matvec_cons_vars}
\begin{align}
\vc{u}^{(h,\kappa,e)}(t) & = \mat{V}\vc{\tilde{u}}^{(h,\kappa,e)}(t), \quad\\
\vc{u}^{(h,\kappa,\zeta, e)}(t) & = \mat{R}^{(\zeta)}\vc{u}^{(h,\kappa,e)}(t), \quad \forall \, \zeta \in \{1:\nfac\},
\end{align}
\end{subequations}
as well as in the weight-adjusted projection of the entropy variables as
\begin{subequations}\label{eq:matvec_entropy_projection}
\begin{align}
\vc{\tilde{w}}^{(h,\kappa,e)}(t) & = \mat{M}^{-1}\mat{V}^\T\mat{W}\big(\mat{J}^{(\kappa)}\big)^{-1} \mat{V}\mat{M}^{-1}\mat{V}^\T\mat{W}\mat{J}^{(\kappa)}\big[\mathcal{W}_e(\vc{u}_1^{(h,\kappa)}(t)),\ldots,\mathcal{W}_e(\vc{u}_{\nvolnodes}^{(h,\kappa)}(t))\big]^\T,\\
\vc{w}^{(h,\kappa, e)}(t) & = \mat{V}\vc{\tilde{w}}^{(h,\kappa,e)}(t),\\
\vc{w}^{(h,\kappa,\zeta, e)}(t) & = \mat{R}^{(\zeta)}\vc{w}^{(h,\kappa,e)}(t), \quad \forall \, \zeta \in \{1:\nfac\},
\end{align}
\end{subequations}
where we also exploit the fact that $\mat{M}^{-1}$ is the identity matrix due to the PKD basis remaining orthonormal under all quadrature rules considered in this work. Furthermore, we employ sum factorization when applying $(\mat{R}^{(\zeta)})^\T$ to obtain $\vc{r}^{(h,\kappa,e)}(t)$ in \eqref{eq:entropy_stable}, and, finally, in the evaluation of the time derivative in \eqref{eq:dudt_modal} using the weight-adjusted inverse as
\begin{equation}\label{eq:matvec_dudt}
\frac{\dd \vc{\tilde{u}}^{(h,\kappa,e)}(t)}{\dd t} = \mat{M}^{-1}\mat{V}^\T\mat{W}\big(\mat{J}^{(\kappa)}\big)^{-1} \mat{V}\mat{M}^{-1} \mat{V}^\T\vc{r}^{(h,\kappa,e)}(t).
\end{equation}
Such an approach results in the entire algorithm for computing the local time derivative requiring $\mathcal{O}(p^{d+1})$ floating-point operations under the standard assumption that $q$ scales as $\mathcal{O}(p)$ with the polynomial  degree $p$ of the modal expansion. To the authors' knowledge, this is not achieved by any prior entropy-stable method on triangles or tetrahedra, for which the required dense matrix operations are of $\mathcal{O}(p^{2d})$ complexity. Moreover, by avoiding the construction of physical operator matrices and averaging the metric terms on the fly, memory usage is minimized, with per-element memory requirements scaling as $\mathcal{O}(p^{d})$ due to the only necessary storage being geometric information at each volume and facet quadrature node, and, optionally, precomputed diagonal entries of the matrices $\mat{W}(\mat{J}^{(\kappa)})^{-1}$, $\mat{W}\mat{J}^{(\kappa)}$, and $\mat{B}^{(\zeta)}\mat{J}^{(\kappa,\zeta)}$. 
\subsection{Comparison to multidimensional summation-by-parts operators}
To make the above discussion more quantitative, we now analyze the number of two-point flux evaluations using \eqref{eq:num_fluxes} for specific SBP operators of varying polynomial degrees, where we compare our sparse tensor-product operators on the reference triangle  and tetrahedron to those constructed using symmetric quadrature rules as described by Chan \cite[Lemma 1]{chan_discretely_entropy_conservative_dg_sbp_18}, for which the matrices $\mat{S}^{(l)}$ and $(\mat{R}^{(\zeta)})^\T\mat{B}^{(\zeta)}$ are dense (i.e.\ they are not diagonal-E operators). 
We designate the latter class of operator as \emph{multidimensional} to distinguish them from the \emph{tensor-product} operators on the reference triangle and tetrahedron described in \cref{sec:tensor_sbp}, for which the volume quadrature rules are shown in \cref{fig:tensor_nodes}. On the triangle, we construct such multidimensional SBP operators using $2q$ Xiao-Gimbutas quadrature rules \cite{xiao_gimbutas_quadrature_10} for volume integration and degree $2q+1$ LG quadrature rules for facet integration. On the tetrahedron, the multidimensional SBP operators are constructed using degree $2q$ Ja\'skowiec-Sukumar quadrature rules \cite{jaskowiec_sukumar_symmetric_cubature_21} for volume integration and degree $2q$ Xiao-Gimbutas triangular quadrature rules for facet integration. Examples of such quadrature nodes are shown in \cref{fig:multi_nodes}, where we note for the multidimensional operators that the symmetric quadrature rules used for volume integration on the triangle are also used to integrate over each face of the tetrahedron. \par 
\begin{figure}[t!]
\centering
\begin{subfigure}{0.50\textwidth}
\centering
\includegraphics[height=42mm]{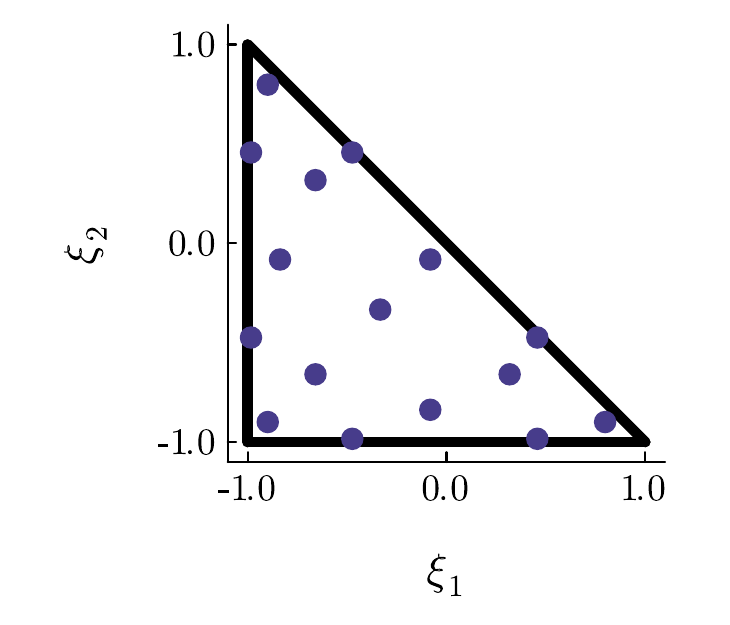}
\caption{Multidimensional quadrature nodes on the triangle}
\end{subfigure}~
\begin{subfigure}{0.48\textwidth}
\centering
\includegraphics[height=42mm]{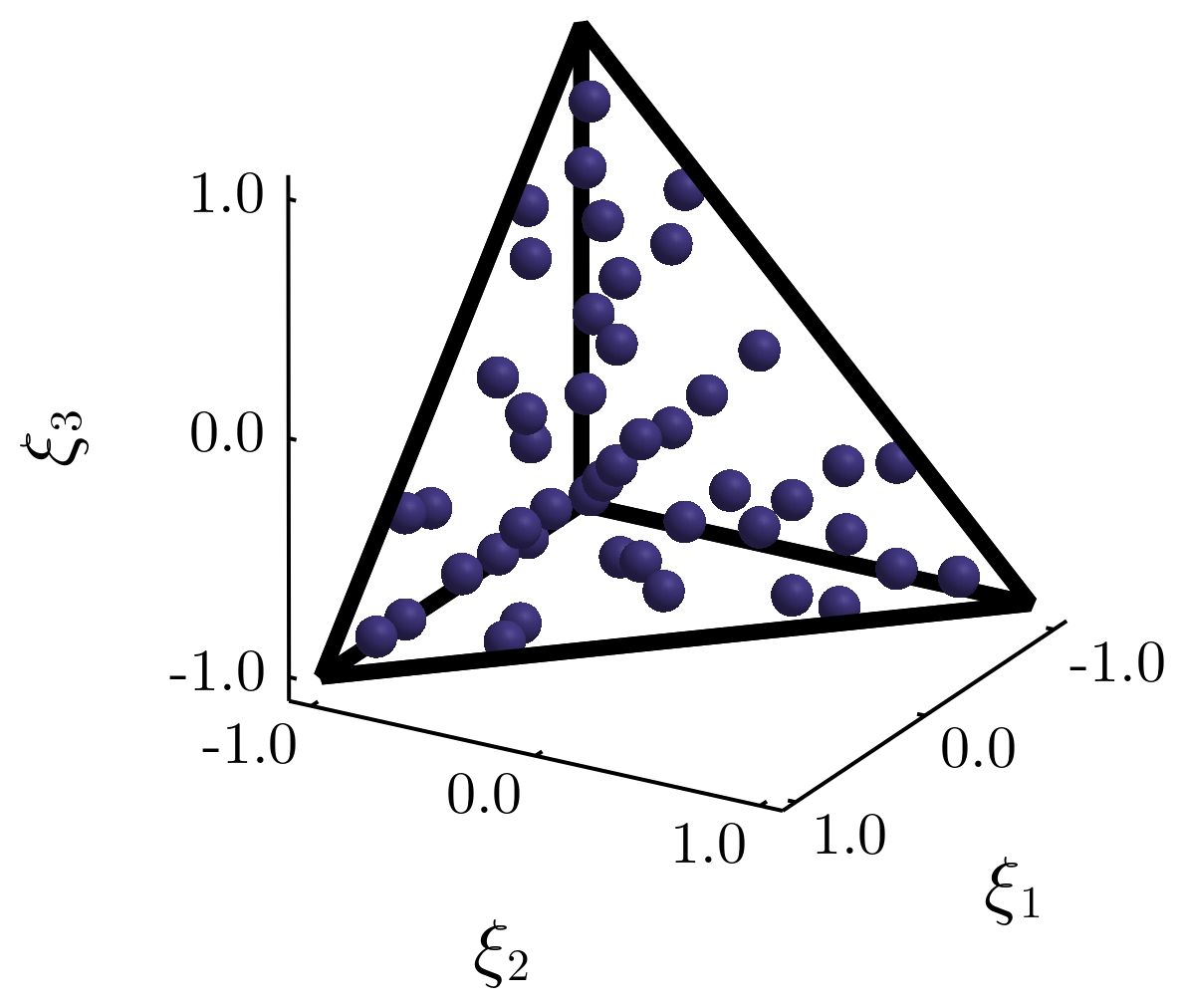}
\caption{Multidimensional quadrature nodes on the tetrahedron}
\end{subfigure}
\caption{Symmetric quadrature nodes for multidimensional SBP operators of degree $q = 4$ on the triangle and tetrahedron}\label{fig:multi_nodes}
\end{figure}
In \cref{fig:two_point_fluxes}, we plot the number of required two-point flux evaluations given by \eqref{eq:num_fluxes} for each class of operator over a range of polynomial degrees from $q=2$ to $q=15$, with the exception of the multidimensional operators on tetrahedra, for which suitable symmetric quadrature rules are only currently available, to the authors' knowledge, for SBP operators of degree $q \leq 10$. Since dense matrix storage is used in our implementation of the multidimensional operators, the only zero entries considered in \eqref{eq:num_fluxes} for such operators are those along the main diagonal of the skew-symmetric matrix $\mat{S}^{(l)}$, although we observe numerically that a small fraction of the off-diagonal entries are, in fact, on the order of machine precision. The scaling in \cref{fig:two_point_fluxes} is observed to be slightly better than the asymptotic estimates for both classes of operators, and the number of two-point flux evaluations required for the proposed tensor-product approach is smaller than that required when using multidimensional SBP operators for all polynomial degrees considered. As expected, this discrepancy increases with the polynomial degree; for example, the number of two-point flux evaluations is reduced by factors of 1.56 at $q = 2$, 2.78 at $q=5$, and 4.57 at $q=10$ when using tensor-product operators on triangles, and by factors of 1.88 at $q=2$, 3.44 at $q=5$, and 10.99 at $q=10$ when using tensor-product operators on tetrahedra. We also consider the total number of floating-point operations per variable incurred in evaluating the matrix-vector products in \eqref{eq:entropy_stable}, \eqref{eq:matvec_cons_vars}, \eqref{eq:matvec_entropy_projection}, and \eqref{eq:matvec_dudt} on a given element, where we take $p=q$ in all cases. The results of such an analysis, which are displayed in \cref{fig:matrix_ops}, are qualitatively similar to those in \cref{fig:two_point_fluxes} for higher polynomial degrees, although the benefit of the tensor-product operators for low polynomial degrees is less substantial in this regard, requiring roughly the same number of floating-point operations as the multidimensional operators, for example, at $p=2$. \par
There are several caveats which must be addressed regarding the preceding discussion and analysis. First, we note that due to their use of a larger number of volume and facet quadrature nodes than typical multidimensional operators of the same degree, the tensor-product operators require a somewhat greater number of conversions between conservative and entropy variables as well as a somewhat larger number of numerical interface flux evaluations. These operations do not, however, typically constitute the most significant contribution to the overall expense of an entropy-stable scheme and incur a cost which grows more slowly with the polynomial degree than that of the flux-differencing terms. Second, we recognize that comparisons on the basis of floating-point operation count, while more objective than implementation-specific and hardware-specific timing comparisons, are only truly representative of computational cost for compute-bound algorithms. For lower polynomial degrees, algorithm performance is often substantially limited by memory bandwidth, and hence the tensor-product operators may not necessarily outperform their multidimensional counterparts in practice for such regimes. However, due to the arithmetic intensity of an SEM increasing with the polynomial degree of the discretization (see, for example, the roofline analysis in \cite{moxey_matrix_free_triangles_20}), floating-point operation count indeed becomes a relevant measure at higher polynomial degrees, which is precisely where the proposed tensor-product approach shows significant cost savings. The point at which such benefits become substantial is dependent on the specifics of the implementation and hardware (e.g.\ considering the memory access pattern, cache size, as well as the use of single-instruction-multiple-data vectorization or multithreading) as well as the PDE and choice of two-point flux, and is therefore an important topic of future investigation within the context of the high-performance implementation and evaluation of the proposed algorithms.
\begin{figure}[t!]
\centering
\begin{subfigure}{0.48\textwidth}
\centering
\includegraphics[height=42mm]{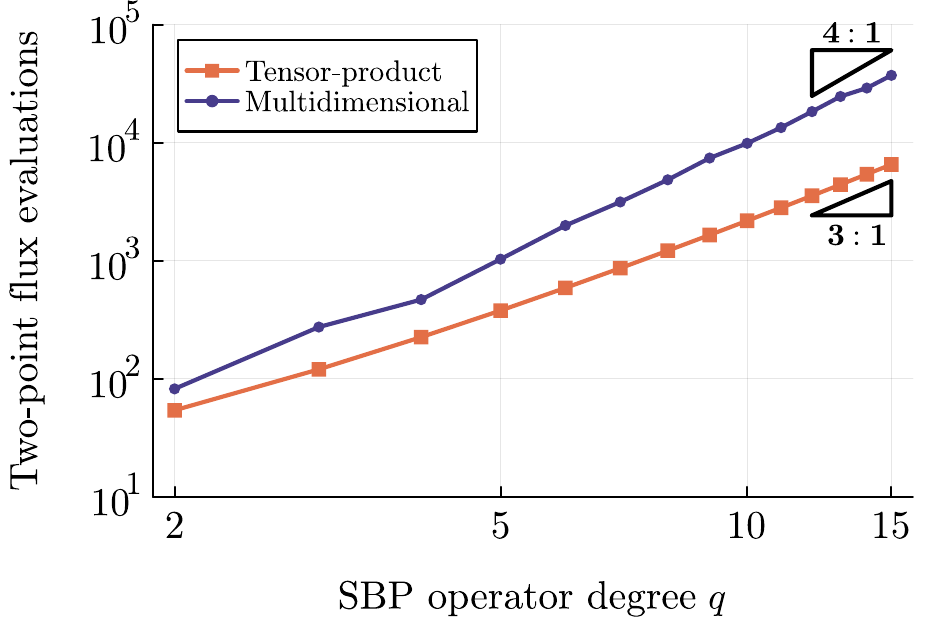}
\caption{Triangles}
\end{subfigure}~
\begin{subfigure}{0.48\textwidth}
\centering
\includegraphics[height=42mm]{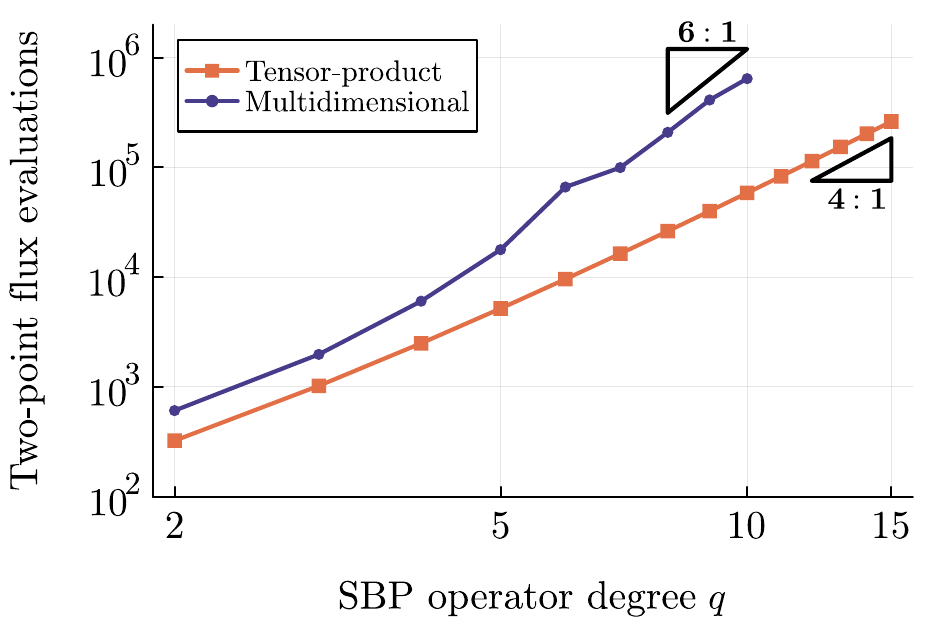}
\caption{Tetrahedra}
\end{subfigure}
\caption{Number of entropy-conservative two-point flux evaluations in local flux-differencing terms}\label{fig:two_point_fluxes}
\end{figure}
\begin{figure}[t!]
\begin{subfigure}{0.48\textwidth}
\centering
\includegraphics[height=42mm]{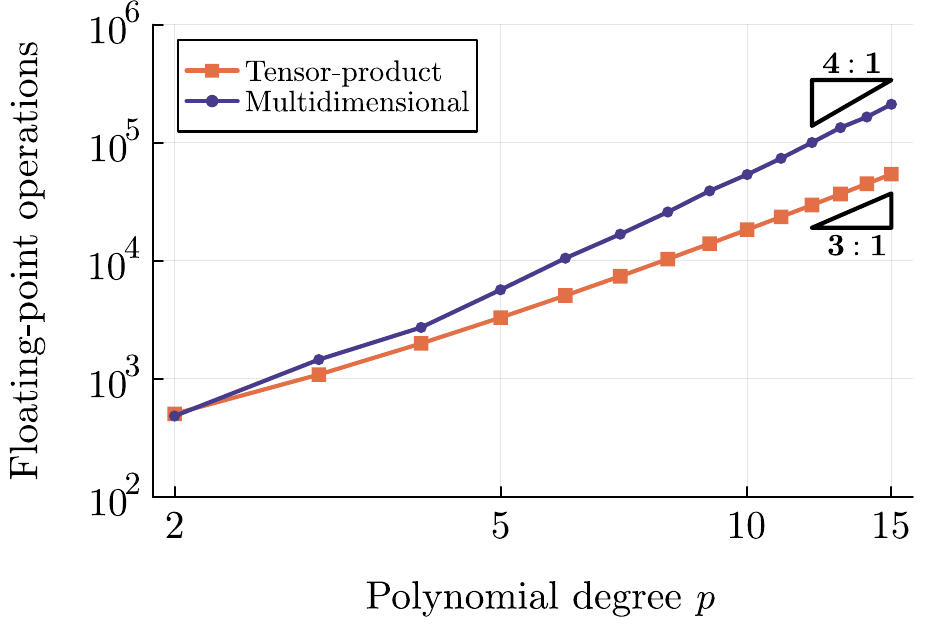}
\caption{Triangles}
\end{subfigure}~
\begin{subfigure}{0.48\textwidth}
\centering
\includegraphics[height=42mm]{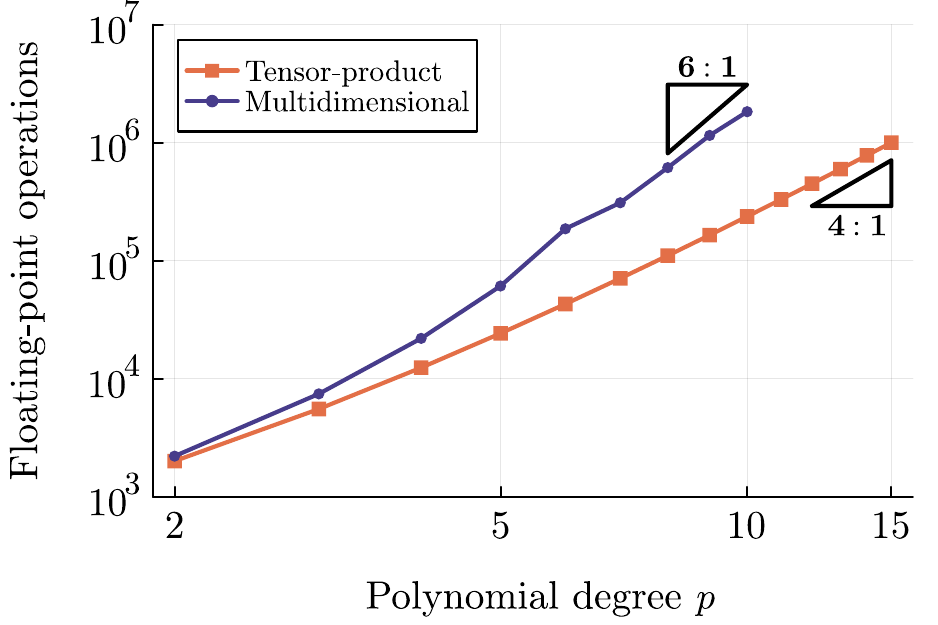}
\caption{Tetrahedra}
\end{subfigure}
\caption{Total number of floating-point operations per variable in local operator evaluation}\label{fig:matrix_ops}
\end{figure}

\section{Conservation, free-stream preservation, and entropy stability}\label{sec:analysis}
The previous section demonstrates that the use of sparse tensor-product operators in collapsed coordinates enables algorithmic improvements relative to comparable entropy-stable discretizations using multidimensional SBP operators, particularly for higher polynomial degrees. We will now demonstrate that the proposed schemes are conservative, free-stream preserving, and entropy stable due to their equivalence to formulations based on \emph{hybridized summation-by-parts operators}.\footnote{Such operators were originally introduced by Chan in \cite{chan_discretely_entropy_conservative_dg_sbp_18} as \emph{decoupled SBP operators}, with the term \emph{hybridized SBP operator} popularized following the review paper by Chen and Shu \cite{chen_shu_dgsbp_review_19}. We note, however, that this notion of ``hybridization'' should be distinguished from that associated with hybridized DG methods (see, for example, Cockburn \etal \cite{cockburn_hdg_09}), which exploit static condensation to reduce the number of coupled degrees of freedom.} The construction of such hybridized operators from SBP operators satisfying the conditions of \cref{def:sbp} is demonstrated with the following lemma.
\begin{lemma}\label{lem:hybridized_sbp_property}
Given any nodal SBP operator $\mat{D}^{(l)} \df \mat{W}^{-1}(\mat{S}^{(l)} + \tfrac{1}{2}\mat{E}^{(l)})$ on the reference element in the sense of \cref{def:sbp} for which $\mat{E}^{(l)}$ takes the form of \eqref{eq:e_decomp}, we can construct a block matrix
\begin{equation}\label{eq:q_hybridized}
\mat{\bar{Q}}^{(l)} \df \mqty[\mat{S}^{(l)}  &  \frac{1}{2}\hat{n}_l^{(1)}\big(\mat{R}^{(1)}\big)^\T\mat{B}^{(1)} & \cdots & \frac{1}{2}\hat{n}_l^{(\nfac)}\big(\mat{R}^{(\nfac)}\big)^\T\mat{B}^{(\nfac)} \\ -\frac{1}{2}\hat{n}_l^{(1)}\mat{B}^{(1)}\mat{R}^{(1)}  & \frac{1}{2}\hat{n}_l^{(1)}\mat{B}^{(1)}  &  & \\ \vdots  &  &  \ddots &  \\ -\frac{1}{2}\hat{n}_l^{(\nfac)}\mat{B}^{(\nfac)}\mat{R}^{(\nfac)}   &  &  & \frac{1}{2}\hat{n}_l^{(\nfac)}\mat{B}^{(\nfac)}  ],
\end{equation}
satisfying the following ``SBP-like'' property from \cite[Eq.\ (32)]{chan_discretely_entropy_conservative_dg_sbp_18}:
\begin{equation}\label{eq:sbp_hybridized}
\mat{\bar{Q}}^{(l)} + \big(\mat{\bar{Q}}^{(l)} \big)^\T = \mat{\bar{E}}^{(l)}, \quad \text{where} \quad \mat{\bar{E}}^{(l)}\df \mqty[\mat{0}^{(\nvolnodes \times \nvolnodes)} & & \\ & \hat{n}^{(1)}\mat{B}^{(1)} & \\ & & \ddots & \\ & & & \hat{n}^{(\nfac)}\mat{B}^{(\nfac)} ].
\end{equation}
\end{lemma}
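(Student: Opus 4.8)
The plan is to verify \eqref{eq:sbp_hybridized} directly, by computing $\mat{\bar{Q}}^{(l)} + (\mat{\bar{Q}}^{(l)})^\T$ block by block and matching the result against $\mat{\bar{E}}^{(l)}$. The only facts I would invoke are: (i) $\mat{S}^{(l)}$ is skew-symmetric, which is not an additional hypothesis but follows from \cref{def:sbp} via the decomposition $\mat{Q}^{(l)} = \mat{S}^{(l)} + \tfrac{1}{2}\mat{E}^{(l)}$ together with the SBP property $\mat{Q}^{(l)} + (\mat{Q}^{(l)})^\T = \mat{E}^{(l)}$; (ii) each $\mat{B}^{(\zeta)}$ is diagonal, hence symmetric, so that $(\mat{B}^{(\zeta)}\mat{R}^{(\zeta)})^\T = (\mat{R}^{(\zeta)})^\T\mat{B}^{(\zeta)}$; and (iii) the block-transposition rule, namely that the $(i,j)$ block of $(\mat{\bar{Q}}^{(l)})^\T$ is the transpose of the $(j,i)$ block of $\mat{\bar{Q}}^{(l)}$. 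The scalars $\hat{n}_l^{(\zeta)}$ play no role beyond factoring out of each block.

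Carrying this out, the leading $(1,1)$ block of the sum is $\mat{S}^{(l)} + (\mat{S}^{(l)})^\T = \mat{0}^{(\nvolnodes \times \nvolnodes)}$, matching the $(1,1)$ block of $\mat{\bar{E}}^{(l)}$. For each $\zeta \in \{1:\nfac\}$, the $(1,\zeta+1)$ block of the sum is $\tfrac{1}{2}\hat{n}_l^{(\zeta)}(\mat{R}^{(\zeta)})^\T\mat{B}^{(\zeta)}$ plus the transpose of the $(\zeta+1,1)$ block of $\mat{\bar{Q}}^{(l)}$, i.e.\ $-\tfrac{1}{2}\hat{n}_l^{(\zeta)}(\mat{R}^{(\zeta)})^\T\mat{B}^{(\zeta)}$, so these off-diagonal blocks vanish, as does the $(\zeta+1,1)$ block by the mirror-image computation. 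The diagonal facet block $(\zeta+1,\zeta+1)$ gives $\tfrac{1}{2}\hat{n}_l^{(\zeta)}\mat{B}^{(\zeta)} + \tfrac{1}{2}\hat{n}_l^{(\zeta)}\mat{B}^{(\zeta)} = \hat{n}_l^{(\zeta)}\mat{B}^{(\zeta)}$, which is exactly the corresponding block of $\mat{\bar{E}}^{(l)}$, and the off-diagonal facet blocks $(\zeta+1,\eta+1)$ with $\zeta\neq\eta$ are zero in both $\mat{\bar{Q}}^{(l)}$ and its transpose. Assembling these cases yields \eqref{eq:sbp_hybridized}.

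There is no real obstacle: the argument is pure bookkeeping once the block-transpose convention is fixed, and the only point that merits explicit mention is the derivation of skew-symmetry of $\mat{S}^{(l)}$ from \cref{def:sbp}, which is what allows the lemma to apply to \emph{any} diagonal-norm SBP operator whose boundary operator decomposes as in \eqref{eq:e_decomp} — in particular, both the tensor-product operators of \cref{sec:tensor_sbp} and the multidimensional operators based on symmetric quadrature rules.
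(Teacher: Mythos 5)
Your proposal is correct and follows essentially the same route as the paper's proof, which likewise observes that the $(1,1)$ block vanishes by skew-symmetry of $\mat{S}^{(l)}$, the off-diagonal blocks cancel because the first-row blocks are the negative transposes of the first-column blocks, and only the diagonal facet blocks survive. Your explicit note that skew-symmetry of $\mat{S}^{(l)}$ follows from the SBP property together with the symmetry of $\mat{E}^{(l)}$ is a valid (and slightly more self-contained) justification of a fact the paper takes as given from the decomposition stated after \cref{def:sbp}.
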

\begin{proof}
Aside from the fact that our notation separates the operators on individual facets, the result is identical to \cite[Theorem 1]{chan_skewsymmetric_modaldg_sbp_19}. The proof relies on the fact that when adding \eqref{eq:q_hybridized} to its transpose, the top-left block vanishes by the skew-symmetry of $\mat{S}^{(l)}$ and the off-diagonal blocks vanish due to the blocks of the first row being the negative transposes of those in the first column, and hence the diagonal matrix on the right-hand side of \eqref{eq:sbp_hybridized} is all that remains.
\end{proof}
The hybridized operators in \eqref{eq:q_hybridized} are of dimension $\bar{N}_q$ by $\bar{N}_q$, where $\bar{N}_q \df \nvolnodes + N_{q_f}^{(1)} + \cdots +  N_{q_f}^{(\nfac)}$. Within a flux-differencing formulation, they operate on block matrices of two-point fluxes given by
\begin{equation}\label{eq:flux_hybridized}
\mat{\bar{F}}^{(\kappa,m,e)}(t) \df \mqty[\mat{F}^{(\kappa,m,e)}(t) & \mat{F}^{(\kappa,1,m,e)}(t) & \cdots & \mat{F}^{(h,\nfac,m,e)}(t)\\ \mat{F}^{(\kappa,1,m,e)}(t) & \mat{F}^{(\kappa,1,1,m,e)}(t)  &  \cdots & \mat{F}^{(\kappa,1,\nfac,m,e)}(t)  \\ \vdots & \vdots  & \ddots  & \vdots  \\ \mat{F}^{(\kappa,\nfac,m,e)}(t)  & \mat{F}^{(\kappa,\nfac,1,m,e)}(t)  &  \cdots & \mat{F}^{(\kappa,\nfac,\nfac,m,e)}(t)],
\end{equation}
where the entries of the blocks $\mat{F}^{(\kappa,\zeta,\eta,m,e)}(t) \in \mathbb{R}^{\nfacnodes \times N_{q_f}^{(\eta)}}$ are given by
\begin{equation}
F_{ij}^{(\kappa,\zeta,\eta,m,e)}(t) \df \fn{F}_{me}^\#(\vc{\mathcal{U}}(\vc{w}_i^{(h,\kappa,\zeta)}(t)),\vc{\mathcal{U}}(\vc{w}_j^{(h,\kappa,\eta)}(t))).
\end{equation}
As in \cite{chan_wilcox_entropystable_curvilinear_19}, hybridized SBP operators on the physical element are constructed in split form as
\begin{equation}\label{eq:phys_hybridized}
\mat{\bar{Q}}^{(\kappa,m)} \df  \frac{1}{2} \sum_{l=1}^d\Big(\mat{\bar{Q}}^{(l)}\mat{\bar{G}}^{(\kappa,l,m)} + \mat{\bar{G}}^{(\kappa,l,m)}\mat{\bar{Q}}^{(l)}\Big),
\end{equation}
where the diagonal matrices of concatenated volume and facet metric terms are given by 
\begin{equation}
\mat{\bar{G}}^{(\kappa,l,m)} \df \mqty[\mat{G}^{(\kappa,l,m)} & & & \\ & \mat{G}^{(\kappa,1,l,m)} & &\\ & & \ddots & \\ & & & \mat{G}^{(\kappa,\nfac,l,m)}],
\end{equation}
with the blocks $\mat{G}^{(\kappa,\zeta,l,m)} \in \mathbb{R}^{\nfacnodes\times \nfacnodes}$ defined by $G_{ij}^{(\kappa,\zeta,l,m)} \df G_{lm}^{(\kappa)}(\vec{\xi}^{(\zeta,i)})\delta_{ij}$. The following lemma demonstrates the relation of such a split form to the metric-averaging approach used in \cref{sec:entropy_stable}.
\begin{lemma}\label{lem:split_equiv_to_averaging}
The split-form operator in \eqref{eq:phys_hybridized} can be rewritten as 
\begin{equation}\label{eq:split_to_avg}
\mat{\bar{Q}}^{(\kappa,m)} = \sum_{l=1}^d\mat{\bar{Q}}^{(l)} \odot \avg{\bar{G}_{lm}^{(\kappa)}},
\end{equation}
where the entries of $\avg{\bar{G}_{lm}^{(\kappa)}} \in \mathbb{R}^{\bar{N}_q \times \bar{N}_q}$ are given by $\avg{\bar{G}_{lm}^{(\kappa)}}_{ij}  \df \frac{1}{2}(\bar{G}_{ii}^{(\kappa,l,m)} + \bar{G}_{jj}^{(\kappa,l,m)})$. Additionally, an SBP-like property analogous to \eqref{eq:sbp_hybridized} is satisfied on the physical element, as given by
\begin{equation}\label{eq:hybridized_sbp_phys}
\mat{\bar{Q}}^{(\kappa,m)} + \big(\mat{\bar{Q}}^{(\kappa,m)}\big)^\T = \mqty[\mat{0}^{(\nvolnodes \times \nvolnodes)} & & & \\ & \mat{B}^{(1)}\mat{J}^{(\kappa,1)}\mat{N}^{(\kappa,1,m)} & & \\ & & \ddots & \\ & & & \mat{B}^{(\nfac)}\mat{J}^{(\kappa,\nfac)}\mat{N}^{(\kappa,\nfac,m)}], 
\end{equation}
provided that $\mat{J}^{(\kappa,\zeta)}$ and $\mat{N}^{(\kappa,\zeta,m)}$ are computed using \eqref{eq:nanson} based on the volume metric terms.
\end{lemma}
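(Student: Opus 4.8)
The plan is to establish the two assertions in turn, each following quickly from \cref{lem:hybridized_sbp_property} together with the fact that $\mat{\bar{G}}^{(\kappa,l,m)}$ is a diagonal, and hence symmetric, matrix. First, for the Hadamard-product identity \eqref{eq:split_to_avg}, I would note that for any matrix $\mat{A} \in \mathbb{R}^{\bar{N}_q \times \bar{N}_q}$ and any diagonal matrix $\mat{G} \in \mathbb{R}^{\bar{N}_q \times \bar{N}_q}$ with diagonal entries $g_i \df G_{ii}$, one has $[\mat{A}\mat{G}]_{ij} = A_{ij}g_j$ and $[\mat{G}\mat{A}]_{ij} = g_i A_{ij}$, so that $\tfrac{1}{2}[\mat{A}\mat{G} + \mat{G}\mat{A}]_{ij} = A_{ij}\cdot\tfrac{1}{2}(g_i + g_j)$. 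Applying this entrywise with $\mat{A} = \mat{\bar{Q}}^{(l)}$ and $\mat{G} = \mat{\bar{G}}^{(\kappa,l,m)}$, summing over $l$, and recognizing $\tfrac{1}{2}(\bar{G}_{ii}^{(\kappa,l,m)} + \bar{G}_{jj}^{(\kappa,l,m)})$ as the defining expression for $\avg{\bar{G}_{lm}^{(\kappa)}}_{ij}$, yields \eqref{eq:split_to_avg} directly from the definition \eqref{eq:phys_hybridized}.

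Second, for the SBP-like property \eqref{eq:hybridized_sbp_phys}, I would transpose \eqref{eq:phys_hybridized}, use the symmetry of $\mat{\bar{G}}^{(\kappa,l,m)}$, and regroup terms to obtain
\[
\mat{\bar{Q}}^{(\kappa,m)} + \big(\mat{\bar{Q}}^{(\kappa,m)}\big)^\T = \frac{1}{2}\sum_{l=1}^d\Big( \big(\mat{\bar{Q}}^{(l)} + (\mat{\bar{Q}}^{(l)})^\T\big)\mat{\bar{G}}^{(\kappa,l,m)} + \mat{\bar{G}}^{(\kappa,l,m)}\big(\mat{\bar{Q}}^{(l)} + (\mat{\bar{Q}}^{(l)})^\T\big)\Big).
\]
Substituting $\mat{\bar{Q}}^{(l)} + (\mat{\bar{Q}}^{(l)})^\T = \mat{\bar{E}}^{(l)}$ from \cref{lem:hybridized_sbp_property} and noting that $\mat{\bar{E}}^{(l)}$ and $\mat{\bar{G}}^{(\kappa,l,m)}$ are both diagonal and therefore commute, the right-hand side collapses to $\sum_{l=1}^d \mat{\bar{E}}^{(l)}\mat{\bar{G}}^{(\kappa,l,m)}$, which is block diagonal. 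Its leading $\nvolnodes \times \nvolnodes$ block vanishes since the corresponding block of $\mat{\bar{E}}^{(l)}$ is $\mat{0}^{(\nvolnodes \times \nvolnodes)}$, while the $\zeta$th facet block is diagonal with $i$th entry $\omega^{(\zeta,i)}\sum_{l=1}^d \hat{n}_l^{(\zeta)} G_{lm}^{(\kappa)}(\vec{\xi}^{(\zeta,i)})$.

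The one step requiring care — and the closest thing to an obstacle — is identifying this last quantity with $[\mat{B}^{(\zeta)}\mat{J}^{(\kappa,\zeta)}\mat{N}^{(\kappa,\zeta,m)}]_{ii}$. For this I would invoke the Nanson relation \eqref{eq:nanson}, which gives $\fn{J}^{(\kappa,\zeta)}(\vec{\xi})\,n_m^{(\kappa,\zeta)}(\fvec{x}^{(\kappa)}(\vec{\xi})) = [\vc{\vec{G}}^{(\kappa)}(\vec{\xi})^\T\hat{\vec{n}}^{(\zeta)}]_m = \sum_{l=1}^d \hat{n}_l^{(\zeta)} G_{lm}^{(\kappa)}(\vec{\xi})$; evaluating at $\vec{\xi} = \vec{\xi}^{(\zeta,i)}$ and comparing with the definitions of $\mat{J}^{(\kappa,\zeta)}$ and $\mat{N}^{(\kappa,\zeta,m)}$ in \eqref{eq:geometric_factors} then closes the argument. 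This is precisely where the hypothesis that the facet Jacobian and normal are computed from the volume metric terms $\vc{\vec{G}}^{(\kappa)}$ via \eqref{eq:nanson} enters, and the argument goes through verbatim whether $\vc{\vec{G}}^{(\kappa)}$ is the exact adjugate in two dimensions or the conservative-curl approximation \eqref{eq:conservative_curl} in three dimensions, since only consistency of the facet quantities with the volume metrics — not their polynomial exactness — is needed.
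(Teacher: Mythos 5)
Your proposal is correct and follows essentially the same route as the paper's proof: the Hadamard identity is obtained by the same entrywise factorization of the symmetrized product with a diagonal matrix, and the SBP-like property is obtained by substituting $\mat{\bar{Q}}^{(l)} + (\mat{\bar{Q}}^{(l)})^\T = \mat{\bar{E}}^{(l)}$ into the split form and identifying the facet blocks via \eqref{eq:nanson}. You simply spell out the intermediate algebra (and the role of the hypothesis on how the facet geometric factors are computed) in more detail than the paper does.
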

\begin{proof}
Expressing \eqref{eq:phys_hybridized} in indicial form and factorizing, we obtain
\begin{equation}
\bar{Q}_{ij}^{(\kappa,m,n)} = \frac{1}{2}\sum_{l=1}^d \big(\bar{Q}_{ij}^{(l)}\bar{G}_{jj}^{(\kappa,l,m)} + \bar{G}_{ii}^{(\kappa,l,m)}\bar{Q}_{ij}^{(l)}\big) = \frac{1}{2}\sum_{l=1}^d \bar{Q}_{ij}^{(l)}\big(\bar{G}_{ii}^{(\kappa,l,m)} + \bar{G}_{jj}^{(\kappa,l,m)}),
\end{equation}
and hence the result in \eqref{eq:split_to_avg} follows directly from the definition of the Hadamard product. The SBP-like property in \eqref{eq:hybridized_sbp_phys} follows from \eqref{eq:sbp_hybridized} and \eqref{eq:phys_hybridized}, where we obtain the right-hand side using \eqref{eq:nanson} and the fact that the hybridized boundary operators $\mat{\bar{E}}^{(l)}$ are diagonal.
\end{proof}
Our analysis of the schemes constructed in \cref{sec:methods} then requires the following assumption. 
\begin{assumption}\label{asm:discretizations}
The discretization given by \eqref{eq:dudt_modal} with $\vc{r}^{(h,\kappa,e)}(t)$ defined as in \eqref{eq:entropy_stable} is constructed using a set of diagonal-norm SBP operators on the reference element sharing the same positive-definite matrix $\mat{W}$, with boundary operators in the form of \eqref{eq:e_decomp}. Whether computed exactly or approximately as in \eqref{eq:jacobian_approx}, the Jacobian determinant of the mapping from the reference element to each physical element satisfies \eqref{eq:jacobian_positive}. The two-point flux used to compute \eqref{eq:two_point_flux_volume} and \eqref{eq:two_point_flux_facet} is consistent and symmetric in the sense of \cref{def:ec_flux}, while the directional numerical flux in \eqref{eq:interface_flux} is consistent and conservative in the sense of \cref{def:num_flux}.
\end{assumption}
\begin{remark}
The entropy conditions in \eqref{eq:tadmor} and \eqref{eq:tadmor_directional} will not be invoked until the proof of entropy stability. As such, the analysis of conservation and free-stream preservation applies more generally to a wide range of flux-differencing DSEMs, which, as shown in \cite{gassner_winters_kopriva_splitform_nodaldg_sbp_16}, can be constructed so as to recover various split forms in the literature through the particular choice of two-point flux.
\end{remark}
We now have the following theorem relating the formulation proposed in \cref{sec:entropy_stable} to one which is readily analyzed using the properties of hybridized SBP operators.
\begin{theorem}
Under \cref{asm:discretizations}, the DSEM given by \eqref{eq:dudt_modal} with $\vc{r}^{(h,\kappa,e)}(t)$ defined as in \eqref{eq:entropy_stable} is equivalent to the following hybridized SBP formulation proposed in \cite[Eq. (35)]{chan_wilcox_entropystable_curvilinear_19}:
\begin{equation}\label{eq:skew_hybridized}
\begin{aligned}
\mat{\tilde{M}}^{(\kappa)}\frac{\dd \vc{\tilde{u}}^{(h,\kappa,e)}(t)}{\dd t} = &-\mqty[\mat{V}\\ \mat{V}^f ]^\T\sum_{m=1}^d \Big(2\mat{\bar{Q}}^{(\kappa,m)} \odot \mat{\bar{F}}^{(\kappa,m,e)}(t) \Big)\vc{1}^{(\bar{N}_q)}\\ &- \sum_{\zeta=1}^{\nfac}\big(\mat{R}^{(\zeta)}\big)^\T \mat{B}^{(\zeta)}\mat{J}^{(\kappa,\zeta)}\bigg(\vc{f}^{(*,\kappa,\zeta,e)} (t) -  \sum_{m=1}^d \mat{N}^{(\kappa,\zeta,m)}\vc{f}^{(\kappa,\zeta,m,e)}(t) \bigg),
\end{aligned}
\end{equation}
where we define
\begin{equation}
\mat{V}^f \df \mqty[\mat{R}^{(1)}\mat{V} \\ \vdots \\ \mat{R}^{(\nfac)}\mat{V}], \quad \vc{f}^{(\kappa,\zeta,m,e)}(t) \df \mqty[\fn{F}_{me}\big(\vc{\mathcal{U}}(\vc{w}_1^{(h,\kappa,\zeta)}(t))\big) \\ \vdots \\ \fn{F}_{me}\big(\vc{\mathcal{U}}(\vc{w}_{N_{q_f}^{(\zeta)}}^{(h,\kappa,\zeta)}(t))\big) ].
\end{equation}
\end{theorem}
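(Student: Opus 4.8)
The plan is to prove the equivalence at the level of the right-hand sides: since both the formulation given by \eqref{eq:dudt_modal} with \eqref{eq:entropy_stable} and the hybridized formulation \eqref{eq:skew_hybridized} carry the same weight-adjusted mass matrix $\mat{\tilde{M}}^{(\kappa)}$ on the left, it suffices to show that the right-hand side of \eqref{eq:skew_hybridized} equals $\mat{V}^\T\vc{r}^{(h,\kappa,e)}(t)$ with $\vc{r}^{(h,\kappa,e)}(t)$ as in \eqref{eq:entropy_stable}. First I would invoke \cref{lem:split_equiv_to_averaging} to replace the split-form operators $\mat{\bar{Q}}^{(\kappa,m)}$ by $\sum_{l=1}^d \mat{\bar{Q}}^{(l)} \odot \avg{\bar{G}_{lm}^{(\kappa)}}$, so that the Hadamard products against $\mat{\bar{F}}^{(\kappa,m,e)}(t)$ can be evaluated directly from the explicit block forms of $\mat{\bar{Q}}^{(l)}$ in \eqref{eq:q_hybridized} and of $\mat{\bar{F}}^{(\kappa,m,e)}(t)$ in \eqref{eq:flux_hybridized}. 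I would then split the $\bar{N}_q$-vector $\big(2\mat{\bar{Q}}^{(\kappa,m)}\odot\mat{\bar{F}}^{(\kappa,m,e)}(t)\big)\vc{1}^{(\bar{N}_q)}$ into its volume component of length $\nvolnodes$ and its $\nfac$ facet components of length $\nfacnodes$, and observe that $[\mat{V}\ \mat{V}^f]^\T$ acts as $\mat{V}^\T$ on the volume component and as $\mat{V}^\T\big(\mat{R}^{(\zeta)}\big)^\T$ on each facet component, so that a factor $\mat{V}^\T$ can be pulled out uniformly and the claim reduces to an identity between $\nvolnodes$-vectors.

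The bookkeeping then proceeds block by block. The top-left block $\mat{S}^{(l)}$ of $\mat{\bar{Q}}^{(l)}$, paired with the block $\mat{F}^{(\kappa,m,e)}(t)$, reproduces the volume flux-differencing term of \eqref{eq:entropy_stable}, since the volume--volume sub-block of $\avg{\bar{G}_{lm}^{(\kappa)}}$ coincides with $\avg{G_{lm}^{(\kappa)}}$ by the definitions in \cref{lem:split_equiv_to_averaging} and \eqref{eq:avg_vol_metrics}. The volume--facet blocks $\tfrac12\hat{n}_l^{(\zeta)}\big(\mat{R}^{(\zeta)}\big)^\T\mat{B}^{(\zeta)}$, paired with $\mat{F}^{(\kappa,\zeta,m,e)}(t)$, collapse in the volume component to $-\sum_\zeta \mat{C}^{(\kappa,\zeta,e)}(t)\vc{1}^{(\nfacnodes)}$ once one uses the metric identity $\sum_l \hat{n}_l^{(\zeta)}\cdot\tfrac12\big(G_{lm}^{(\kappa)}(\vec{\xi}^{(i)})+G_{lm}^{(\kappa)}(\vec{\xi}^{(\zeta,j)})\big)=\avg{\fn{J}^{(\kappa,\zeta)}n_m^{(\kappa,\zeta)}}_{ij}$, which follows from \eqref{eq:nanson} and \eqref{eq:avg_fac_metrics}, together with \eqref{eq:facet_correction}. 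In the facet-$\zeta$ component, the facet--volume blocks $-\tfrac12\hat{n}_l^{(\zeta)}\mat{B}^{(\zeta)}\mat{R}^{(\zeta)}$ — paired with the transposed flux blocks, using the symmetry of $\mat{\bar{F}}^{(\kappa,m,e)}(t)$ — contribute $-\big(\mat{C}^{(\kappa,\zeta,e)}(t)\big)^\T\vc{1}^{(\nvolnodes)}$, while the diagonal facet--facet blocks $\tfrac12\hat{n}_l^{(\zeta)}\mat{B}^{(\zeta)}$, paired with $\mat{F}^{(\kappa,\zeta,\zeta,m,e)}(t)$ — whose diagonal entries reduce to $\fn{F}_{me}(\vc{\mathcal{U}}(\vc{w}_j^{(h,\kappa,\zeta)}(t)))$ by the consistency of the two-point flux in \cref{def:ec_flux} — contribute $\mat{B}^{(\zeta)}\mat{J}^{(\kappa,\zeta)}\sum_m\mat{N}^{(\kappa,\zeta,m)}\vc{f}^{(\kappa,\zeta,m,e)}(t)$; here one notes that the off-diagonal facet--facet blocks of \eqref{eq:q_hybridized} vanish and remain zero under the Hadamard product, so these are the only facet contributions.

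Assembling the pieces and factoring out $\mat{V}^\T$, the surviving terms are the volume flux-differencing term of \eqref{eq:entropy_stable}, $-\mat{V}^\T\sum_\zeta\mat{C}^{(\kappa,\zeta,e)}(t)\vc{1}^{(\nfacnodes)}$, $+\mat{V}^\T\sum_\zeta\big(\mat{R}^{(\zeta)}\big)^\T\big(\mat{C}^{(\kappa,\zeta,e)}(t)\big)^\T\vc{1}^{(\nvolnodes)}$, and $-\mat{V}^\T\sum_\zeta\big(\mat{R}^{(\zeta)}\big)^\T\mat{B}^{(\zeta)}\mat{J}^{(\kappa,\zeta)}\sum_m\mat{N}^{(\kappa,\zeta,m)}\vc{f}^{(\kappa,\zeta,m,e)}(t)$, together with the entire second line of \eqref{eq:skew_hybridized}. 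The last of these exactly cancels the $+\sum_m\mat{N}^{(\kappa,\zeta,m)}\vc{f}^{(\kappa,\zeta,m,e)}(t)$ sitting inside the second line of \eqref{eq:skew_hybridized}, leaving only $-\mat{V}^\T\sum_\zeta\big(\mat{R}^{(\zeta)}\big)^\T\mat{B}^{(\zeta)}\mat{J}^{(\kappa,\zeta)}\vc{f}^{(*,\kappa,\zeta,e)}(t)$; collecting everything then yields precisely $\mat{V}^\T\vc{r}^{(h,\kappa,e)}(t)$ for $\vc{r}^{(h,\kappa,e)}(t)$ as in \eqref{eq:entropy_stable}, which completes the proof.

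I expect the main obstacle to be this final reconciliation step, together with the index bookkeeping that precedes it: keeping straight which indices run over volume versus facet nodes within the averaged metric blocks, balancing the factor of $2$ in $2\mat{\bar{Q}}^{(\kappa,m)}$ against the factors of $\tfrac12$ in \eqref{eq:q_hybridized}, tracking signs through the off-diagonal hybridized blocks and the prolongation by $\mat{V}^f$, and recognizing that the split form \eqref{eq:phys_hybridized} evaluated on the diagonal facet block necessarily generates a metric term that must be absorbed into the interface contribution — this is exactly where the superficially different structures of \eqref{eq:entropy_stable} and \eqref{eq:skew_hybridized} are made to agree. All remaining ingredients (the metric-averaging identities, consistency and symmetry of the two-point flux, the vanishing of the off-diagonal facet blocks) are either immediate from the definitions or already furnished by \cref{lem:hybridized_sbp_property} and \cref{lem:split_equiv_to_averaging}, and, in keeping with the remark preceding the theorem, only the symmetry and consistency of the two-point flux are used here, not Tadmor's condition.
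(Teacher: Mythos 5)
Your proposal is correct and follows essentially the same route as the paper: both rest on \cref{lem:hybridized_sbp_property} and \cref{lem:split_equiv_to_averaging}, the consistency and symmetry of the two-point flux, and the observation that the diagonal facet--facet blocks of the hybridized operator generate the $\sum_m \mat{N}^{(\kappa,\zeta,m)}\vc{f}^{(\kappa,\zeta,m,e)}(t)$ terms that cancel against the second line of \eqref{eq:skew_hybridized}. The only difference is presentational --- you verify the equivalence by expanding $\mat{\bar{Q}}^{(l)}$ block by block starting from \eqref{eq:skew_hybridized}, whereas the paper derives \eqref{eq:skew_hybridized} from \eqref{eq:entropy_stable} by packaging the same bookkeeping into the single identity $2\mat{\bar{S}}^{(l)} = 2\mat{\bar{Q}}^{(l)} - \mat{\bar{E}}^{(l)}$.
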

\begin{proof}
Substituting \eqref{eq:entropy_stable} into \eqref{eq:dudt_modal} and grouping all Hadamard products into block matrices gives
\begin{equation}\label{eq:skew_hybridized_weak}
\begin{aligned}
\mat{\tilde{M}}^{(\kappa)}\frac{\dd \vc{\tilde{u}}^{(h,\kappa,e)}(t)}{\dd t} = &- \mqty[\mat{V}\\ \mat{V}^f ]^\T\sum_{l=1} ^d \bigg(2\mat{\bar{S}}^{(l)} \odot \sum_{m=1}^d \avg{\bar{G}_{lm}^{(\kappa)}} \odot \mat{\bar{F}}^{(\kappa,m,e)}(t) \bigg)\vc{1}^{(\bar{N}_q)} \\ &- \sum_{\zeta=1}^{\nfac}\big(\mat{R}^{(\zeta)}\big)^\T \mat{B}^{(\zeta)}\mat{J}^{(\kappa,\zeta)}\vc{f}^{(*,\kappa,\zeta,e)}(t),
\end{aligned}
\end{equation}
where the facet correction terms in \eqref{eq:facet_correction} have been incorporated into the hybridized operator
\begin{equation}\label{eq:s_hybridized}
\mat{\bar{S}}^{(l)} \df \mqty[\mat{S}^{(l)} &  \frac{1}{2}\hat{n}_l^{(1)}\big(\mat{R}^{(1)}\big)^\T\mat{B}^{(1)} & \cdots & \frac{1}{2}\hat{n}_l^{(\nfac)}\big(\mat{R}^{(\nfac)}\big)^\T\mat{B}^{(\nfac)} \\ -\frac{1}{2}\hat{n}_l^{(1)}\mat{B}^{(1)}\mat{R}^{(1)}  &  &  & \\ \vdots  & & &  \\ - \frac{1}{2}\hat{n}_l^{(\nfac)}\mat{B}^{(\nfac)}\mat{R}^{(\nfac)}   &  &  &  ].
\end{equation}
Recognizing the matrix in \eqref{eq:s_hybridized} as the skew-symmetric part of $\mat{\bar{Q}}^{(l)}$, we invoke the SBP-like property from \cref{lem:hybridized_sbp_property} to obtain $
2\mat{\bar{S}}^{(l)} = 2\mat{\bar{Q}}^{(l)} - \mat{\bar{E}}^{(l)}$. Substituting such a relation into \eqref{eq:skew_hybridized_weak} and moving the inner sum outside the Hadamard product, the flux-differencing terms can be rewritten as
\begin{align}\label{eq:rewrite_flux_differencing}
\mqty[\mat{V}\\ \mat{V}^f ]^\T\sum_{l=1} ^d \bigg(2\mat{\bar{S}}^{(l)} \odot  \avg{\bar{G}_{lm}^{(\kappa)}} \odot \mat{\bar{F}}^{(\kappa,m,e)}(t) \bigg)\vc{1}^{(\bar{N}_q)} &=
\mqty[\mat{V}\\ \mat{V}^f ]^\T\sum_{l=1} ^d \bigg(2\mat{\bar{Q}}^{(l)} \odot \avg{\bar{G}_{lm}^{(\kappa)}} \odot \mat{\bar{F}}^{(\kappa,m,e)}(t) \bigg)\vc{1}^{(\bar{N}_q)}\notag 
\\ &- \sum_{\zeta=1}^{\nfac}\big(\mat{R}^{(\zeta)}\big)^\T \mat{B}^{(\zeta)}\mat{J}^{(\kappa,\zeta)}\mat{N}^{(\kappa,\zeta,m)}\vc{f}^{(\kappa,\zeta,m,e)}(t),
\end{align}
where the second term on the right-hand side results from the consistency of the two-point flux and the fact that $\mat{\bar{E}}^{(l)}$ is diagonal. 
Finally, we invoke \cref{lem:split_equiv_to_averaging} to express the first term on the right-hand side of \eqref{eq:rewrite_flux_differencing} using the split-form operator in \eqref{eq:phys_hybridized}. Substituting the result into \eqref{eq:skew_hybridized_weak} then yields the scheme in \eqref{eq:skew_hybridized}.
\end{proof}
As a consequence of the above equivalence, the conservation, free-stream preservation, and entropy stability of the proposed discretizations follow directly from the analysis in \cite{chan_discretely_entropy_conservative_dg_sbp_18} and \cite{ chan_wilcox_entropystable_curvilinear_19}. The proofs of such results rely on the assumption of a conforming mesh in the following sense.
\begin{assumption}\label{asm:conformal}
For each pair of element indices $\kappa,\nu \in \{1:\nelem \}$ with $\kappa \neq \nu$ such that $\partial\Omega^{(\kappa)} \cap \partial\Omega^{(\nu)} \neq \emptyset$, there exist a unique pair of facet indices $\zeta,\eta \in \{1:\nfac\}$ such that $\Gamma^{(\kappa,\zeta)} = \Gamma^{(\nu,\eta)}$. Furthermore, for every $i \in \{1:\nfacnodes\}$ there exists a unique $j \in \{1:N_{q_f}^{(\eta)}\}$ for which we have
\begin{equation}
\vec{n}^{(\kappa,\zeta)}(\fvec{x}^{(\kappa)}(\vec{\xi}^{(\zeta,i)})) = -\vec{n}^{(\nu,\eta)}(\fvec{x}^{(\nu)}(\vec{\xi}^{(\eta,j)})), \quad
\omega^{(\zeta,i)}\fn{J}^{(\kappa,\zeta)}(\vec{\xi}^{(\zeta,i)}) = \omega^{(\eta,j)}\fn{J}^{(\nu,\eta)}(\vec{\xi}^{(\eta,j)}), \label{eq:weights_match}
\end{equation}
where the normals are computed as in \eqref{eq:nanson} using the (exact or approximate) volume metric terms.
\end{assumption}
\begin{remark}\label{rmk:connectivity}
Sherwin and Karniadakis \cite[Section 2.3]{sherwin_karniadakis_tetrahedra_hp_fem_96} and Warburton \etal \cite{warburton_unstructured_connectivity_95} describe preprocessing algorithms for orienting local coordinate systems on tetrahedra so as to satisfy \cref{asm:conformal} despite the asymmetry of the tensor-product facet quadrature rules. Using such techniques, the proposed operators can be used with standard mesh generation tools without the need for nonconforming interface procedures. Otherwise, mortar-based techniques similar to those introduced by Chan \etal \cite{chan_bencomo_delrey_mortar_based_entropy_stable_21} could be employed in order to enable the use of symmetric nodal sets at element interfaces, thereby eliminating the need for a preprocessing step, although such an approach would increase the cost of applying the interpolation/extrapolation operators.
\end{remark}
Referring to \cite{chan_discretely_entropy_conservative_dg_sbp_18} and \cite{chan_wilcox_entropystable_curvilinear_19} for further details, we summarize the critical steps of the analysis in the remainder of this section, beginning with the following theorem establishing discrete conservation.
\begin{theorem}\label{thm:conservation}
Let \cref{asm:discretizations,asm:conformal} hold and also assume that the Jacobian determinant of the mapping, whether computed exactly or approximately as in \eqref{eq:jacobian_approx}, satisfies $\fn{J}^{(\kappa)} \in \mathbb{P}_{p} (\hat{\Omega})$ and that the quadrature rule in \eqref{eq:sbp_quadrature} induced by $\mat{W}$ is of degree $\tau \geq 2p$. The discretization given by \eqref{eq:skew_hybridized}, or, equivalently, by \eqref{eq:dudt_modal} with $\vc{r}^{(h,\kappa,e)}(t)$ defined as in \eqref{eq:entropy_stable}, is then discretely conservative, satisfying
\begin{equation}\label{eq:conservation}
\frac{\dd}{\dd t} \sum_{\kappa=1}^{\nelem} \big(\vc{1}^{(\nvolnodes)}\big)^\T \mat{W}\mat{J}^{(\kappa)} \vc{u}^{(h,\kappa,e)}(t) = -\sum_{\Gamma^{(\kappa,\zeta)} \subset \partial\Omega }\big(\vc{1}^{(\nfacnodes)}\big)^\T\mat{B}^{(\zeta)}\mat{J}^{(\kappa)}\vc{f}^{(*,\kappa,\zeta,e)}(t).
\end{equation} 
\end{theorem}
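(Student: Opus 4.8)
The plan is to prove \eqref{eq:conservation} by summing the semi-discrete equation \eqref{eq:skew_hybridized} over all elements and all facets, after testing against the constant vector. The key observation is that the constant function $\mathbf{1}$ lies in $\mathbb{P}_p(\hat{\Omega})$, so it is reproduced exactly by the modal basis; more precisely, there is a vector $\vc{\tilde{1}}$ of modal coefficients with $\mat{V}\vc{\tilde{1}} = \vc{1}^{(\nvolnodes)}$ and $\mat{R}^{(\zeta)}\mat{V}\vc{\tilde{1}} = \vc{1}^{(\nfacnodes)}$, so that $\mat{V}^f \vc{\tilde{1}} = \vc{1}$ on the concatenated facet nodes. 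I would left-multiply \eqref{eq:skew_hybridized} by $\big(\vc{\tilde{1}}\big)^\T$. On the left-hand side, since $\fn{J}^{(\kappa)} \in \mathbb{P}_p$ and the quadrature is of degree $\tau \geq 2p$, the weight-adjusted mass matrix acts exactly, giving $\big(\vc{\tilde{1}}\big)^\T \mat{\tilde{M}}^{(\kappa)} = \big(\vc{1}^{(\nvolnodes)}\big)^\T \mat{W}\mat{J}^{(\kappa)} \mat{V}$; I should verify this identity using the definition \eqref{eq:weight_adjusted_inverse} of $\mat{\tilde{M}}^{(\kappa)}$ together with $\mat{M} = \mat{V}^\T\mat{W}\mat{V} = \mat{I}$ and $\mat{V}\vc{\tilde{1}} = \vc{1}$, so that the left side collapses to $\frac{\dd}{\dd t}\big(\vc{1}^{(\nvolnodes)}\big)^\T\mat{W}\mat{J}^{(\kappa)}\vc{u}^{(h,\kappa,e)}(t)$, matching the summand on the left of \eqref{eq:conservation}.

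For the right-hand side, the first step is to show the flux-differencing (volume) term vanishes when contracted with $\vc{\tilde{1}}$. Using $\big[\mat{V}; \mat{V}^f\big]\vc{\tilde{1}} = \vc{1}^{(\bar N_q)}$, this term becomes $\big(\vc{1}^{(\bar N_q)}\big)^\T \sum_m \big(2\mat{\bar{Q}}^{(\kappa,m)} \odot \mat{\bar{F}}^{(\kappa,m,e)}(t)\big)\vc{1}^{(\bar N_q)}$, i.e.\ the sum of all entries of the Hadamard product. Because the two-point flux is symmetric (\cref{def:ec_flux}), each $\mat{\bar{F}}^{(\kappa,m,e)}(t)$ is a symmetric matrix, and by \cref{lem:split_equiv_to_averaging} (specifically \eqref{eq:split_to_avg} together with the SBP-like property \eqref{eq:hybridized_sbp_phys}), the matrix $\mat{\bar{Q}}^{(\kappa,m)}$ has skew-symmetric off-diagonal structure: its symmetric part $\tfrac12(\mat{\bar{Q}}^{(\kappa,m)} + (\mat{\bar{Q}}^{(\kappa,m)})^\T)$ is block-diagonal with a zero $\nvolnodes\times\nvolnodes$ block and facet blocks $\mat{B}^{(\zeta)}\mat{J}^{(\kappa,\zeta)}\mat{N}^{(\kappa,\zeta,m)}$. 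Decomposing $\mat{\bar{Q}}^{(\kappa,m)}$ into its symmetric and skew-symmetric parts, the skew part contributes nothing to $\big(\vc{1}\big)^\T(\cdot \odot \text{sym})\vc{1}$ because the sum of all entries of a (skew)$\odot$(symmetric) matrix is zero, and what remains is exactly $\sum_{\zeta}\big(\vc{1}^{(\nfacnodes)}\big)^\T\mat{B}^{(\zeta)}\mat{J}^{(\kappa,\zeta)}\mat{N}^{(\kappa,\zeta,m)}\vc{f}^{(\kappa,\zeta,m,e)}(t)$, since the diagonal of $\mat{\bar{F}}^{(\kappa,m,e)}(t)$ on the facet blocks consists of the consistent flux values $\fn{F}_{me}(\vc{\mathcal{U}}(\vc{w}_i^{(h,\kappa,\zeta)}))$. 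This precisely cancels the $\sum_m \mat{N}^{(\kappa,\zeta,m)}\vc{f}^{(\kappa,\zeta,m,e)}(t)$ term on the second line of \eqref{eq:skew_hybridized}, leaving only the interface-flux contribution $-\sum_\zeta \big(\vc{1}^{(\nfacnodes)}\big)^\T\mat{B}^{(\zeta)}\mat{J}^{(\kappa,\zeta)}\vc{f}^{(*,\kappa,\zeta,e)}(t)$ from each element.

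Finally, I would sum over all elements. Interior facets are shared by exactly one pair $(\kappa,\zeta)$, $(\nu,\eta)$ by \cref{asm:conformal}; using the conservation property \eqref{eq:conservative_flux} of the numerical flux together with the matching conditions \eqref{eq:weights_match} (equal quadrature-weighted surface Jacobians and opposite normals at paired nodes), the two contributions $\big(\vc{1}^{(\nfacnodes)}\big)^\T\mat{B}^{(\zeta)}\mat{J}^{(\kappa,\zeta)}\vc{f}^{(*,\kappa,\zeta,e)}$ and $\big(\vc{1}^{(N_{q_f}^{(\eta)})}\big)^\T\mat{B}^{(\eta)}\mat{J}^{(\nu,\eta)}\vc{f}^{(*,\nu,\eta,e)}$ are equal and opposite term-by-term and hence cancel, leaving only the sum over boundary facets $\Gamma^{(\kappa,\zeta)} \subset \partial\Omega$, which is the right-hand side of \eqref{eq:conservation}. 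The main obstacle I anticipate is the bookkeeping in the second paragraph: carefully justifying that contracting the Hadamard product with $\vc{1}$ on both sides isolates exactly the diagonal facet terms and that the skew part drops out, while also tracking that $\mat{\bar F}$ uses entropy-projected variables whereas $\vc f^{(\kappa,\zeta,m,e)}$ is the consistent flux of those same projected states — this consistency is what makes the cancellation against the second line of \eqref{eq:skew_hybridized} exact rather than approximate. The remaining steps (exactness of $\mat{\tilde M}^{(\kappa)}$ on constants, and the telescoping of interface fluxes) are routine given the stated assumptions.
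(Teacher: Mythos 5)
Your proposal is correct and follows essentially the same strategy as the paper: test against the modal representation of the constant, use a symmetric/skew-symmetric Hadamard-product argument to eliminate the volume flux-differencing contribution, invoke quadrature exactness (with $\fn{J}^{(\kappa)} \in \mathbb{P}_p$ and $\tau \geq 2p$) to identify $\vc{\mathit{1}}^\T\mat{\tilde{M}}^{(\kappa)}\vc{\tilde{u}}^{(h,\kappa,e)}(t)$ with $(\vc{1}^{(\nvolnodes)})^\T\mat{W}\mat{J}^{(\kappa)}\vc{u}^{(h,\kappa,e)}(t)$, and telescope the interface fluxes via \eqref{eq:conservative_flux} and \cref{asm:conformal}. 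The only (immaterial) difference is that the paper applies the argument to the equivalent form \eqref{eq:skew_hybridized_weak}, where the volume term is a fully skew-symmetric Hadamard product contracted on both sides with $\vc{1}^{(\bar{N}_q)}$ and therefore vanishes outright, whereas you work from \eqref{eq:skew_hybridized} and consequently must carry out the extra (correct) cancellation between the diagonal boundary part of $\mat{\bar{Q}}^{(\kappa,m)}$ acting on the consistent diagonal of $\mat{\bar{F}}^{(\kappa,m,e)}(t)$ and the $\mat{N}^{(\kappa,\zeta,m)}\vc{f}^{(\kappa,\zeta,m,e)}(t)$ term on the second line.
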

\begin{proof}
Letting the vector $\vc{\mathit{1}} \in \mathbb{R}^{\npoly}$ contain the modal expansion coefficients for the constant function such that $\mat{V}\vc{\mathit{1}} = \vc{1}^{(\nvolnodes)}$, we use the skew-symmetry of the matrices $\mat{\bar{S}}^{(l)} \odot \avg{\bar{G}_{lm}^{(\kappa)}} \odot \mat{\bar{F}}^{(\kappa,m,e)}$ in \eqref{eq:skew_hybridized_weak} and the exactness of $\mat{R}^{(\zeta)}$ for constant functions to obtain
\begin{equation}
\frac{\dd}{\dd t}\Big(\vc{\mathit{1}}^\T\mat{\tilde{M}}^{(\kappa)}\vc{\tilde{u}}^{(h,\kappa,e)}(t)\Big) = -\sum_{\zeta=1}^{\nfac}\big(\vc{1}^{(\nfacnodes)}\big)^\T\mat{B}^{(\zeta)}\mat{J}^{(\kappa)}\vc{f}^{(*,\kappa,\zeta,e)}(t),
\end{equation}
which is a statement of local conservation with respect to the weight-adjusted mass matrix. From \cite[Lemma 2]{chan_wilcox_entropystable_curvilinear_19}, the exactness of the volume quadrature when $\fn{U}_e^{(h,\kappa)}\fn{J}^{(\kappa)} \in \mathbb{P}_{2p}(\hat{\Omega})$ gives
\begin{equation}
\vc{\mathit{1}}^\T\mat{\tilde{M}}^{(\kappa)}\vc{\tilde{u}}^{(h,\kappa,e)}(t) = \int_{\hat{\Omega}} \fn{u}_e^{(h,\kappa)}(\vec{\xi},t) \fn{J}^{(\kappa)}(\vec{\xi}) \, \dd \vec\xi = \big(\vc{1}^{(\nvolnodes)}\big)^\T \mat{W}\mat{J}^{(\kappa)} \vc{u}^{(h,\kappa,e)}(t).
\end{equation}
Using the conservation property of the numerical interface flux and invoking \cref{asm:conformal} then results in the statement of global conservation in \eqref{eq:conservation}.
\end{proof}
Next, we present the following lemma proven in \cite[Theorem 5]{chan_wilcox_entropystable_curvilinear_19}, which establishes conditions under which the hybridized SBP operators on the physical element satisfy a discrete form of \eqref{eq:metric_identities}.
\begin{lemma}
Assume that the metric terms, whether computed exactly or approximately, satisfy $G_{mn}^{(\kappa)} \in \mathbb{P}_q(\hat{\Omega})$ as well as the metric identities in \eqref{eq:metric_identities}, and that the normals are computed as in \eqref{eq:nanson}. Then, the hybridized SBP operators in \eqref{eq:phys_hybridized} satisfy the following discrete metric identities:
\begin{equation}\label{eq:discrete_metric_identities}
\mat{\bar{Q}}^{(\kappa,m)}\vc{1}^{(\bar{N}_q)} = \vc{0}^{(\bar{N}_q)}, \quad \forall\, m \in \{1:d\}.
\end{equation}
\end{lemma}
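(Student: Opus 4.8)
The plan is to verify the identity block by block, using the structure of the reference hybridized operator in \eqref{eq:q_hybridized} together with the split form \eqref{eq:phys_hybridized}. First I would establish the auxiliary fact that $\mat{\bar{Q}}^{(l)}\vc{1}^{(\bar{N}_q)} = \vc{0}^{(\bar{N}_q)}$ for each $l \in \{1:d\}$. In the volume block row of \eqref{eq:q_hybridized}, writing $\mat{S}^{(l)} = \mat{Q}^{(l)} - \tfrac{1}{2}\mat{E}^{(l)}$, using $\mat{Q}^{(l)}\vc{1} = \mat{W}\mat{D}^{(l)}\vc{1} = \vc{0}$ (exact differentiation of constants), and expanding $\mat{E}^{(l)}\vc{1}$ via \eqref{eq:e_decomp} together with the reproduction of constants by each $\mat{R}^{(\zeta)}$, one finds that the surface terms in that block row exactly cancel $\mat{S}^{(l)}\vc{1}$. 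In each facet block row, the two nonzero blocks $-\tfrac{1}{2}\hat{n}_l^{(\zeta)}\mat{B}^{(\zeta)}\mat{R}^{(\zeta)}$ and $\tfrac{1}{2}\hat{n}_l^{(\zeta)}\mat{B}^{(\zeta)}$, applied to the volume and $\zeta$-th facet sub-vectors of $\vc{1}^{(\bar{N}_q)}$ respectively, cancel directly since $\mat{R}^{(\zeta)}\vc{1}^{(\nvolnodes)} = \vc{1}^{(\nfacnodes)}$.

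Next I would apply this to the split form. Expanding $\mat{\bar{Q}}^{(\kappa,m)}\vc{1}^{(\bar{N}_q)} = \tfrac{1}{2}\sum_{l=1}^d\big(\mat{\bar{Q}}^{(l)}\mat{\bar{G}}^{(\kappa,l,m)}\vc{1} + \mat{\bar{G}}^{(\kappa,l,m)}\mat{\bar{Q}}^{(l)}\vc{1}\big)$, the second summand vanishes immediately by the auxiliary fact, leaving $\tfrac{1}{2}\sum_l \mat{\bar{Q}}^{(l)}\bar{\vc{g}}_{lm}^{(\kappa)}$, where $\bar{\vc{g}}_{lm}^{(\kappa)} \df \mat{\bar{G}}^{(\kappa,l,m)}\vc{1}^{(\bar{N}_q)}$ has its volume block equal to the vector of nodal values of $G_{lm}^{(\kappa)}$ at the volume quadrature nodes and its facet blocks equal to the nodal values at the facet quadrature nodes. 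Since $G_{lm}^{(\kappa)} \in \mathbb{P}_q(\hat{\Omega})$ and the operators $\mat{R}^{(\zeta)}$ reproduce such polynomials exactly, the facet blocks of $\bar{\vc{g}}_{lm}^{(\kappa)}$ coincide with $\mat{R}^{(\zeta)}$ applied to its volume block. I would then compute $\mat{\bar{Q}}^{(l)}\bar{\vc{g}}_{lm}^{(\kappa)}$ block by block: in the volume block, $\mat{S}^{(l)}$ and the surface contributions $\sum_\zeta \tfrac{1}{2}\hat{n}_l^{(\zeta)}(\mat{R}^{(\zeta)})^\T\mat{B}^{(\zeta)}\mat{R}^{(\zeta)}$ reassemble into $\mat{S}^{(l)} + \tfrac{1}{2}\mat{E}^{(l)} = \mat{Q}^{(l)} = \mat{W}\mat{D}^{(l)}$ acting on the nodal values of $G_{lm}^{(\kappa)}$, so that summing over $l$ and invoking the exact differentiation of the degree-$q$ metric terms yields $\tfrac{1}{2}\mat{W}$ times the nodal values of $\sum_l \partial G_{lm}^{(\kappa)}/\partial\xi_l$, which vanishes by the metric identities \eqref{eq:metric_identities}; in each facet block, the two contributions $-\tfrac{1}{2}\hat{n}_l^{(\zeta)}\mat{B}^{(\zeta)}\mat{R}^{(\zeta)}$ (acting on the volume block of $\bar{\vc{g}}_{lm}^{(\kappa)}$) and $\tfrac{1}{2}\hat{n}_l^{(\zeta)}\mat{B}^{(\zeta)}$ (acting on the $\zeta$-th facet block, which is $\mat{R}^{(\zeta)}$ applied to the volume block) cancel identically, independently of the metric identities. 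Assembling the two blocks gives $\mat{\bar{Q}}^{(\kappa,m)}\vc{1}^{(\bar{N}_q)} = \vc{0}^{(\bar{N}_q)}$.

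The argument is largely bookkeeping, and the principal subtlety is ensuring that the facet evaluations of the metric terms stored in the diagonal blocks of $\mat{\bar{G}}^{(\kappa,l,m)}$ are exactly reproduced by $\mat{R}^{(\zeta)}$ from the corresponding volume nodal values, and that $\mat{D}^{(l)}$ differentiates the degree-$q$ metric terms exactly — both of which follow from $G_{lm}^{(\kappa)} \in \mathbb{P}_q(\hat{\Omega})$ together with the accuracy conditions in \cref{def:sbp} and \eqref{eq:extrap_acc}. I would refer to \cite[Theorem 5]{chan_wilcox_entropystable_curvilinear_19} for the remaining details.
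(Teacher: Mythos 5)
Your proof is correct and is essentially the argument of \cite[Theorem 5]{chan_wilcox_entropystable_curvilinear_19}, to which the paper defers rather than proving the lemma itself: the auxiliary identity $\mat{\bar{Q}}^{(l)}\vc{1}^{(\bar{N}_q)}=\vc{0}$, the reduction of the split form to $\tfrac{1}{2}\sum_l\mat{\bar{Q}}^{(l)}\mat{\bar{G}}^{(\kappa,l,m)}\vc{1}$, and the block-by-block reassembly into $\mat{W}\sum_l\mat{D}^{(l)}$ acting on the nodal metric terms are exactly the intended steps. You have also correctly isolated the one genuine subtlety, namely that the facet cancellations require $\mat{R}^{(\zeta)}$ to be exact on $\mathbb{P}_q(\hat{\Omega})$ (i.e.\ $r^{(\zeta)}\geq q$, not merely $r^{(\zeta)}\geq p$), which holds for the specific tensor-product and multidimensional operators used in the paper.
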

The discrete metric identities are closely related to the \emph{free-stream preservation} property, in which a uniform solution state is guaranteed to remain constant in time. This is established for the proposed schemes with the following theorem.
\begin{theorem}\label{thm:freestream}
Let \cref{asm:discretizations,asm:conformal} hold and also assume that the discrete metric identities in \eqref{eq:discrete_metric_identities} are satisfied for all $\kappa \in \{1:\nelem\}$. The scheme in \eqref{eq:skew_hybridized}, or, equivalently, in \eqref{eq:dudt_modal} with $\vc{r}^{(h,\kappa,e)}(t)$ defined as in \eqref{eq:entropy_stable}, is then free-stream preserving, such that the right-hand side of \eqref{eq:dudt_modal} vanishes for all $\kappa \in \{1:\nelem\}$ and $e \in \{1:\ncons\}$ for any uniform solution state satisfying the boundary conditions.
\end{theorem}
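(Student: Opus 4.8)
The plan is to work with the hybridized SBP formulation \eqref{eq:skew_hybridized}, which by the preceding theorem is equivalent to the scheme \eqref{eq:dudt_modal}--\eqref{eq:entropy_stable}, and to show that its right-hand side vanishes identically whenever the modal coefficients $\vc{\tilde u}^{(h,\kappa,e)}$ encode a spatially uniform state $\vc{U}^* \in \Upsilon$ compatible with the boundary conditions. Since $\mat{\tilde M}^{(\kappa)}$ is symmetric positive-definite under \cref{asm:discretizations} (using $\fn J^{(\kappa)} > 0$), this immediately yields $\dd\vc{\tilde u}^{(h,\kappa,e)}/\dd t = \vc 0$, and hence the vanishing of the right-hand side of \eqref{eq:dudt_modal} by equivalence.

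The first step is to show that, for such a uniform state, the \emph{entropy-projected conservative variables} coincide with $\vc{U}^*$ at every volume and facet quadrature node. When $\vc{\tilde u}^{(h,\kappa,e)}$ encodes the constant $\vc{U}^*$, every nodal value $\vc u_i^{(h,\kappa)}$ equals $\vc{U}^*$, and so do the pointwise entropy variables $\vc{\mathcal W}(\vc u_i^{(h,\kappa)})$. The crucial point is that the weight-adjusted projection \eqref{eq:entropy_expansion} reproduces constant functions exactly --- a property established by Chan and Wilcox \cite{chan_wilcox_entropystable_curvilinear_19}, which relies on constants lying in $\mathbb{P}_p(\hat\Omega)$ together with exactness of the volume quadrature for the relevant products (and is, in fact, one motivation for approximating the Jacobian determinant in $\mathbb{P}_p$ as in \eqref{eq:jacobian_approx}). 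Consequently $\vc{\tilde w}^{(h,\kappa,e)}$ encodes the constant $\mathcal W_e(\vc{U}^*)$; applying $\mat V$ and then, via the exactness of $\mat R^{(\zeta)}$ for constants, restricting to each facet gives $\vc w_i^{(h,\kappa)} = \vc{\mathcal W}(\vc{U}^*)$ and $\vc w_i^{(h,\kappa,\zeta)} = \vc{\mathcal W}(\vc{U}^*)$ at all nodes, so $\vc{\mathcal U}(\vc w_i^{(h,\kappa)}) = \vc{\mathcal U}(\vc w_i^{(h,\kappa,\zeta)}) = \vc{U}^*$ since $\vc{\mathcal U} = \vc{\mathcal W}^{-1}$. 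Moreover, because the uniform state satisfies the boundary conditions and, on each interior interface, \cref{asm:conformal} links $\Gamma^{(\kappa,\zeta)}$ to a neighbour likewise in the state $\vc{U}^*$, the exterior traces $\vc{\mathcal U}(\vc w_i^{(h,\kappa,\zeta,+)})$ also equal $\vc{U}^*$.

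It then remains to evaluate the two terms on the right-hand side of \eqref{eq:skew_hybridized}. By consistency of the two-point flux (\cref{def:ec_flux}), every block entry of $\mat{\bar F}^{(\kappa,m,e)}(t)$ in \eqref{eq:flux_hybridized} equals the constant $\fn{f}_{me}(\vc{U}^*)$, whence $2\mat{\bar Q}^{(\kappa,m)}\odot\mat{\bar F}^{(\kappa,m,e)}(t) = 2\fn{f}_{me}(\vc{U}^*)\,\mat{\bar Q}^{(\kappa,m)}$; right-multiplying by $\vc 1^{(\bar N_q)}$ gives $2\fn{f}_{me}(\vc{U}^*)\,\mat{\bar Q}^{(\kappa,m)}\vc 1^{(\bar N_q)} = \vc 0$ by the discrete metric identities \eqref{eq:discrete_metric_identities}, and summing over $m$ and applying the operator $[\mat V;\mat V^f]^\T$ keeps this zero. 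For the surface term, consistency of the directional numerical flux (\cref{def:num_flux}) gives $[\vc f^{(*,\kappa,\zeta,e)}]_i = \fn{f}_e\big(\vc{U}^*, \vec n^{(\kappa,\zeta)}(\fvec x^{(\kappa)}(\vec\xi^{(\zeta,i)}))\big) = \sum_{m=1}^d n_m^{(\kappa,\zeta)}(\cdots)\,\fn{f}_{me}(\vc{U}^*)$, which is precisely $\big[\sum_{m=1}^d \mat N^{(\kappa,\zeta,m)}\vc f^{(\kappa,\zeta,m,e)}\big]_i$ since each entry of $\vc f^{(\kappa,\zeta,m,e)}$ equals $\fn{f}_{me}(\vc{U}^*)$ by consistency of the two-point flux; hence the bracketed quantity on the second line of \eqref{eq:skew_hybridized} vanishes node-by-node. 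With both contributions zero, the right-hand side of \eqref{eq:skew_hybridized} is $\vc 0$, completing the argument.

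The step I expect to be the main obstacle is the reproduction of constants by the weight-adjusted entropy projection: although intuitively evident, it hinges on the precise quadrature-exactness and polynomial-degree hypotheses in force --- in particular $\mat M = \mat I$, the Jacobian determinant belonging to $\mathbb{P}_p(\hat\Omega)$, and the volume quadrature being of sufficiently high degree --- so I would either cite the corresponding result of Chan and Wilcox directly or reproduce its short proof. The remainder --- reducing the right-hand side of \eqref{eq:skew_hybridized} to zero using \cref{lem:hybridized_sbp_property,lem:split_equiv_to_averaging} and the discrete metric identities --- is then entirely routine.
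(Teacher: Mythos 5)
Your proof is correct and follows essentially the same route as the paper's: pass to the hybridized formulation \eqref{eq:skew_hybridized}, use consistency of the two-point and interface fluxes together with the discrete metric identities \eqref{eq:discrete_metric_identities} to annihilate both the volume and facet terms, and conclude via invertibility of the weight-adjusted mass matrix. The one place you go beyond the paper is in explicitly verifying that the weight-adjusted entropy projection reproduces the constant state, so that the entropy-projected conservative variables at all volume and facet nodes really equal $\vc{U}^*$; the paper's proof silently assumes this, and your observation that it hinges on $\fn{J}^{(\kappa)} \in \mathbb{P}_p(\hat{\Omega})$ and sufficient quadrature exactness --- hypotheses stated in \cref{thm:conservation} but not repeated in \cref{thm:freestream} --- is a legitimate and worthwhile refinement rather than a deviation.
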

\begin{proof}
Considering the formulation in \eqref{eq:skew_hybridized}, the facet penalty on the second line vanishes when the solution is identical on both sides of the interface due to the consistency property of the numerical interface flux. Invoking the consistency of the two-point flux as well, we then see that the entire right-hand side of \eqref{eq:skew_hybridized} vanishes when \eqref{eq:discrete_metric_identities} holds. Since the weight-adjusted mass matrix is invertible by construction, the time derivative is zero, and the scheme is therefore free-stream preserving.
\end{proof}
Invoking the entropy conditions in \eqref{eq:tadmor} and \eqref{eq:tadmor_directional}, we now present the following theorem, which establishes that the proposed discretizations satisfy a discrete version of the bound in \eqref{eq:entropy_balance}.
\begin{theorem}\label{thm:entropy_stability}
Let \cref{asm:discretizations,asm:conformal} hold, and also assume that the discrete metric identities in \eqref{eq:discrete_metric_identities} are satisfied for all $\kappa \in \{1:\nelem\}$, that the two-point flux used to compute \eqref{eq:two_point_flux_volume} and \eqref{eq:two_point_flux_facet} is entropy conservative in the sense of \cref{def:ec_flux}, and that the numerical interface flux is entropy stable in the sense of \cref{def:num_flux}. The discretization given by \eqref{eq:skew_hybridized}, or, equivalently, by \eqref{eq:dudt_modal} with $\vc{r}^{(h,\kappa,e)}(t)$ defined as in \eqref{eq:entropy_stable}, is then discretely entropy stable, satisfying the entropy balance  
\begin{equation}\label{eq:discrete_entropy_bound}
\begin{aligned}
\frac{\dd}{\dd t} \sum_{\kappa=1}^{\nelem} \big(\vc{1}^{(\nvolnodes)}\big)^\T \mat{W}\mat{J}^{(\kappa)} \vc{s}^{(h,\kappa)}(t) &\leq \sum_{\Gamma^{(\kappa,\zeta)} \subset \partial\Omega }\Bigg( \sum_{m=1}^d \big(\vc{1}^{(\nfacnodes)}\big)^\T\mat{B}^{(\zeta)}\mat{J}^{(\kappa,\zeta)}\mat{N}^{(\kappa,\zeta,m)}\vc{\psi}^{(\kappa,\zeta,m)}(t) \\ &- \sum_{e=1}^{\ncons}\big(\vc{w}^{(h,\kappa,e)}(t)\big)^\T\big(\mat{R}^{(\zeta)}\big)^\T\mat{B}^{(\zeta)}\mat{J}^{(\kappa,\zeta)}\vc{f}^{(*,\kappa,\zeta,e)}(t)\Bigg),
\end{aligned}
\end{equation} 
where the entries of the vectors $\vc{s}^{(h,\kappa)}(t) \in \mathbb{R}^{\nvolnodes}$ and $\vc{\psi}^{(\kappa,\zeta,m)}(t) \in \mathbb{R}^{\nfacnodes} $ are given, respectively, by
\begin{equation}
s_i^{(h,\kappa)}(t) \df \mathcal{S}(\vc{u}_i^{(h,\kappa)}(t)), \quad \psi_i^{(\kappa,\zeta,m)}(t) \df \varPsi_m(\vc{w}_i^{(h,\kappa,\zeta)}(t)).
\end{equation}
Moreover, the entropy balance in \eqref{eq:discrete_entropy_bound} holds as an equality when the numerical interface flux is entropy conservative, and the right-hand side vanishes for periodic boundary conditions.
\end{theorem}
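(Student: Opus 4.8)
The plan is to reduce the claim to the entropy-stability analysis of the equivalent hybridized SBP formulation \eqref{eq:skew_hybridized}, following the arguments of Chan \cite{chan_discretely_entropy_conservative_dg_sbp_18} and Chan and Wilcox \cite{chan_wilcox_entropystable_curvilinear_19}, which we outline. First I would left-multiply \eqref{eq:skew_hybridized} by the projected entropy variables $\vc{\tilde{w}}^{(h,\kappa,e)}(t)$, sum over $e \in \{1:\ncons\}$, and then sum over all elements $\kappa \in \{1:\nelem\}$. On the left-hand side I would use that $\mat{\tilde{M}}^{(\kappa)}$ is symmetric (its inverse $\mat{V}^\T\mat{W}(\mat{J}^{(\kappa)})^{-1}\mat{V}$ being symmetric, since $\mat{M}$ is the identity) together with the definition \eqref{eq:entropy_expansion} of the weight-adjusted entropy projection, which yields $\mat{\tilde{M}}^{(\kappa)}\vc{\tilde{w}}^{(h,\kappa,e)} = \mat{V}^\T\mat{W}\mat{J}^{(\kappa)}[\mathcal{W}_e(\vc{u}_1^{(h,\kappa)}),\ldots]^\T$; combining this with $\vc{u}^{(h,\kappa,e)} = \mat{V}\vc{\tilde{u}}^{(h,\kappa,e)}$ collapses the left-hand side to $\sum_{\kappa}\sum_i \omega^{(i)}\fn{J}^{(\kappa)}(\vec{\xi}^{(i)})\,\vc{\mathcal{W}}(\vc{u}_i^{(h,\kappa)})^\T \tfrac{\dd}{\dd t}\vc{u}_i^{(h,\kappa)}$. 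Since $\vc{\mathcal{W}} = \nabla_{\vc{U}}\mathcal{S}$ and the mesh is time-invariant, the chain rule applied in $t$ identifies this with $\tfrac{\dd}{\dd t}\sum_\kappa (\vc{1}^{(\nvolnodes)})^\T\mat{W}\mat{J}^{(\kappa)}\vc{s}^{(h,\kappa)}(t)$, the left-hand side of \eqref{eq:discrete_entropy_bound}. Note that no quadrature-exactness hypothesis is needed here, in contrast with the conservation proof.

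Next I would treat the flux-differencing term. Writing $\vc{\bar{w}}^{(h,\kappa,e)} \df [\mat{V};\mat{V}^f]\vc{\tilde{w}}^{(h,\kappa,e)}$ for the concatenated volume and facet entropy variables and decomposing $2\mat{\bar{Q}}^{(\kappa,m)}$ into its symmetric and skew-symmetric parts, I would use the symmetry in $(i,j)$ of the two-point-flux blocks of $\mat{\bar{F}}^{(\kappa,m,e)}$, the consistency of the two-point flux, Tadmor's condition \eqref{eq:tadmor} (so that the skew part telescopes into differences of the flux potential $\varPsi_m$), the SBP-like property \eqref{eq:hybridized_sbp_phys}, and the discrete metric identities \eqref{eq:discrete_metric_identities} to reduce the contracted flux-differencing term to a purely facet-supported quantity; invoking $\vc{W}^\T\vc{\fn{f}}_m(\vc{\mathcal{U}}(\vc{W})) = \varPsi_m(\vc{W}) + \mathcal{F}_m(\vc{\mathcal{U}}(\vc{W}))$ shows that what survives is $-\sum_\kappa\sum_m\sum_\zeta (\vc{1}^{(\nfacnodes)})^\T\mat{B}^{(\zeta)}\mat{J}^{(\kappa,\zeta)}\mat{N}^{(\kappa,\zeta,m)}$ applied to the nodal values $\mathcal{F}_m(\vc{\mathcal{U}}(\vc{w}_i^{(h,\kappa,\zeta)}))$, i.e.\ an \emph{entropy-flux} boundary term. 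Contracting the facet-correction term $+\sum_m \mat{N}^{(\kappa,\zeta,m)}\vc{f}^{(\kappa,\zeta,m,e)}$ in \eqref{eq:skew_hybridized} with the facet entropy variables and using the same flux-potential identity, its entropy-flux part exactly cancels this contribution, leaving only the flux-potential term $+\sum_\kappa\sum_m\sum_\zeta (\vc{1}^{(\nfacnodes)})^\T\mat{B}^{(\zeta)}\mat{J}^{(\kappa,\zeta)}\mat{N}^{(\kappa,\zeta,m)}\vc{\psi}^{(\kappa,\zeta,m)}$, which is the first term on the right of \eqref{eq:discrete_entropy_bound}.

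Finally, I would collect the interface-penalty term $-\sum_e (\vc{w}^{(h,\kappa,e)})^\T(\mat{R}^{(\zeta)})^\T\mat{B}^{(\zeta)}\mat{J}^{(\kappa,\zeta)}\vc{f}^{(*,\kappa,\zeta,e)}$ together with the surviving flux-potential terms and sum over all element--facet pairs. Using \cref{asm:conformal} to pair each interior interface node $i$ of element $\kappa$ on $\hat{\Gamma}^{(\zeta)}$ with the matching node $j$ of its neighbour $\nu$ on $\hat{\Gamma}^{(\eta)}$ --- for which $\omega^{(\zeta,i)}\fn{J}^{(\kappa,\zeta)} = \omega^{(\eta,j)}\fn{J}^{(\nu,\eta)}$ and $\vec{n}^{(\kappa,\zeta)} = -\vec{n}^{(\nu,\eta)}$ --- together with the conservation property \eqref{eq:conservative_flux} of the numerical interface flux, the two one-sided contributions at such an interface combine into $\sum_i \omega^{(\zeta,i)}\fn{J}^{(\kappa,\zeta)}\big[(\vec{\varPsi}(\vc{w}_i^{(h,\kappa,\zeta)}) - \vec{\varPsi}(\vc{w}_j^{(h,\nu,\eta)}))\cdot\vec{n}^{(\kappa,\zeta)} - (\vc{w}_i^{(h,\kappa,\zeta)} - \vc{w}_j^{(h,\nu,\eta)})^\T \vc{f}_i^{(*,\kappa,\zeta)}\big]$, and the entropy condition \eqref{eq:tadmor_directional} then shows this is $\leq 0$, with equality when the interface flux is entropy conservative. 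What remains on the right-hand side is exactly the sum over boundary facets $\Gamma^{(\kappa,\zeta)}\subset\partial\Omega$ appearing in \eqref{eq:discrete_entropy_bound}, which is empty in the periodic case. The main obstacle is the bookkeeping in the second step: one must track that the discrete metric identities and Tadmor's condition leave behind an entropy-flux boundary term rather than a flux-potential one, and verify that the curvilinear facet correction $\mat{N}^{(\kappa,\zeta,m)}\vc{f}^{(\kappa,\zeta,m,e)}$ in \eqref{eq:skew_hybridized} --- precisely the term absent in an affine formulation --- is what converts it into the flux-potential form required by the target bound, all while remaining compatible with the entropy inequality at element interfaces.
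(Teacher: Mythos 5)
Your proposal is correct and follows essentially the same route as the paper's proof: contract \eqref{eq:skew_hybridized} with the projected entropy variables, use the SBP-like property \eqref{eq:hybridized_sbp_phys}, the discrete metric identities, and Tadmor's condition to obtain the local entropy balance, then sum over elements and invoke \cref{asm:conformal} together with the conservation and entropy conditions on the interface flux. The paper's proof is terser, deferring the details to Chan and Wilcox \cite{chan_wilcox_entropystable_curvilinear_19} and Chen and Shu \cite{chen_shu_entropy_stable_dgsbp_17}, whereas you correctly supply them --- including the key identity $\mat{\tilde{M}}^{(\kappa)}\vc{\tilde{w}}^{(h,\kappa,e)} = \mat{V}^\T\mat{W}\mat{J}^{(\kappa)}[\mathcal{W}_e(\vc{u}_1^{(h,\kappa)}),\ldots]^\T$ that makes the chain-rule step work for the weight-adjusted projection, and the cancellation between the entropy-flux part of the facet correction and the flux-differencing boundary residual.
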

\begin{proof}
The proof of entropy conservation for a non-dissipative interface flux is identical to that of \cite[Theorem 2]{chan_wilcox_entropystable_curvilinear_19}, wherein we left-multiply \eqref{eq:skew_hybridized} by $(\vc{w}^{(h,\kappa,e)}(t))^\T$, sum over $e \in \{1:\ncons\}$, and use the metric identities in \eqref{eq:discrete_metric_identities} as well as the SBP property in \eqref{eq:hybridized_sbp_phys} to obtain the local entropy balance
\begin{equation}\label{eq:local_entropy_balance}
\begin{aligned}
\frac{\dd}{\dd t}\Big(\big(\vc{1}^{(\nvolnodes)}\big)^\T \mat{W}\mat{J}^{(\kappa)} \vc{s}^{(h,\kappa)}(t)\Big) &= \sum_{m=1}^d \big(\vc{1}^{(\nfacnodes)}\big)^\T\mat{B}^{(\zeta)}\mat{J}^{(\kappa,\zeta)}\mat{N}^{(\kappa,\zeta,m)}\vc{\psi}^{(\kappa,\zeta,m)}(t) \\ & - \sum_{e=1}^{\ncons}\sum_{\zeta=1}^{\nfac}\big(\vc{w}^{(h,\kappa,\zeta,e)}(t)\big)^\T\mat{B}^{(\zeta)}\mat{J}^{(\kappa,\zeta)}\vc{f}^{(*,\kappa,\zeta,e)}(t).
\end{aligned}
\end{equation}
For an entropy-conservative interface flux, summing \eqref{eq:local_entropy_balance} over all elements and splitting the interface contributions between adjacent elements results in a global statement of entropy conservation, corresponding to \eqref{eq:discrete_entropy_bound} being satisfied as an equality, where the boundary contributions vanish similarly to the interior interface contributions for periodic problems. The entropy inequality for an entropy-stable interface flux then follows in a straightforward manner, for example, from the analysis in \cite[Theorems 3.4 and 4.3]{chen_shu_entropy_stable_dgsbp_17}.
\end{proof}

\section{Numerical experiments}\label{sec:numerical}
We now present numerical experiments assessing the accuracy and robustness of entropy-conservative and entropy-stable DSEMs using tensor-product as well as multidimensional SBP operators on triangles and tetrahedra through the numerical solution of the Euler equations using \texttt{StableSpectralElements.jl}. The parameters used to run such simulations as well as the Jupyter notebooks used to generate the figures appearing in this section are provided in this paper's reproducibility repository, which is available at \url{https://github.com/tristanmontoya/ReproduceEntropyStableDSEM}.

\subsection{Euler equations}
The Euler equations constitute a system of $\ncons = d + 2$ coupled nonlinear PDEs governing the conservation of mass, momentum, and energy for a compressible, inviscid, and adiabatic fluid. Such a system takes the form of \eqref{eq:cons_law}, with the solution variables and flux components given by
\begin{equation}\label{eq:euler}
\vc{\fn{u}}(\vec{x},t) \df \mqty[\rho(\vec{x},t) \\ \rho(\vec{x},t) V_1(\vec{x},t) \\ \vdots \\ \rho(\vec{x},t) V_d(\vec{x},t) \\ \fn{E}(\vec{x},t)], \quad \vc{\fn{f}}_m(\vc{\fn{u}}(\vec{x},t)) \df \mqty[\rho(\vec{x},t)\fn{v}_m(\vec{x},t) \\ \rho(\vec{x},t)\fn{v}_1(\vec{x},t)\fn{v}_m(\vec{x},t)  + \fn{p}(\vec{x},t)\delta_{1m} \\ \vdots \\ \rho(\vec{x},t)\fn{v}_d(\vec{x},t)\fn{v}_m(\vec{x},t) + \fn{p}(\vec{x},t)\delta_{dm}  \\ \fn{v}_m(\vec{x},t)(\fn{E}(\vec{x},t) + \fn{p}(\vec{x},t))],
\end{equation}
where $\rho(\vec{x},t) \in \mathbb{R}$ denotes the fluid density, $\fvec{v}(\vec{x},t) \in \mathbb{R}^d$ denotes the flow velocity, $\fn{E}(\vec{x},t) \in \mathbb{R}$ denotes the total energy per unit volume, and $\fn{p}(\vec{x},t) \in \mathbb{R}$ denotes the pressure. We obtain the pressure using the equation of state for an ideal gas with constant specific heat,
\begin{equation}
\fn{p}(\vec{x},t) = (\gamma-1)\bigg(\fn{E}(\vec{x},t) - \frac{1}{2}\rho(\vec{x},t)\lVert \fvec{v}(\vec{x},t)\rVert^2\bigg),
\end{equation}
where $\gamma > 1$ is the specific heat ratio, which we take as $1.4$ for air in all numerical experiments. The Euler equations are hyperbolic for solutions belonging to the admissible set 
\begin{equation}
\Upsilon \df \big\{ \vc{\fn{u}}(\vec{x},t) \in \mathbb{R}^{d+2} : \fn{p}(\vec{x},t),\rho(\vec{x},t) > 0 \big\}.
\end{equation}
Although there exist many entropy--entropy flux pairs which satisfy the conditions of \cref{def:entropy} for the Euler equations, we restrict our attention to the choice of
\begin{equation}\label{eq:euler_entropy}
\mathcal{S}(\vc{U}(\vec{x},t)) \df - \frac{\rho(\vec{x},t)}{\gamma - 1} \ln(\frac{P(\vec{x},t)}{\rho(\vec{x},t)^\gamma}),\quad
\vec{\mathcal{F}}(\vc{U}(\vec{x},t)) \df - \frac{\rho(\vec{x},t)\vec{\fn{v}}(\vec{x},t) }{\gamma - 1} \ln(\frac{P(\vec{x},t)}{\rho(\vec{x},t)^\gamma}),
\end{equation}
which also symmetrizes the viscous terms of the compressible Navier--Stokes equations with heat conduction, and was shown by Hughes \etal \cite{hughes_franca_mallet_entropy_stable_fem_86} to be the unique member (up to an affine transformation) of Harten's family of entropy--entropy flux pairs \cite{harten_entropy_symmetric_form_83} to do so.
\subsection{Entropy-conservative and entropy-stable flux functions}
To obtain an entropy-conservative two-point flux in the sense of \cref{def:ec_flux} with respect to the entropy function and entropy flux in \eqref{eq:euler_entropy},
we first define the arithmetic mean and logarithmic mean, respectively, as
\begin{equation}
\avg{a} \df \frac{1}{2}\big(a^- + a^+\big), \quad \avg{a}_{\mathrm{ln}} \df \begin{cases}\dfrac{a^+ - a^-}{\ln(a^+) - \ln(a^-)},  &a^- \neq a^+\\
a^-, \quad &a^- = a^+
\end{cases},
\end{equation}
where we use Taylor-series approximations from Ranocha \etal \cite[Algorithms 2 and 3]{ranocha_efficient_implementation_23} to compute the logarithmic mean and its reciprocal in cases for which $a^-$ and $a^+$ are nearly equal. The particular entropy-conservative flux used in this work was proposed by Ranocha \cite{ranocha_phd_thesis_18,ranocha_ec_kep_20}, and is given by
\begin{equation}\label{eq:ranocha_flux}
\vc{\fn{f}}_m^\#(\vc{U}^-,\vc{U}^+) \df \mqty[\avg{\rho}_{\mathrm{ln}} \avg{V_m}  \\\avg{\rho}_{\mathrm{ln}} \avg{V_m} \avg{V_1} + \avg{P}\delta_{1m}\\ \vdots \\ \avg{\rho}_{\mathrm{ln}} \avg{V_m}\avg{V_d} + \avg{P}\delta_{dm} \\ \tfrac{1}{2}\avg{\rho}_{\mathrm{ln}} \avg{V_m} \big( \vec{V}^- \cdot \vec{V}^+ + \tfrac{1}{\gamma-1}\avg{\rho/P}_{\mathrm{ln}}^{-1}\big)+ \tfrac{1}{2}\big(P^-V_m^+ + P^+V_m^-\big)
].
\end{equation}
In addition to the entropy conservation property in \eqref{eq:tadmor}, Ranocha's flux is kinetic energy preserving and pressure equilibrium preserving (see, for example, Ranocha and Gassner \cite{ranocha_pressure_oscillations_22}). The interface flux takes the form of \eqref{eq:num_flux}, where we use Davis's wave speed estimate \cite{davis_godunov_88}, which is given by
\begin{equation}\label{eq:wave_speed}
\lambda(\vc{U}^-,\vc{U}^+, \vec{n}) \df \max\Big(\big\lvert\vec{V}^- \cdot \vec{n}\big\rvert, \big\lvert\vec{V}^+ \cdot \vec{n}\big\rvert\Big) + \max\Big(\sqrt{\gamma P^- / \rho^-}, \sqrt{\gamma P^+ / \rho^+}\Big).
\end{equation}
In the results which are to follow, we refer to the schemes employing local Lax--Friedrichs dissipation as \emph{entropy-stable} methods. We also implement a variant without dissipation, in which the second term on the right-hand side of \eqref{eq:num_flux} is absent; such schemes are denoted as \emph{entropy-conservative} methods.

\subsection{Curvilinear meshes}

The problems considered in this work are defined on the spatial domain $\Omega \df (0,L)^d$, where $L \in \mathbb{R}^+$ and $d \in \{2,3\}$. The mesh is generated by beginning with a Cartesian grid with $M$ edges in each direction and splitting each quadrilateral into two triangles or each hexahedron into six tetrahedra, resulting in $\nelem = 2 M^2$ in two dimensions and $\nelem = 6 M^3$ in three dimensions. Recalling \cref{rmk:connectivity}, we use the second algorithm described in \cite[Section 2.3]{sherwin_karniadakis_tetrahedra_hp_fem_96}, which was originally proposed by Warburton \etal \cite{warburton_unstructured_connectivity_95}, to orient the local coordinate systems on each element in order to ensure that \cref{asm:conformal} is satisfied for tetrahedral meshes. For both the triangular and tetrahedral case, we use isoparametric mappings, corresponding to \eqref{eq:poly_mapping} with the choice of $p_g = p = q$, where the mapping nodes are obtained using the interpolatory warp-and-blend procedure from Chan and Warburton \cite{chan_warburton_interpolation_nodes_15}. An affine transformation is used to obtain the positions of the mapping nodes on each element of the split Cartesian mesh. Following \cite[Section 5]{chan_delrey_carpenter_gauss_collocation_19}, the mesh is then warped by perturbing the mapping node positions as
\begin{equation}
\begin{aligned}
\tilde{x}_1 &\gets x_1 + \varepsilon L \cos(\tfrac{\pi}{L}\big(x_1 - \tfrac{1}{2}\big) )\cos(\tfrac{3\pi}{L}\big(x_2 - \tfrac{1}{2} \big) ),\\
\tilde{x}_2 &\gets x_2 + \varepsilon L \sin(\tfrac{4\pi}{L}\big(\tilde{x}_1 - \tfrac{1}{2} \big) )\cos(\tfrac{\pi}{L}\big(x_2 - \tfrac{1}{2}\big) ),
\end{aligned}
\end{equation}
in two dimensions, and as
\begin{equation}
\begin{aligned}
\tilde{x}_2 &\gets x_2 + \varepsilon L\cos(\tfrac{3\pi}{L}\big(x_1 - \tfrac{1}{2} \big) )\cos(\tfrac{\pi}{L}\big(x_2 - \tfrac{1}{2}\big) )\cos(\tfrac{\pi}{L}\big(x_3 - \tfrac{1}{2} \big) ),\\
\tilde{x}_1 &\gets x_1 + \varepsilon L \cos(\tfrac{\pi}{L}\big(x_1 - \tfrac{1}{2} \big) )\sin(\tfrac{4\pi}{L}\big(\tilde{x}_2 - \tfrac{1}{2}\big) ) \cos(\tfrac{\pi}{L}\big(x_3 - \tfrac{1}{2} \big) ),\\
\tilde{x}_3 &\gets x_3 + \varepsilon L \cos(\tfrac{\pi}{L}\big(\tilde{x}_1 - \tfrac{1}{2} \big)) \cos(\tfrac{2\pi}{L}\big(\tilde{x}_2 - \tfrac{1}{2}\big) ) \cos(\tfrac{\pi}{L}\big(x_3 - \tfrac{1}{2} \big)),
\end{aligned}
\end{equation}
in three dimensions, where, as in \cite[Section 7.1]{montoya_sem_23}, we take $\varepsilon = 1/16$ in both cases. The new node positions $\tilde{\vec{x}}$ are then used to define the mapping in \eqref{eq:poly_mapping}. Finally, the metric terms are computed using the approach described in \cref{sec:metrics}, where in the three-dimensional case we use the conservative curl formulation in \eqref{eq:conservative_curl}, with the nodes used for the interpolation of degree $q+1$ in \eqref{eq:metric_interpolants} again obtained using the interpolatory warp-and-blend procedure from \cite{chan_warburton_interpolation_nodes_15}. Examples of curvilinear meshes and the mapping nodes used to obtain such meshes are shown in \cref{fig:curved_meshes}.
\begin{figure}[t!]
\begin{subfigure}{0.325\textwidth}
\centering
\includegraphics[height=42mm]{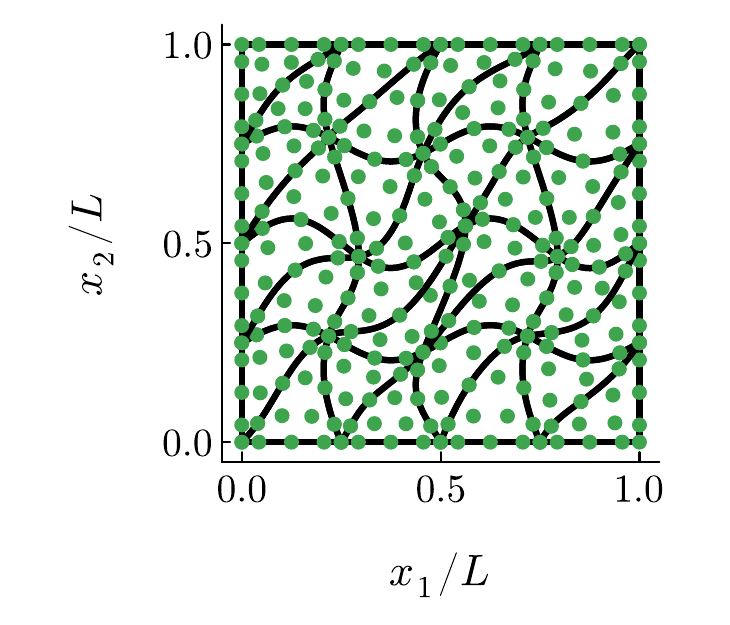}
\caption{Triangular mesh}
\end{subfigure}~
\begin{subfigure}{0.325\textwidth}
\centering
\includegraphics[height=42mm]{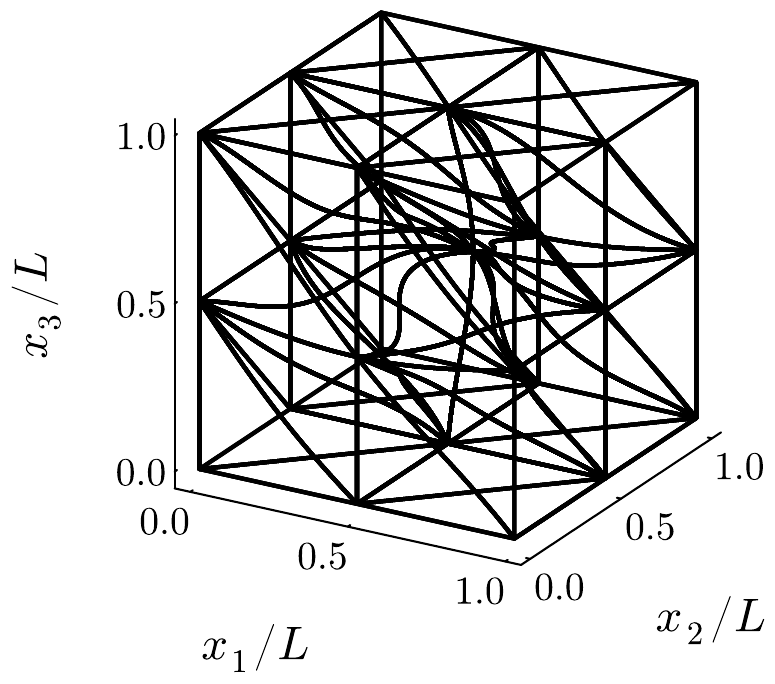}
\caption{Tetrahedral mesh}
\end{subfigure}
\begin{subfigure}{0.325\textwidth}
\centering
\includegraphics[height=42mm]{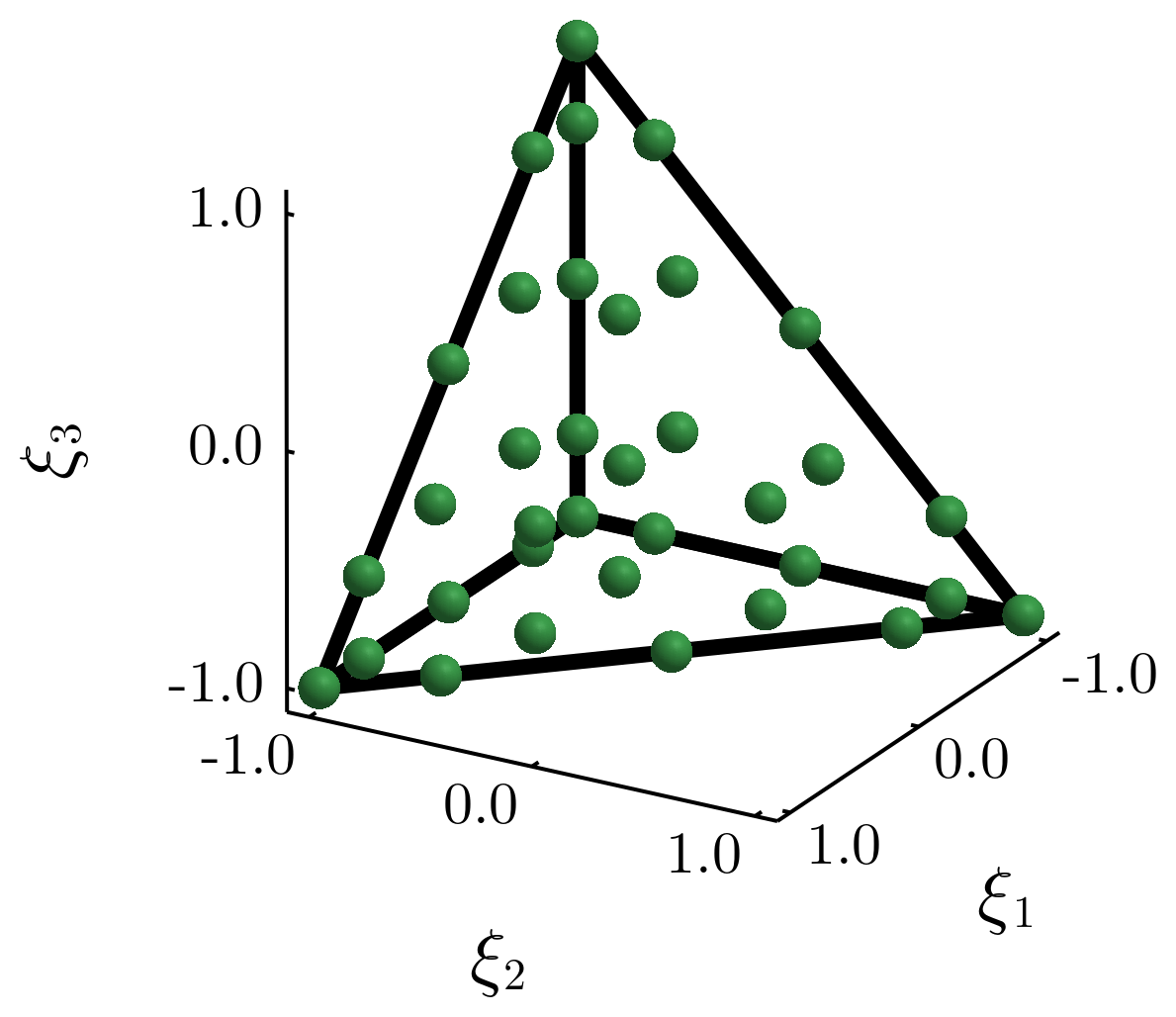}
\caption{Tetrahedral mapping nodes}
\end{subfigure}
\caption{Examples of warped meshes and mapping nodes for $p_g = 4$}\label{fig:curved_meshes}
\end{figure}

\subsection{Accuracy tests}
We assess the accuracy of the proposed entropy-conservative and entropy-stable discretizations of the Euler equations under refinement with respect to the element size $h$ as well as the polynomial degree $p$ in the context of smooth problems with known analytical solutions. The initial conditions are prescribed in terms of the primitive variables $\rho_0(\vec{x})$, $\fvec{v}_0(\vec{x})$, and $P_0(\vec{x})$, which are used to obtain the right-hand side of \eqref{eq:ic} as 
\begin{equation}
\vc{\fn{u}}^0(\vec{x}) \df \mqty[ \rho_0(\vec{x})\\  \rho_0(\vec{x}) \vec{V}_0(\vec{x})\\ \tfrac{1}{\gamma-1} P_0(\vec{x}) + \tfrac{1}{2}\rho_0(\vec{x}) \lVert \fvec{v}_0(\vec{x}) \rVert^2].
\end{equation}
We consider the smooth density wave problem from Jiang and Shu \cite[Section 7]{jiang_shu_efficient_implementation_weno_96}, for which the initial values of the primitive variables are given by
\begin{equation}
\rho_0(\vec{x}) \df 1 + \frac{1}{5}\sin\Bigg(\frac{2\pi}{L}\sum_{m=1}^d x_m \Bigg), \quad \vec{V}_0(\vec{x}) \df \Big[1, \ldots, 1\Big]^\T, \quad P_0(\vec{x}) \df 1,
\end{equation}
on the domain $\Omega \df (0,2)^d$, with periodic boundary conditions applied in all directions. The solution is advanced in time for one period of wave propagation (i.e.\ until a final time of $T = 2$) using a Julia implementation \cite{rackauckas_julia_diffeq_17} of the eighth-order Dormand--Prince algorithm described in \cite[Section II.5]{hairer_93}, with the time step taken sufficiently small for the temporal discretization error to be negligible in comparison to that due to the spatial discretization. \par
\begin{figure}[t!]
\centering
\begin{subfigure}{0.325\textwidth}
\includegraphics[height=42mm]{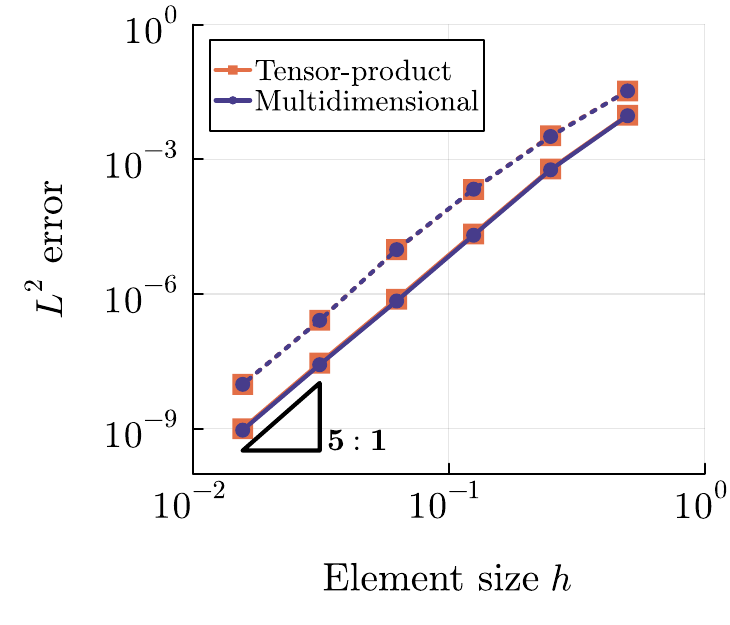}
\caption{$h$-refinement on triangles, $p=4$}
\end{subfigure}~
\begin{subfigure}{0.325\textwidth}
\includegraphics[height=42mm]{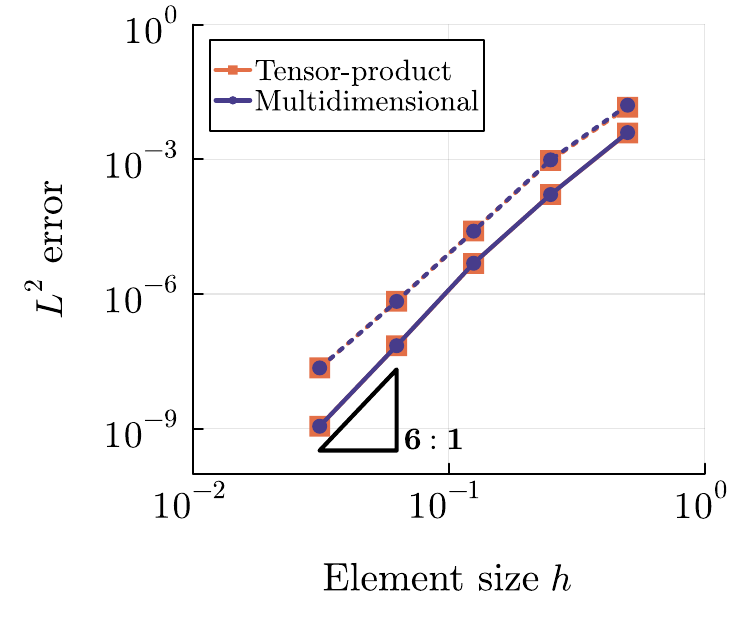}
\caption{$h$-refinement on triangles, $p=5$}
\end{subfigure}~
\begin{subfigure}{0.325\textwidth}
\includegraphics[height=42mm]{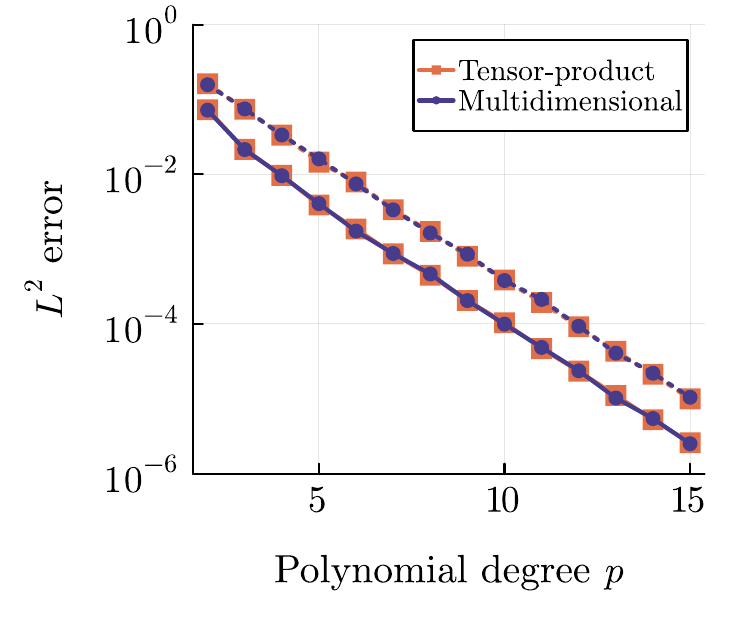}
\caption{$p$-refinement on triangles, $M=4$}
\end{subfigure}

\bigskip
\begin{subfigure}{0.325\textwidth}
\includegraphics[height=42mm]{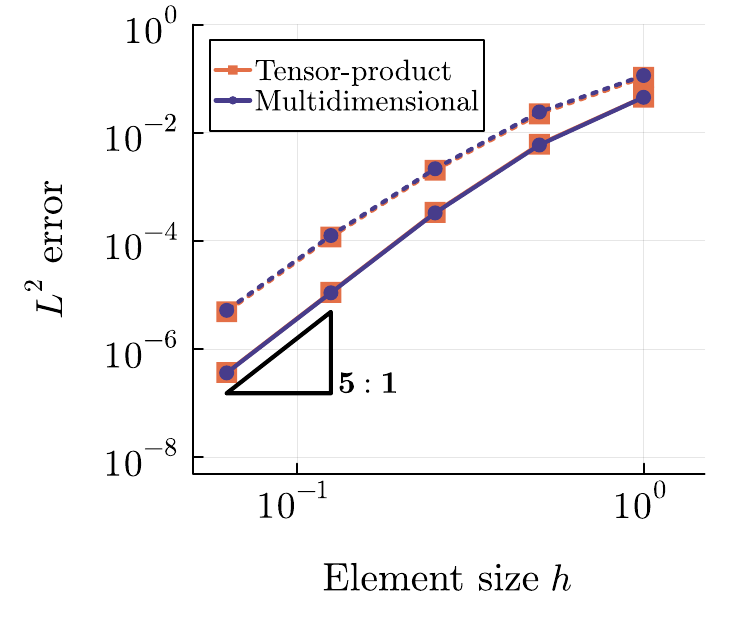}
\caption{$h$-refinement on tetrahedra, $p=4$}
\end{subfigure}~
\begin{subfigure}{0.325\textwidth}
\includegraphics[height=42mm]{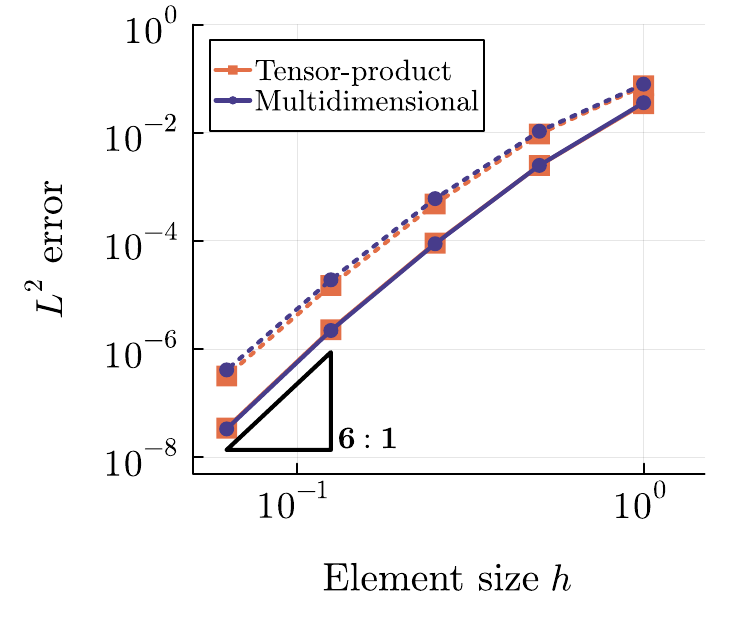}
\caption{$h$-refinement on tetrahedra, $p=5$}
\end{subfigure}~
\begin{subfigure}{0.325\textwidth}
\includegraphics[height=42mm]{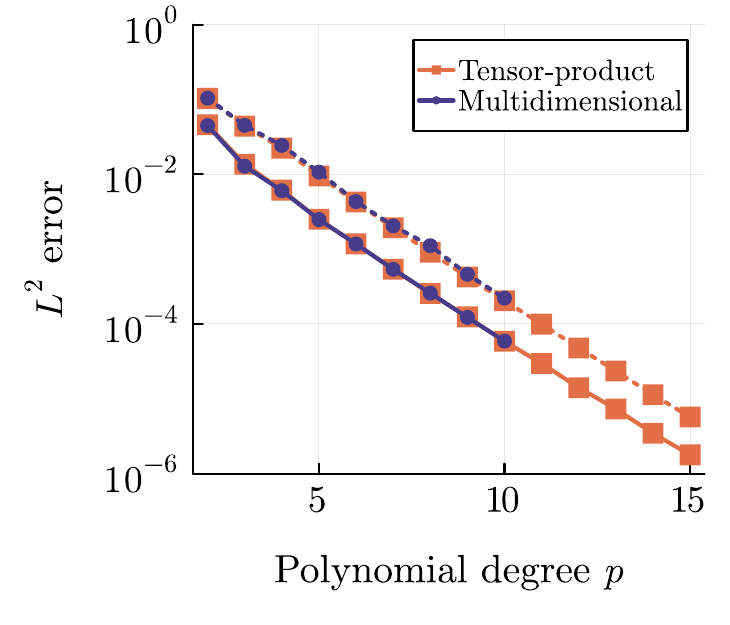}
\caption{$p$-refinement on tetrahedra, $M=4$}
\end{subfigure}

\caption{Convergence with respect to $h$ and $p$ for discretizations of the Euler equations; dashed and solid lines denote density error for entropy-conservative and entropy-stable schemes, respectively}\label{fig:accuracy}
\end{figure}

Convergence is examined with respect to the nominal element size (taken here to be $h \df L/M$) as well as the polynomial degree $p$ for entropy-conservative and entropy-stable DSEMs using the tensor-product and multidimensional SBP operators on triangles and tetrahedra considered in \cref{sec:implementation}. The degree $p$ of the solution expansion in \eqref{eq:poly_approx} is taken to be equal to the degree $q$ of the SBP operators, and we report the $L^2$ norm of the density error, which is computed numerically using a quadrature rule of degree 35. Similar convergence behaviour was observed for the other solution variables. \Cref{fig:accuracy} demonstrates optimal $\mathcal{O}(h^{p+1})$ algebraic convergence under $h$-refinement for the entropy-stable discretizations (i.e.\ those including interface dissipation) as well as exponential convergence under $p$-refinement, where we recall from \cref{sec:implementation} that the multidimensional SBP operators on tetrahedra considered here are only available for degrees up to ten. For a given mesh and polynomial degree, the error norms obtained for the proposed tensor-product discretizations are found to be very close to those obtained for their multidimensional counterparts, which suggests that their algorithmic advantages discussed in \cref{sec:implementation} with respect to the ability to exploit operator sparsity and sum factorization do not come at the expense of accuracy.

\begin{remark}
The numerical results are consistent with the conservation property established in \cref{thm:conservation}, with the time derivative on the left-hand side of \eqref{eq:conservation} remaining close to machine precision for all solution variables at approximately 100 equispaced snapshots taken during each test. Furthermore, the rates of entropy dissipation given by the left-hand side of \eqref{eq:discrete_entropy_bound} are verified for the entropy-conservative and entropy-stable schemes to be zero and non-positive, respectively, at all snapshots (up to roundoff error levels close to machine precision), as is consistent with \cref{thm:entropy_stability}.
\end{remark}

\subsection{Robustness tests}
High-order methods such as DSEMs are prone to numerical stability issues, particularly in the context of under-resolved nonlinear problems, in which high-frequency numerical modes are produced and potentially amplified by the discretization, resulting in non-physical blow-up or negative values of thermodynamic quantities such as pressure or density (i.e.\ corresponding to solutions outside the admissible set $\Upsilon$). Such under-resolution commonly arises in simulations of turbulent fluid flow problems, which are characterized by the cascade of energy from larger eddies to those progressively smaller in size, eventually dissipating as heat due to the viscosity of the fluid (see, for example, Pope \cite[Chapter 6]{pope_turbulent_flows}). As the Euler equations do not model physical viscosity, the only dissipation present is that inherent in the numerical scheme, which is typically small for a high-order method, thus exposing the potentially destabilizing effects of under-resolved eddies. We therefore use simulations of inviscid vortical flows to mimic worst-case scenarios with respect to under-resolved turbulence, where we are interested in assessing whether the simulations run to completion and whether the semi-discrete entropy bounds established in \cref{sec:analysis} are satisfied, rather than in evaluating the accuracy of such simulations. In the two-dimensional case, we consider the Kelvin--Helmholtz instability (KHI) problem described by Rueda-Ram\'irez and Gassner \cite{rueda_ramirez_fv_limiter_21} and used for robustness tests by Chan \etal \cite{chan_entropy_projection_22}, for which we define the smoothed step function
\begin{equation}
\fn{B}(\vec{x}) \df \tanh(15(x_2 - \tfrac{1}{2})) - \tanh(15(x_2 - \tfrac{3}{2}))
\end{equation}
in order to obtain the initial condition, which is given in terms of the primitive variables by
\begin{equation}
\rho_0(\vec{x}) \df \frac{1}{2} + \frac{3}{4}B(\vec{x}), \quad \vec{V}_0(\vec{x}) \df \bigg[\frac{1}{2}\big(B(\vec{x})-1), \, \frac{1}{10}\sin(2\pi x_1)\bigg]^\T, \quad P_0(\vec{x}) \df 1,
\end{equation}
on the domain $\Omega \df (0,2)^2$, with periodic boundary conditions in both directions. As in \cite{chan_entropy_projection_22}, we integrate until a final time of $T = 15$. In three dimensions, we consider an inviscid Taylor--Green vortex (TGV) problem, for which the initial condition is given on the periodic domain $\Omega \df (0,2\pi)^3$ as
\begin{equation}
\begin{multlined}
\rho_0(\vec{x}) \df 1, \quad \vec{V}_0(\vec{x}) \df \big[\sin(x_1)\cos(x_2)\cos(x_3),\, -\cos(x_1)\sin(x_2)\cos(x_3),\, 0\big]^\T,\\
P_0(\vec{x}) \df \frac{1}{\gamma \mathrm{Ma}^2} + \frac{1}{16}\Big(\cos(2x_1) + 2\cos(2x_2) + \cos(2x_1)\cos(2x_3) + \cos(2x_2)\cos(2x_3)\Big),
\end{multlined}
\end{equation}
where $\mathrm{Ma} \in \mathbb{R}^+$ is the nominal Mach number. We run the TGV simulations until a final time of $T = 14$, and, as in Pazner and Persson \cite{pazner_persson_entropy_stable_line_dg_19}, we consider the nearly incompressible case of $\mathrm{Ma} = 0.1$ in addition to the case of $\mathrm{Ma} = 0.7$, where the latter is expected to pose a greater challenge to the robustness of the proposed methods, specifically with respect to positivity preservation. \par
\begin{figure}[t!]
\centering
\begin{subfigure}{0.325\textwidth}
\includegraphics[height=42mm]{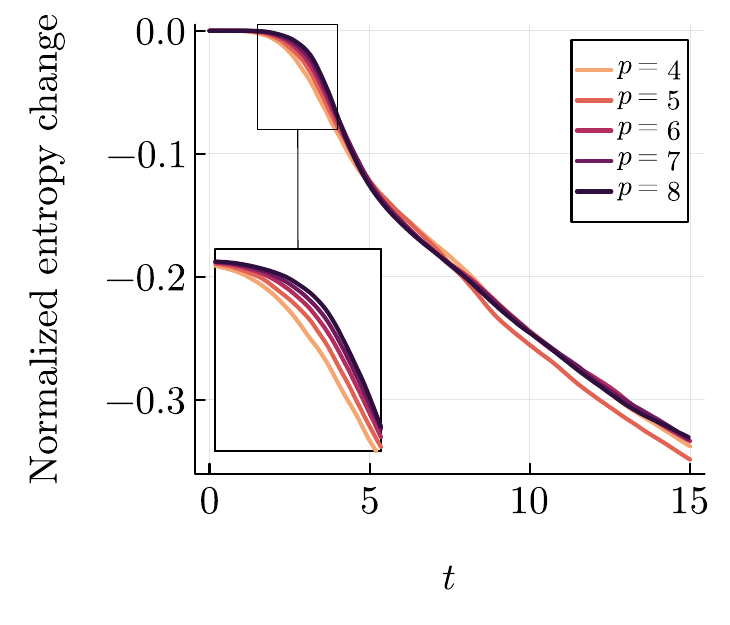}
\caption{Kelvin--Helmholtz instability, tensor-product operators on triangles}
\end{subfigure}~
\begin{subfigure}{0.325\textwidth}
\includegraphics[height=42mm]{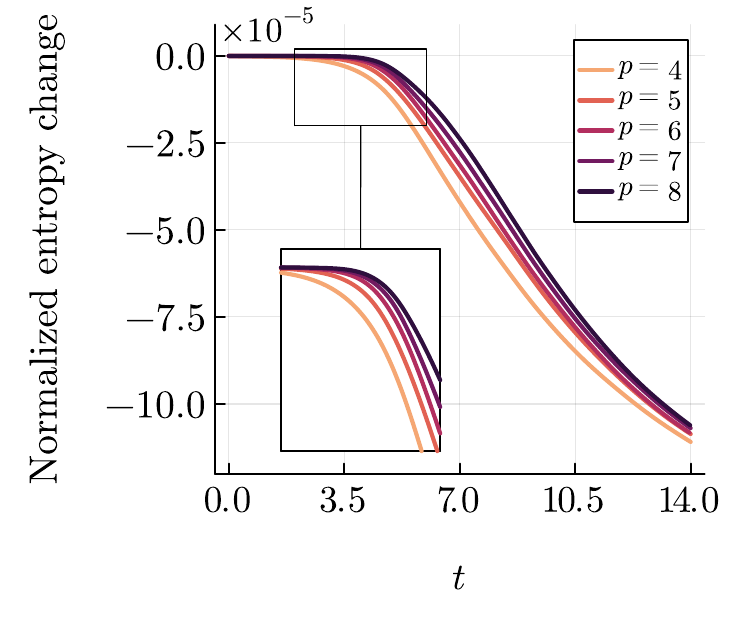}
\caption{Taylor--Green vortex, $\mathrm{Ma} = 0.1$, tensor-product operators on tetrahedra}
\end{subfigure}~
\begin{subfigure}{0.325\textwidth}
\includegraphics[height=42mm]{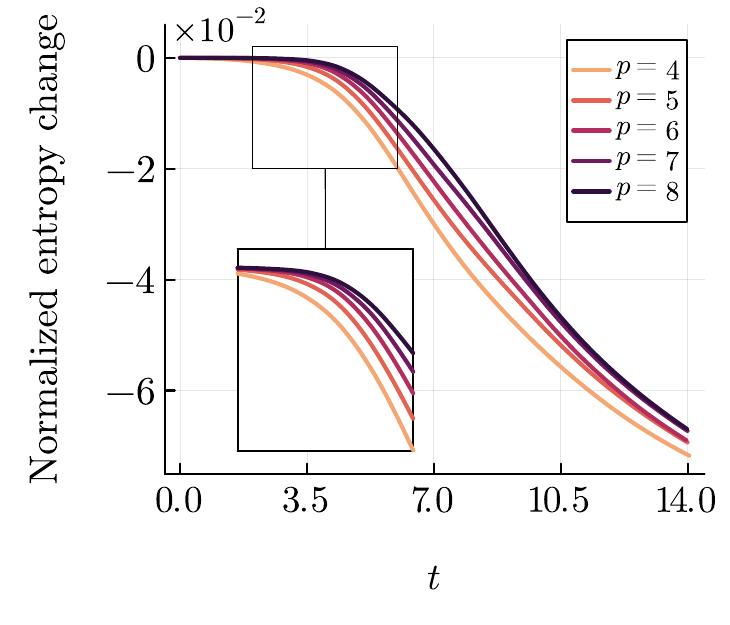}
\caption{Taylor--Green vortex, $\mathrm{Ma} = 0.7$, tensor-product operators on tetrahedra}
\end{subfigure}

\bigskip
\begin{subfigure}{0.325\textwidth}
\includegraphics[height=42mm]{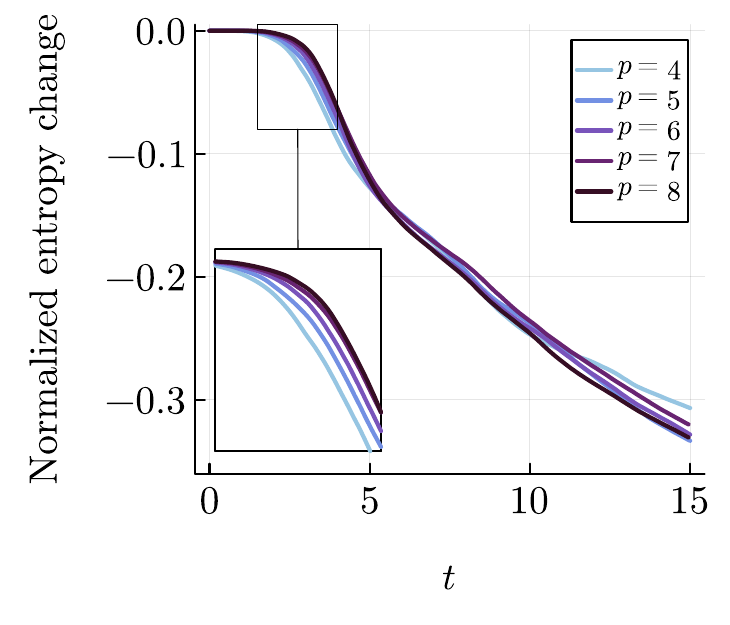}
\caption{Kelvin--Helmholtz instability, multidimensional operators on triangles}
\end{subfigure}~
\begin{subfigure}{0.325\textwidth}
\includegraphics[height=42mm]{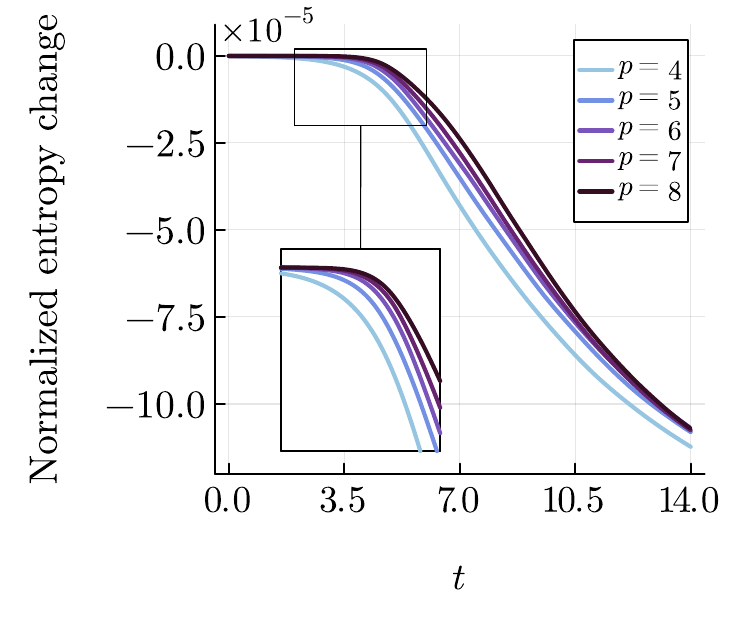}
\caption{Taylor--Green vortex, $\mathrm{Ma} = 0.1$, multidimensional operators on tetrahedra}
\end{subfigure}~
\begin{subfigure}{0.325\textwidth}
\includegraphics[height=42mm]{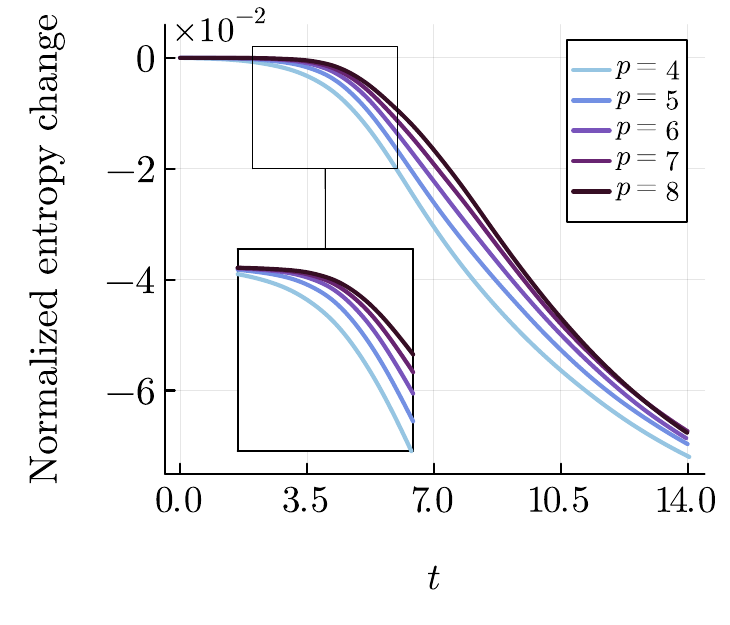}
\caption{Taylor--Green vortex, $\mathrm{Ma} = 0.7$, multidimensional operators on tetrahedra }
\end{subfigure}
\caption{Normalized entropy change for entropy-stable discretizations of the Euler equations on triangles and tetrahedra\label{fig:entropy}} 
\end{figure}
The Euler equations are solved for the above initial conditions using the proposed entropy-conservative and entropy-stable DSEMs for polynomial degrees 4 to 8, taking $M = 16$ for the KHI problem and $M = 4$ for the TGV problem. We integrate in time using the same explicit eighth-order Dormand--Prince method used for the accuracy tests. The time step is taken to be sufficiently small to ensure that the reported instances of instability result purely from the spatial discretization, and we did not find any of the reported instabilities to be remedied by decreasing the time step size. Without any additional stabilization beyond the interface dissipation provided by the numerical flux in \eqref{eq:num_flux}, all simulations ran to completion for the entropy-stable schemes using multidimensional as well as tensor-product operators on triangles and tetrahedra. \Cref{fig:entropy} demonstrates that the entropy is nonincreasing for all time in each of such cases, as expected from \cref{thm:entropy_stability} for periodic boundary conditions. \par 
While the entropy-conservative simulations ran to completion for the TGV with $\mathrm{Ma=0.1}$, incurring a change in entropy close to machine precision, such computations crashed for the TGV with $\mathrm{Ma=0.7}$ as well as for the KHI due to negative densities or pressures. This reflects a well-known limitation of the entropy analysis, namely that entropy stability does not guarantee positivity of thermodynamic quantities. Although not provably positivity preserving, the entropy-stable schemes using the dissipative interface flux in \eqref{eq:num_flux} did not incur negative densities or pressures for any of the tests considered in this work, likely as a consequence of the dissipative term within the numerical flux serving to dampen any oscillations which would otherwise eventually lead to a violation of positivity. As a result, such schemes are highly robust even in the presence of substantially under-resolved solution features. These results are consistent with the observations in \cite{chan_entropy_projection_22}, where the authors demonstrated numerically that entropy-stable schemes which incorporate an entropy projection are often able to avoid negative densities or pressures for challenging under-resolved problems without the need for positivity-preserving limiters. We recognize, however, that such an approach may not be sufficient to preserve positivity for problems with discontinuities, and the extension of subcell limiting techniques such as those in \cite{rueda_ramirez_subcell_limiting_22,yamaleev_positivity_entropy_stable_23,lin_positivity_preservation_euler_23} to the proposed tensor-product discretizations on triangles and tetrahedra is an important topic of future research.

\section{Conclusions}\label{sec:conclusions}
We have developed discretizations of arbitrary order for nonlinear hyperbolic systems of conservation laws which combine the geometric flexibility of curved triangular and tetrahedral elements with the efficiency of tensor-product operators as well as the robustness of a provably entropy-stable formulation. 
The main components of our proposed methodology are outlined below.
\begin{itemize}
\item The numerical solution is represented using an orthonormal PKD polynomial basis on the reference triangle or tetrahedron, which supports the efficient matrix-free evaluation of polynomials at tensor-product quadrature nodes in collapsed coordinates through sum factorization.
\item The flux-differencing volume terms of the DSEM are computed using sparse tensor-product SBP operators in collapsed coordinates, significantly reducing the number of required entropy-conservative two-point flux evaluations relative to discretizations using multidimensional SBP operators based on symmetric quadrature rules, particularly at high polynomial degrees.
\item A weight-adjusted approximation is used to invert the dense local mass matrices arising from the use of curved elements, both in the evaluation of the time derivative and in the entropy projection, allowing for the spatial residual to be obtained in $\mathcal{O}(p^{d+1})$ floating-point operations, comparing favourably to the $\mathcal{O}(p^{2d})$ complexity of existing multidimensional formulations.
\item The discrete metric identities, which must hold to obtain an entropy-stable and free-stream preserving scheme, are satisfied by approximating the metric terms in conservative curl form.
\end{itemize}
This paper provides a comprehensive description of the mathematical and algorithmic aspects of the proposed tensor-product DSEMs on triangles and tetrahedra, discussing their formulation and efficient implementation as well as proving that they are conservative, free-stream preserving, and entropy stable by rewriting them in an equivalent formulation based on hybridized SBP operators. Furthermore, through the solution of the compressible Euler equations on curved triangular and tetrahedral meshes, we numerically verify that the proposed DSEMs are conservative as well as entropy conservative or dissipative (for discretizations with or without interface dissipation, respectively). We also verify the schemes' convergence properties for smooth solutions under $h$-refinement as well as $p$-refinement, providing numerical evidence that the proposed discretizations using tensor-product operators offer very similar accuracy to those using multidimensional SBP operators for such problems. Finally, we demonstrate the robustness of our approach in a suite of challenging under-resolved problems. Future work includes the extension of the proposed methodology to prismatic elements as well as to problems with diffusive terms (e.g.\ the Navier--Stokes equations) and those in non-conservative form (e.g.\ multiphase atmospheric flows). We are also interested in the development of positivity-preserving subcell limiters for the proposed schemes as well as in practical comparisons of accuracy, efficiency, and robustness relative to other entropy-stable DSEMs as well as to those using other techniques such as over-integration to achieve robustness in practice.

\section*{Acknowledgements}
This work was supported by the Natural Sciences and Engineering Research Council of Canada (NSERC), the Ontario Graduate Scholarship, and the University of Toronto. The numerical experiments were performed using the Niagara cluster at the SciNet HPC Consortium \cite{ponce_niagara_19}, which is funded by the Canada Foundation for Innovation, the Government of Ontario, the Ontario Research Fund -- Research Excellence, and the University of Toronto. A list of dependencies for \href{https://github.com/tristanmontoya/StableSpectralElements.jl}{\texttt{StableSpectralElements.jl}} can be found on the solver's GitHub page; we are especially grateful for the core functionality made possible through the use of the \href{https://github.com/jlchan/StartUpDG.jl}{\texttt{StartUpDG.jl}}, \href{https://github.com/JuliaLinearAlgebra/LinearMaps.jl}{\texttt{LinearMaps.jl}}, and \href{https://github.com/SciML/DifferentialEquations.jl}{\texttt{DifferentialEquations.jl}} packages.

\bibliographystyle{elsarticle-num} 
\bibliography{refs.bib}

\providecommand*\hyphen{-}
\begin{thebibliography}{10}
\expandafter\ifx\csname url\endcsname\relax
  \def\url#1{\texttt{#1}}\fi
\expandafter\ifx\csname urlprefix\endcsname\relax\def\urlprefix{URL }\fi
\expandafter\ifx\csname href\endcsname\relax
  \def\href#1#2{#2} \def\path#1{#1}\fi

\bibitem{klockner_dg_gpu_09}
A.~Kl\"ockner, T.~Warburton, J.~Bridge, J.~S. Hesthaven, Nodal discontinuous
  {Galerkin} methods on graphics processors, Journal of Computational Physics
  228~(21) (2009) 7863--7882.

\bibitem{abdi_gpu_cg_dg_17}
D.~S. Abdi, L.~C. Wilcox, T.~Warburton, F.~X. Giraldo, A {GPU}-accelerated
  continuous and discontinuous {Galerkin} non-hydrostatic atmospheric model,
  International Journal of High Performance Computing Applications 33~(1)
  (2017) 81--109.

\bibitem{vermeire_17}
B.~C. Vermeire, F.~D. Witherden, P.~E. Vincent, On the utility of {GPU}
  accelerated high-order methods for unsteady flow simulations: A comparison
  with industry-standard tools, Journal of Computational Physics 334 (2017)
  497--521.

\bibitem{parsani_ssdc_21}
M.~Parsani, R.~Boukharfane, I.~R. Nolasco, D.~C. Del Rey~Fern\'andez,
  S.~Zampini, B.~Hadri, L.~Dalcin, High-order accurate entropy-stable
  discontinuous collocated {Galerkin} methods with the summation-by-parts
  property for compressible {CFD} frameworks: Scalable {SSDC} algorithms and
  flow solver, Journal of Computational Physics 424 (2021) 109844.

\bibitem{mossier_p_adaptive_dg_22}
P.~Mossier, A.~Beck, C.-D. Munz, A $p$-adaptive discontinuous {Galerkin} method
  with $hp$-shock capturing, Journal of Scientific Computing 91~(1) (2022) 4.

\bibitem{tadmor_entropy_stable_fv_87}
E.~Tadmor, The numerical viscosity of entropy stable schemes for systems of
  conservation laws. {I}, Mathematics of Computation 49~(179) (1987) 91--103.

\bibitem{lefloch_entropy_conservative_arbitrary_order_02}
P.~G. LeFloch, J.~M. Mercier, C.~Rohde, Fully discrete, entropy conservative
  schemes of arbitrary order, {SIAM} Journal on Numerical Analysis 40~(5)
  (2002) 1968--1992.

\bibitem{fisher_phd_thesis_12}
T.~C. Fisher, High-order {$L^2$} stable multi-domain finite difference method
  for compressible flows, Ph.D. thesis, Purdue University (2012).

\bibitem{svard_nordstrom_sbpreview_14}
M.~Sv\"{a}rd, J.~Nordstr\"{o}m, Review of summation-by-parts schemes for
  initial-boundary-value problems, Journal of Computational Physics 268 (2014)
  17--38.

\bibitem{delrey_sbp_sat_review_14}
D.~C. Del Rey~Fern\'andez, J.~E. Hicken, D.~W. Zingg, Review of
  summation-by-parts operators with simultaneous approximation terms for the
  numerical solution of partial differential equations, Computers \& Fluids 95
  (2014) 171--196.

\bibitem{fisher_carpenter_ssweno_sbp_13}
T.~C. Fisher, M.~H. Carpenter, High-order entropy stable finite difference
  schemes for nonlinear conservation laws: Finite domains, Journal of
  Computational Physics 252 (2013) 518--557.

\bibitem{fisher_carpenter_telescopingflux_conservation_13}
T.~C. Fisher, M.~H. Carpenter, J.~Nordstr\"{o}m, N.~K. Yamaleev, C.~Swanson,
  Discretely conservative finite-difference formulations for nonlinear
  conservation laws in split form: Theory and boundary conditions, Journal of
  Computational Physics 234 (2013) 353--375.

\bibitem{ismail_roe_ec_flux_09}
F.~Ismail, P.~L. Roe, Affordable, entropy-consistent {Euler} flux functions
  {II}: Entropy production at shocks, Journal of Computational Physics 228~(15)
  (2009) 5410--5436.

\bibitem{roe81}
P.~L. Roe, Approximate {Riemann} solvers, parameter vectors, and difference
  schemes, Journal of Computational Physics 43 (1981) 357--372.

\bibitem{gassner_dgsem_sbp_13}
G.~J. Gassner, A skew-symmetric discontinuous {Galerkin} spectral element
  discretization and its relation to {SBP}-{SAT} finite difference methods,
  {SIAM} Journal on Scientific Computing 35~(3) (2013) A1233--A1253.

\bibitem{carpenter_entropystable_collocation_14}
M.~H. Carpenter, T.~C. Fisher, E.~J. Nielsen, S.~H. Frankel, Entropy stable
  spectral collocation schemes for the {Navier}--{Stokes} equations:
  Discontinuous interfaces, {SIAM} Journal on Scientific Computing 36~(5)
  (2014) B835--B867.

\bibitem{gassner_winters_kopriva_splitform_nodaldg_sbp_16}
G.~J. Gassner, A.~R. Winters, D.~A. Kopriva, Split form nodal discontinuous
  {Galerkin} schemes with summation-by-parts property for the compressible
  {Euler} equations, Journal of Computational Physics 327 (2016) 39--66.

\bibitem{barth_numerical_methods_gasdynamic_systems_99}
T.~J. Barth, Numerical methods for gasdynamic systems on unstructured meshes,
  in: D.~Kr{\"o}ner, M.~Ohlberger, C.~Rohde (Eds.), An Introduction to Recent
  Developments in Theory and Numerics for Conservation Laws, Springer, 1999,
  pp. 195--285.

\bibitem{hiltebrand_mishra_entropystable_dg_exact_int_14}
A.~Hiltebrand, S.~Mishra, Entropy stable shock capturing space-time
  discontinuous {Galerkin} schemes for systems of conservation laws, Numerische
  Mathematik 126~(1) (2014) 103--151.

\bibitem{hughes_franca_mallet_entropy_stable_fem_86}
T.~J.~R. Hughes, L.~P. Franca, M.~Mallet, A new finite element formulation for
  computational fluid dynamics: {I}. symmetric forms of the compressible
  {Euler} and {Navier}--{Stokes} equations and the second law of
  thermodynamics, Computer Methods in Applied Mechanics and Engineering 54~(2)
  (1986) 223--234.

\bibitem{hicken_mdsbp_16}
J.~E. Hicken, D.~C. Del Rey~Fern\'andez, D.~W. Zingg, Multidimensional
  summation-by-parts operators: General theory and application to simplex
  elements, {SIAM} Journal on Scientific Computing 38~(4) (2016) A1935--A1958.

\bibitem{delrey_generalized_framework_14}
D.~C. Del Rey~Fern{\'{a}}ndez, P.~D. Boom, D.~W. Zingg, A generalized framework
  for nodal first derivative summation-by-parts operators, Journal of
  Computational Physics 266 (2014) 214--239.

\bibitem{chen_shu_entropy_stable_dgsbp_17}
T.~Chen, C.-W. Shu, Entropy stable high order discontinuous {Galerkin} methods
  with suitable quadrature rules for hyperbolic conservation laws, Journal of
  Computational Physics 345 (2017) 427--461.

\bibitem{crean_entropystable_sbp_euler_curved_17}
J.~Crean, J.~E. Hicken, D.~C. {Del Rey Fern{\'{a}}ndez}, D.~W. Zingg, M.~H.
  Carpenter, Entropy-stable summation-by-parts discretization of the {Euler}
  equations on general curved elements, Journal of Computational Physics 356
  (2018) 410--438.

\bibitem{chan_discretely_entropy_conservative_dg_sbp_18}
J.~Chan, On discretely entropy conservative and entropy stable discontinuous
  {Galerkin} methods, Journal of Computational Physics 362 (2018) 346--374.

\bibitem{chen_shu_dgsbp_review_19}
T.~Chen, C.-W. Shu, Review of entropy stable discontinuous {Galerkin} methods
  for systems of conservation laws on unstructured simplex meshes, {CSIAM}
  Transactions on Applied Mathematics 1~(1) (2020) 1--52.

\bibitem{orszag_spectral_complex_geometries_80}
S.~A. Orszag, Spectral methods for problems in complex geometries, Journal of
  Computational Physics 37~(1) (1980) 70--92.

\bibitem{montoya_sem_23}
T.~Montoya, D.~W. Zingg, Efficient tensor-product spectral-element operators
  with the summation-by-parts property on curved triangles and tetrahedra,
  {SIAM} Journal on Scientific Computing 46~(4) (2024) A2270--A2297.

\bibitem{sherwin_karniadakis_triangular_sem_95}
S.~J. Sherwin, G.~E. Karniadakis, A triangular spectral element method:
  Applications to the incompressible {Navier}--{Stokes} equations, Computer
  Methods in Applied Mechanics and Engineering 123~(1-4) (1995) 189--229.

\bibitem{sherwin_karniadakis_tetrahedra_hp_fem_96}
S.~J. Sherwin, G.~E. Karniadakis, Tetrahedral $hp$ finite elements: Algorithms
  and flow simulations, Journal of Computational Physics 124~(1) (1996) 14--45.

\bibitem{lomtev_karniadakis_dg_99}
I.~Lomtev, G.~E. Karniadakis, A discontinuous {Galerkin} method for the
  {Navier}--{Stokes} equations, International Journal for Numerical Methods in
  Fluids 29~(5) (1999) 587--603.

\bibitem{kirby_spectral_hp_dg_hybrid_grids_00}
R.~M. Kirby, T.~C. Warburton, I.~Lomtev, G.~E. Karniadakis, A discontinuous
  {Galerkin} spectral/$hp$ method on hybrid grids, Applied Numerical
  Mathematics 33~(1-4) (2000) 393--405.

\bibitem{moxey_matrix_free_triangles_20}
D.~Moxey, R.~Amici, R.~M. Kirby, Efficient matrix-free high-order finite
  element evaluation for simplicial elements, {SIAM} Journal on Scientific
  Computing 42~(3) (2020) C97--C123.

\bibitem{proriol_polynomials_57}
J.~Proriol, Sur une famille de polynomes \`a deux variables orthogonaux dans un
  triangle, Comptes Rendus Hebdomadaires des S\'eances de l'Acad\'emie des
  Sciences 245 (1957) 2459--2461.

\bibitem{koornwinder_orthogonal_polynomials_75}
T.~Koornwinder, Two-variable analogues of the classical orthogonal polynomials,
  in: R.~Askey (Ed.), Theory and Application of Special Functions, Academic
  Press, 1975, pp. 435--495.

\bibitem{dubiner_spectral_triangle_91}
M.~Dubiner, Spectral methods on triangles and other domains, Journal of
  Scientific Computing 6~(4) (1991) 345--390.

\bibitem{chan_weight_adjusted_dg_curvilinear_17}
J.~Chan, R.~J. Hewett, T.~Warburton, Weight-adjusted discontinuous {Galerkin}
  methods: Curvilinear meshes, {SIAM} Journal on Scientific Computing 39~(6)
  (2017) A2395--A2421.

\bibitem{chan_wilcox_entropystable_curvilinear_19}
J.~Chan, L.~C. Wilcox, On discretely entropy stable weight-adjusted
  discontinuous {Galerkin} methods: Curvilinear meshes, Journal of
  Computational Physics 378 (2019) 366--393.

\bibitem{montoya_tensor_product_22}
T.~Montoya, D.~W. Zingg, Stable and conservative high-order methods on
  triangular elements using tensor-product summation-by-parts operators, in:
  Eleventh International Conference on Computational Fluid Dynamics, 2022.

\bibitem{montoya_unifying_21}
T.~Montoya, D.~W. Zingg, A unifying algebraic framework for discontinuous
  {Galerkin} and flux reconstruction methods based on the summation-by-parts
  property, Journal of Scientific Computing 92~(3) (2022) 87.

\bibitem{rueda_ramirez_subcell_limiting_22}
A.~M. Rueda-Ram\'irez, W.~Pazner, G.~J. Gassner, Subcell limiting strategies
  for discontinuous {Galerkin} spectral element methods, Computers {\&} Fluids
  247 (2022) 105627.

\bibitem{yamaleev_positivity_entropy_stable_23}
N.~K. Yamaleev, J.~Upperman, High-order positivity-preserving entropy stable
  schemes for the {3-D} compressible {Navier}--{Stokes} equations, Journal of
  Scientific Computing 95~(1) (2023) 11.

\bibitem{lin_positivity_preservation_euler_23}
Y.~Lin, J.~Chan, I.~Tomas, A positivity preserving strategy for entropy stable
  discontinuous {Galerkin} discretizations of the compressible {Euler} and
  {Navier}--{Stokes} equations, Journal of Computational Physics 475 (2023)
  111850.

\bibitem{friedrichs_lax_systems_conservation_71}
K.~O. Friedrichs, P.~D. Lax, Systems of conservation equations with a convex
  extension, Proceedings of the National Academy of Sciences 68~(8) (1971)
  1686--1688.

\bibitem{kruzhkov_first_order_quasilinear_70}
S.~N. Kru{\v{z}}kov, First order quasilinear equations in several independent
  variables, Mathematics of the {USSR}--Sbornik 10~(2) (1970) 217--243.

\bibitem{lax_shock_waves_and_entropy_71}
P.~D. Lax, Shock waves and entropy, in: Contributions to Nonlinear Functional
  Analysis, Academic Press, 1971, pp. 603--634.

\bibitem{dafermos16}
C.~M. Dafermos, Hyperbolic Conservation Laws in Continuum Physics, Springer,
  2016.

\bibitem{delrey_mdsbp_sat_18}
D.~C. Del Rey~Fern{\'{a}}ndez, J.~E. Hicken, D.~W. Zingg, Simultaneous
  approximation terms for multi-dimensional summation-by-parts operators,
  Journal of Scientific Computing 75~(1) (2018) 83--110.

\bibitem{hesthaven08}
J.~S. Hesthaven, T.~Warburton, Nodal Discontinuous {Galerkin} Methods:
  Algorithms, Analysis, and Applications, Springer, 2008.

\bibitem{karniadakis_sherwin_spectral_hp_element}
G.~E. Karniadakis, S.~J. Sherwin, Spectral/hp Element Methods for Computational
  Fluid Dynamics, 2nd Edition, Oxford University Press, 2005.

\bibitem{pulliam14}
T.~H. Pulliam, D.~W. Zingg, Fundamentals of Computational Fluid Dynamics,
  Springer, 2014.

\bibitem{kopriva09}
D.~A. Kopriva, Implementing Spectral Methods for Partial Differential
  Equations: Algorithms for Scientists and Engineers, Springer, 2009.

\bibitem{kopriva_metric_identities_discontinuous_sem_curved_06}
D.~A. Kopriva, Metric identities and the discontinuous spectral element method
  on curvilinear meshes, Journal of Scientific Computing 26~(3) (2006)
  301--327.

\bibitem{thomas_lombard_gcl_79}
P.~D. Thomas, C.~K. Lombard, Geometric conservation law and its application to
  flow computations on moving grids, {AIAA} Journal 17~(10) (1979) 1030--1037.

\bibitem{chan_delrey_carpenter_gauss_collocation_19}
J.~Chan, D.~C. Del Rey~Fern{\'{a}}ndez, M.~H. Carpenter, Efficient entropy
  stable {Gauss} collocation methods, {SIAM} Journal on Scientific Computing
  41~(5) (2019) A2938--A2966.

\bibitem{kopriva_gassner_dgsem_variable_coeff_14}
D.~A. Kopriva, G.~J. Gassner, An energy stable discontinuous {Galerkin}
  spectral element discretization for variable coefficient advection problems,
  {SIAM} Journal on Scientific Computing 36~(4) (2014) A2076--A2099.

\bibitem{delrey_extension_of_dense_gsbp_tensor_curvilinear_19}
D.~C. Del Rey~Fern{\'{a}}ndez, P.~D. Boom, M.~H. Carpenter, D.~W. Zingg,
  Extension of tensor-product generalized and dense-norm summation-by-parts
  operators to curvilinear coordinates, Journal of Scientific Computing 80
  (2019) 1957--1996.

\bibitem{ranocha_comparison_entropy_conservative_fluxes_18}
H.~Ranocha, Comparison of some entropy conservative numerical fluxes for the
  {Euler} equations, Journal of Scientific Computing 76~(1) (2018) 216--242.

\bibitem{winters_matrix_diss_17}
A.~R. Winters, D.~Derigs, G.~J. Gassner, S.~Walch, A uniquely defined entropy
  stable matrix dissipation operator for high {Mach} number ideal {MHD} and
  compressible {Euler} simulations, Journal of Computational Physics 332 (2017)
  274--289.

\bibitem{chandrashekar_kep_entropy_fv_13}
P.~Chandrashekar, Kinetic energy preserving and entropy stable finite volume
  schemes for compressible {Euler} and {Navier}--{Stokes} equations,
  Communications in Computational Physics 14~(5) (2013) 1252--1286.

\bibitem{ranocha_efficient_implementation_23}
H.~Ranocha, M.~Schlottke-Lakemper, J.~Chan, A.~M. Rueda-Ram{\'{\i}}rez, A.~R.
  Winters, F.~Hindenlang, G.~J. Gassner, Efficient implementation of modern
  entropy stable and kinetic energy preserving discontinuous {Galerkin} methods
  for conservation laws, {ACM} Transactions on Mathematical Software 49~(4)
  (2023) 37.

\bibitem{worku_quadrature_sbp_23}
Z.~A. Worku, J.~E. Hicken, D.~W. Zingg, Quadrature rules on triangles and
  tetrahedra for multidimensional summation-by-parts operators, Preprint,
  arXiv:2311.15576 [math.NA] (2023).

\bibitem{xiao_gimbutas_quadrature_10}
H.~Xiao, Z.~Gimbutas, A numerical algorithm for the construction of efficient
  quadrature rules in two and higher dimensions, Computers {\&} Mathematics
  with Applications 59~(2) (2010) 663--676.

\bibitem{jaskowiec_sukumar_symmetric_cubature_21}
J.~Ja{\'s}kowiec, N.~Sukumar, High-order symmetric cubature rules for
  tetrahedra and pyramids, International Journal for Numerical Methods in
  Engineering 122~(1) (2021) 148--171.

\bibitem{cockburn_hdg_09}
B.~Cockburn, J.~Gopalakrishnan, R.~Lazarov, Unified hybridization of
  discontinuous {Galerkin}, mixed, and continuous {Galerkin} methods for second
  order elliptic problems, SIAM Journal on Numerical Analysis 47~(2) (2009)
  1319--1365.

\bibitem{chan_skewsymmetric_modaldg_sbp_19}
J.~Chan, Skew-symmetric entropy stable modal discontinuous {Galerkin}
  formulations, Journal of Scientific Computing 81~(1) (2019) 459--485.

\bibitem{warburton_unstructured_connectivity_95}
T.~Warburton, S.~J. Sherwin, G.~E. Karniadakis, Unstructured $hp$/spectral
  elements: Connectivity and optimal ordering, in: S.~N. Atluri, G.~Yagawa,
  T.~Cruse (Eds.), Computational Mechanics '95: Theory and Applications,
  Springer, 1995, pp. 433--444.

\bibitem{chan_bencomo_delrey_mortar_based_entropy_stable_21}
J.~Chan, M.~J. Bencomo, D.~C. Del Rey~Fern\'andez, Mortar-based entropy-stable
  discontinuous {Galerkin} methods on non-conforming quadrilateral and
  hexahedral meshes, Journal of Scientific Computing 89~(2) (2021) 51.

\bibitem{harten_entropy_symmetric_form_83}
A.~Harten, On the symmetric form of systems of conservation laws with entropy,
  Journal of Computational Physics 49~(1) (1983) 151--164.

\bibitem{ranocha_phd_thesis_18}
H.~Ranocha, Generalised summation-by-parts operators and entropy stability of
  numerical methods for hyperbolic balance laws, Ph.D. thesis, Technische
  Universit\"at Braunschweig (2018).

\bibitem{ranocha_ec_kep_20}
H.~Ranocha, Entropy conserving and kinetic energy preserving numerical methods
  for the {Euler} equations using summation-by-parts operators, in: S.~J.
  Sherwin, D.~Moxey, J.~Peir{\'o}, P.~E. Vincent, C.~Schwab (Eds.), Spectral
  and High Order Methods for Partial Differential Equations ICOSAHOM 2018,
  Springer, 2020, pp. 525--535.

\bibitem{ranocha_pressure_oscillations_22}
H.~Ranocha, G.~J. Gassner, Preventing pressure oscillations does not fix local
  linear stability issues of entropy-based split-form high-order schemes,
  Communications on Applied Mathematics and Computation 4~(3) (2022) 880--903.

\bibitem{davis_godunov_88}
S.~F. Davis, Simplified second-order {Godunov}-type methods, {SIAM} Journal on
  Scientific and Statistical Computing 9~(3) (1988) 445--473.

\bibitem{chan_warburton_interpolation_nodes_15}
J.~Chan, T.~Warburton, A comparison of high order interpolation nodes for the
  pyramid, SIAM Journal on Scientific Computing 37~(5) (2015) A2151--A2170.

\bibitem{jiang_shu_efficient_implementation_weno_96}
G.-S. Jiang, C.-W. Shu, Efficient implementation of weighted {ENO} schemes,
  Journal of Computational Physics 126~(1) (1996) 202--228.

\bibitem{rackauckas_julia_diffeq_17}
C.~Rackauckas, Q.~Nie, {DifferentialEquations.jl}--a performant and
  feature-rich ecosystem for solving differential equations in {Julia}, Journal
  of Open Research Software 5~(1) (2017) 15.

\bibitem{hairer_93}
E.~Hairer, S.~P. N{\o}rsett, G.~Wanner, Solving Ordinary Differential Equations
  I. Nonstiff Problems, 2nd Edition, Springer, 1993.

\bibitem{pope_turbulent_flows}
S.~B. Pope, Turbulent Flows, Cambridge University Press, 2000.

\bibitem{rueda_ramirez_fv_limiter_21}
A.~M. Rueda-Ram{\'{\i}}rez, G.~J. Gassner, A subcell finite volume
  positivity-preserving limiter for {DGSEM} discretizations of the {Euler}
  equations, in: {WCCM}--{ECCOMAS} Congress, 2021.

\bibitem{chan_entropy_projection_22}
J.~Chan, H.~Ranocha, A.~M. Rueda-Ram{\'{\i}}rez, G.~J. Gassner, T.~Warburton,
  On the entropy projection and the robustness of high order entropy stable
  discontinuous {Galerkin} schemes for under-resolved flows, Frontiers in
  Physics 10 (2022) 898028.

\bibitem{pazner_persson_entropy_stable_line_dg_19}
W.~Pazner, P.-O. Persson, Analysis and entropy stability of the line-based
  discontinuous {Galerkin} method, Journal of Scientific Computing 80~(1)
  (2019) 376--402.

\bibitem{ponce_niagara_19}
M.~Ponce, R.~van Zon, S.~Northrup, D.~Gruner, J.~Chen, F.~Ertinaz, A.~Fedoseev,
  L.~Groer, F.~Mao, B.~C. Mundim, M.~Nolta, J.~Pinto, M.~Saldarriaga,
  V.~Slavnic, E.~Spence, C.-H. Yu, W.~R. Peltier, Deploying a top-100
  supercomputer for large parallel workloads, in: Proceedings of the Practice
  and Experience in Advanced Research Computing on Rise of the Machines
  (Learning), Association for Computing Machinery, 2019.

\end{thebibliography}

\end{document}